\documentclass[a4paper]{amsart}
\usepackage[utf8]{inputenc}
\usepackage[T1]{fontenc}
\usepackage{lmodern, enumerate}
\usepackage{amssymb,amsxtra}
\usepackage{amscd}
\usepackage[all]{xy}
\usepackage{xcolor}
\usepackage{nicefrac,mathtools}
\usepackage{microtype}
\usepackage{tikz-cd}
\usepackage{tkz-graph}
\usetikzlibrary{arrows}
\usepackage{comment}
\usetikzlibrary{positioning}
\usepackage[pdftitle={Free actions on separated graph C*-algebras},
 pdfauthor={Alcides Buss, Pere Ara, Ado Dalla Costa},
 pdfsubject={Mathematics}
]{hyperref}
\usepackage[lite]{amsrefs}
\renewcommand*{\MR}[1]{ \href{http://www.ams.org/mathscinet-getitem?mr=#1}{MR #1}}
\newcommand*{\arxiv}[1]{\href{http://www.arxiv.org/abs/#1}{arXiv: #1}}

\numberwithin{equation}{section}
\theoremstyle{plain}
\newtheorem{theorem}[equation]{Theorem}
\newtheorem{lemma}[equation]{Lemma}
\newtheorem{proposition}[equation]{Proposition}
\newtheorem{corollary}[equation]{Corollary}
\theoremstyle{definition}
\newtheorem{definition}[equation]{Definition}
\theoremstyle{remark}
\newtheorem{remark}[equation]{Remark}
\newtheorem{example}[equation]{Example}

\DeclareMathOperator{\cspn}{\overline{span}}

\DeclareMathOperator{\id}{\mathrm{id}}

\newcommand*{\K}{\mathcal K} %compactos
\newcommand*{\Hi}{\mathcal H} %hilbert space
\newcommand*{\A}{\mathcal A}%fibrados

\newcommand*{\J}{\mathcal J}
\newcommand*{\M}{\mathcal M} %multiplicadores

\newcommand*{\C}{\mathbb C}
\newcommand*{\Z}{\mathbb Z}

\newcommand*{\F}{\mathbb F}

\newcommand*{\f}{\mathfrak f}%free label

 % crossed product functor

\newcommand*{\nb}{\nobreakdash}
\newcommand*{\Star}{\(^*\)\nobreakdash-}

\newcommand{\Bound}{\mathbb B}
\newcommand{\Mat}{\mathbb M}

%adjointable operators on a Hilbert module

%\cstar{}algebra
\newcommand*{\cont}{C}%continuous functions
\newcommand*{\contz}{\cont_0}%continuous functions vanishing at infinity
%continuous functions with compact support
% special continuous functions with compact support
%continuous bounded functions

\newcommand*{\Id}{\textup{id}}%identity
%conjugation by a unitary

% unitaries
% Hilbert module
% Integral expectation
 % subspaces of B(G)... for construction of E-crossed products
%
%
%
%
%

\newcommand*{\CG}{\mbox{$C^*(G)$}} % C*-algebra do grupo

\newcommand*{\CE}{\mbox{$C^*(E)$}} % C*-algebra do grafo 
 % C*-algebra do grafo skew

\newcommand*{\CEC}{\mbox{$C^*(E,C)$}} % C*-algebra do grafo separado
\newcommand*{\CEGC}{\mbox{$C^*(E \times_c G, C \times_c G)$}}  % C*-algebra do grafo skew separado

\newcommand*{\CECR}{\mbox{$C_r^*(E,C)$}} % C*-algebra reduzida do grafo separado
\newcommand*{\CEGCR}{\mbox{$C_r^*(E \times_c G, C \times_c G)$}}  % C*-algebra reduzida do grafo skew separado

 % C*-algebra tame do grafo separado 

\addtolength{\textwidth}{8.67mm}

\newcommand*{\defeq}{\mathrel{\vcentcolon=}}% used for definitions
\newcommand*{\congto}{\xrightarrow\sim}

% the following three macros give the a bracket used for inner product, and double bracketed versions of them.

% range map
% source map
\newcommand*{\sbe}{\subseteq} % inclusion
\newcommand*{\Free}{\mathbb F}
\newcommand*{\Cuntz}{\mathcal O}
\newcommand*{\cstar}{\texorpdfstring{$C^*$\nobreakdash-\hspace{0pt}}{*-}}
\newcommand*{\into}{\hookrightarrow}
\newcommand*{\onto}{\twoheadrightarrow}
\newcommand*{\red}{r}
\renewcommand*{\max}{\mathrm{max}}

\newcommand*{\dual}[1]{\widehat{#1}}
 % special ideal in the crossed product Morita equivalent to fixed-point algebra
 % Centrum

%\newcommand*{\pt}[1]{\dot{#1}}

\newcommand{\spn}{\text{span}}
\newcommand{\spnfecho}{\overline{\text{span}}}
\newcommand{\Aut}{\text{Aut}}

\newcommand{\ol}{\overline}

\renewcommand{\P}{P}
\newcommand{\NN}{\mathcal{N}}
\newcommand{\LL}{\mathcal{L}}

\newcommand{\freeprod}[2]{\underset{#1}\bigstar{#2}}
\newcommand{\rfreeprod}[2]{\underset{#1,\red}\bigstar{#2}}

\newcommand{\idealin}{\mathrel{\triangleleft}} % relation of being an ideal

\newcommand{\fl}{\mathfrak f}

\begin{document}
\title[Free actions of groups on separated graph \cstar{}algebras]{Free actions of groups\\ on separated graph \cstar{}algebras}
\author{Pere Ara}  
\address[P. Ara]{Departament de Matem\`atiques, Edifici Cc, Universitat Aut\`onoma de Barcelona, 08193 Cerdanyola del Vall\`es (Barcelona), Spain, and}
\address{Centre de Recerca Matem\`atica, Edifici Cc, Campus de Bellaterra, 08193 Cerdanyola del Vall\`es (Barcelona), Spain.}
\email{para@mat.uab.cat}
\author{Alcides Buss}  
 \address[A. Buss]{Departamento de Matem\'atica\\
  Universidade Federal de Santa Catarina\\
  88.040-900 Florian\'opolis-SC\\
  Brazil}
\email{alcides.buss@ufsc.br}
\author{Ado Dalla Costa} 
\address[A.D. Costa]{Departamento de Matem\'atica\\
	Universidade Federal de Santa Catarina\\
	88.040-900 Florian\'opolis-SC\\
	Brazil}
\email{adodallacosta@hotmail.com}

\begin{abstract}
In this paper we study free actions of groups on separated graphs and their \cstar{}algebras, generalizing previous results involving ordinary (directed) graphs.

We prove a version of the Gross-Tucker Theorem for separated graphs yielding a characterization of free actions on separated graphs via a skew product of the (orbit) separated graph by a group labeling function. Moreover, we describe the \cstar{}algebras associated to these skew products as crossed products by certain coactions coming from the labeling function on the graph. Our results deal with both the full and the reduced \cstar{}algebras of separated graphs.

To prove our main results we use several techniques that involve certain canonical conditional expectations defined on the \cstar{}algebras of separated graphs and their structure as amalgamated free products of ordinary graph \cstar{}algebras. Moreover, we describe Fell bundles associated with the coactions of the appearing labeling functions. As a byproduct of our results, we deduce that the \cstar{}algebras of separated graphs always have a canonical Fell bundle structure over the free group on their edges.
\end{abstract}

\subjclass[2010]{46L55, 22D35}

\thanks{This work has been supported by CNPq/Humboldt-CAPES--Brazil. The first named author was partially supported by DGI-MINECO-FEDER grant PID2020-113047GB-I00, and the Spanish State Research Agency, through the Severo Ochoa and Mar\'ia de Maeztu Program for Centers and Units of Excellence in R$\&$D (CEX2020-001084-M)}

\keywords{Separated graphs, free actions, coactions, Fell bundles, duality}

\maketitle

\tableofcontents

\section{Introduction}\label{cap0}
Graph \cstar{}algebras provide an important class of \cstar{}algebras that have been used both as a tool to study large classes of \cstar{}algebras supplying models for the classification theory and as a source of an inexhaustible supply of examples and counterexamples for many other areas of mathematics. The first appearance of this type of algebra was developed by Leavitt in \cite{Leavitt:Module_type}, who gives us a construction of a class of algebras denoted by $L_{\mathbb{K}}(m,n)$ over an arbitrary field $\mathbb{K}$ for integers $1\leq m \leq n$. The algebras $L_{\mathbb{K}}(m,n)$ are universal $(m,n)$-paradoxical algebras, in the sense that there exists a universal isomorphism between the free module of rank $m$ and the free module of rank $n$. In 1979, Cuntz independently constructed the Cuntz algebra $\mathcal{O}_n$  \cite{Cuntz:Simple_isometries}, which is the universal \cstar{}algebra generated by $n$ isometries $S_1,\ldots,S_n$ such that $S_i^*S_j = \delta_{i,j}$ and $\sum_{i=1}^n S_iS_i^* = 1$. It is the most basic example of a graph \cstar{}algebra and it is the subject of many studies in several areas. We point out that no explicit connections with graph theory were developed in these pioneering articles. Some years later, in \cite{Cuntz-Krieger:topological_Markov_chains}, Cuntz and Krieger generalized the Cuntz algebras to accomplish classes of \cstar{}algebras associated to more general Cuntz-type relations involving finite square matrices with entries in $\{0,1\}$. Subsequently, the \cstar{}algebra $C^*(E)$ associated with a directed graph $E$ was defined, and it was realized that these graph \cstar{}algebras provide direct generalizations of Cuntz algebras and Cuntz-Krieger algebras, see \cite{Kumjian-Pask-Raeburn-Renault:Graphs} and  \cite{Kumjian-Pask-Raeburn:Cuntz-Krieger_graphs}.  

Years later a generalization of graph \cstar{}algebras was presented by the first-named author and K.R. Goodearl (see \cite{Ara-Goodearl:C-algebras_separated_graphs} and \cite{Ara-Goodearl:Leavitt_path}). These  \cstar{}algebras are based on the concept of a separated graph, which consists of a pair $(E,C)$ where $E$ is a directed graph and $C$ is a partition of the set of edges of $E$ which refines the partition determined by the source function $s\colon E^1\to E^0$. If one takes as $C$ the partition of $E^1$ determined by the sets $ s^{-1}(v)$, where $v$ is a non-sink vertex of $E$, the \cstar{}algebra $\CEC$ coincides with $C^*(E)$ as expected. This concept of separated graph \cstar{}algebras is also related to the \cstar{}algebras of edge-colored graphs introduced by Duncan in \cite{Duncan:Certain_free}.

The main motivations to study these general classes of \cstar{}algebras come from the fact that they allow for a more complicated ideal structure as well as more interesting K\nb-theory groups than those associated to ordinary graphs. Moreover, the \cstar{}algebras $\CEC$ of separated graphs are usually not nuclear and have a more ``wild'' behaviour than $\CE$. Indeed, $\CEC$ can be viewed as an amalgamated free product of \cstar{}algebras $C^*(E_X)$ for the subgraphs $E_X\sbe E$ coming from different sets $X$ in the partition $C$. One can then also define the reduced versions $\CECR$ as the corresponding reduced amalgamated free products with respect to certain canonical conditional expectations $C^*(E_X)\onto \contz(E^0)$. These reduced $C^*$-algebras $\CECR$ have been defined so far only for {\it finitely separated} graphs $(E,C)$. All this makes the study of separated graph \cstar{}algebras much more interesting, and a lot of problems are still open. Furthermore, the amalgamated free product structure of these algebras makes it clear that, in general, a product of partial isometries coming from the different pieces $E_X$ is not a partial isometry anymore. In addition, separated graph algebras are a key ingredient in the solution of important problems outside the theory of graph algebras. For instance, they have been used in \cite{Ara-Exel:Dynamical_systems} to show that Tarski's dichotomy (\cite[Section 11.1]{TW}) does not hold in topological dynamics, and in \cite{ABP2020} to provide a solution to the Realization Problem for finitely generated refinement monoids.       

The main goal of this paper is to study free actions of groups on separated graphs and on their associated \cstar{}algebras.  
The starting point is the Gross-Tucker Theorem characterizing graphs carrying free actions of groups, proved for ordinary graphs by Gross-Tucker in \cite[Theorem 2.2.2]{Gross-Tucker:topolgical-graph-theory}. Our first main result extends this to separated graphs:
\begin{theorem}[Gross-Tucker for separated graphs]\label{GT-intro}
Let $(F,D)$ be a separated graph endowed with a free action of $G$. Then $(F,D)$ is $G$-equivariantly isomorphic to a skew product separated graph of the form
$$(F,D)\cong (E\times_c G,C\times_c G)$$
for some (labeling) function $c\colon E^1\to G$. Here $G$ acts on $E\times_c G$ by acting trivially on $E$ and by translations on $G$.
\end{theorem}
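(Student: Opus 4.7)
The plan is to mimic the classical Gross-Tucker argument by forming the quotient separated graph, choosing a vertex transversal, and reading off a labeling function from the source/range data of edge representatives. First I would define $(E,C) := (F,D)/G$ by setting $E^0 := F^0/G$ and $E^1 := F^1/G$ with source and range induced from $F$ (well-defined since $G$ acts by graph automorphisms), and taking $C := \{[X] : X \in D\}$. Since $G$ acts on the separated graph $(F,D)$, it permutes the sets of $D$, so $C$ is a well-defined partition of $E^1$; because $D$ refines the source partition on $F^1$, the induced partition $C$ refines the source partition on $E^1$, so $(E,C)$ is indeed a separated graph.

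Next I would pick a set-theoretic section $\sigma : E^0 \to F^0$ of the orbit map. By freeness of the action on vertices, every $v \in F^0$ admits a unique decomposition $v = g_v \cdot \sigma([v])$ with $g_v \in G$. For each orbit $[f] \in E^1$ I choose the unique representative $\tilde f \in F^1$ with $s(\tilde f) \in \sigma(E^0)$ (existence and uniqueness following from freeness applied at $s(f)$), and define the labeling $c : E^1 \to G$ by the equation $r(\tilde f) = c([f]) \cdot \sigma([r(\tilde f)])$.

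Then I would define $\Phi : F \to E \times_c G$ by $\Phi(v) := ([v], g_v)$ on vertices and $\Phi(f) := ([f], g_{s(f)})$ on edges. A short calculation, using $f = g_{s(f)} \cdot \tilde f$, shows that $\Phi$ intertwines the source and range maps (the range compatibility being precisely the motivation for the definition of $c$), that $\Phi$ is $G$-equivariant when $G$ acts trivially on the first factor and by left translation on the group coordinate, and that $\Phi$ is bijective on both vertices and edges by freeness. To finish, I would verify that $\Phi$ sends blocks of $D$ to blocks of $C \times_c G$: fixing $X \in D$ with common source $v$, every $f \in X$ shares $g_{s(f)} = g_v$, hence $\Phi(X) = [X] \times \{g_v\}$, which is a typical block of $C \times_c G$ (defined as $\{Y \times \{g\} : Y \in C,\, g \in G\}$), and every such block arises this way via the $G$-equivariance.

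I expect the routine parts to be the bookkeeping verifications that $\Phi$ is a well-defined bijection intertwining source, range, and the $G$-action. The main conceptual obstacle is ensuring that the partition $D$ descends cleanly and matches the skew-product partition: this requires (i) that the $G$-action permutes blocks of $D$, which must be built into the notion of a $G$-action on a separated graph, and (ii) that two distinct blocks $X, X' \in D$ lying in the same $G$-orbit are related by a unique group element, which is guaranteed by freeness at the level of sources. Both points reduce to combining freeness with the refinement condition $D \succeq s^{-1}$, so the proof should go through without further serious difficulty.
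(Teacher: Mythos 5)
Your proposal is correct, but it is organized differently from the paper's proof. The paper treats the classical Gross--Tucker theorem for ordinary directed graphs (\cite[Theorem 2.2.2]{Gross-Tucker:topolgical-graph-theory}) as a black box: it takes the labeling $c$ and the $G$-equivariant graph isomorphism $\phi\colon E/G\times_c G\to E$ as given, and the entire content of the paper's argument is the single verification that $\phi$ carries the blocks $X_G\times\{g\}$ of the skew-product separation onto the blocks $\alpha_g(X)$ of the original separation. You instead re-derive the classical theorem from scratch: choosing a vertex transversal $\sigma$, using freeness to write $v=g_v\cdot\sigma([v])$, normalizing edge representatives so their sources lie in the transversal, and reading off $c$ from the range data. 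This buys a self-contained proof with completely explicit formulas for $c$ and for the isomorphism $\Phi$ (your range computation $g_{r(f)}=g_{s(f)}c([f])$ is exactly the point where the skew-product range convention is matched, and it checks out), at the cost of redoing bookkeeping the paper delegates to the literature. The genuinely new step relative to the unseparated case --- that blocks of $D$ with common source all carry the same group coordinate $g_v$ and hence map onto blocks $[X]\times\{g_v\}$, and that all blocks of $C\times_c G$ arise this way by equivariance --- is handled in essentially the same way in both arguments (yours goes from $(F,D)$ to the skew product, the paper's goes in the reverse direction, which is immaterial). Your two flagged points, that the action permutes blocks of $D$ (built into the definition of an action on a separated graph) and that the quotient separation is a genuine partition refining the source partition, are indeed the only places where care is needed, and both hold as you indicate.
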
 
The graph $E$ can be canonically identified with the quotient graph $F/G$ by taking orbits of the group action and similarly $C\cong D/G$. By this result, the problem of studying free actions of groups on separated graphs turns into the equivalent problem of studying skew product separated graphs $(E\times_c G, C\times_c G)$. 

The first basic question one can address in this context is to determine the exact relationship between $C^*(E\times_c G,C\times_c G)$ and $C^*(E,C)$. 
This was first answered by Kumjian-Pask \cite{Kumjian-Pask:C-algebras_directed_graphs} in case of ordinary graphs and for an abelian group $G$, in which case it is proved that the labeling $c\colon E^1\to G$ induces a (strongly continuous) action $\beta_c$ of the dual (compact) group $\dual G$ on $C^*(E)$ in such a way that
$C^*(E\times_c G)\cong C^*(E)\rtimes_{\beta_c} \dual G.$
Shortly thereafter, in \cite[Theorem 2.4]{Kaliszewski-Quigg-Raeburn:Skew_products}, the commutativity  assumption on $G$ was removed, replacing the action $\beta_c$ of $\dual G$ by a coaction $\delta_c$ of $G$ on $C^*(E)$, yielding a generalized form of the above isomorphism in terms of crossed products by coactions:
$C^*(E\times_c G)\cong C^*(E)\rtimes_{\delta_c}G$.
We extend this result to separated graphs as follows:
\begin{theorem}\label{theo:coactions-separated-intro}
Given a separated graph $(E,C)$ and a labeling function $c\colon E^1 \to G$, there is a coaction $\delta_c$ of $G$ on $C^*(E,C)$ acting on the partial isometry generators $S_e\in C^*(E,C)$ by $\delta_c(S_e)=S_e\otimes c(e)$ and such that there is a canonical isomorphism
$$C^*(E,C)\rtimes_{\delta_c}G\cong C^*(E\times_c G,C\times_c G).$$
Moreover, if in addition $(E,C)$ is a finitely separated graph, then $\delta_c$ factors through a coaction $\delta_c^r$ of $G$ on $C^*_r(E,C)$ in such a way that
$$C^*_r(E,C)\rtimes_{\delta_c^r}G\cong C^*_r(E\times_c G,C\times_c G).$$
\end{theorem}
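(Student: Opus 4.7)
The plan is to construct $\delta_c$ via the universal property of $C^*(E,C)$ and then establish the isomorphism by decomposing both sides as amalgamated free products and invoking the Kaliszewski-Quigg-Raeburn result \cite[Theorem~2.4]{Kaliszewski-Quigg-Raeburn:Skew_products} for ordinary graphs on each factor. First, I would define $\delta_c$ on generators by $\delta_c(p_v) \defeq p_v \otimes 1$ and $\delta_c(S_e) \defeq S_e \otimes u_{c(e)}$, where $u_g \in M(C^*(G))$ denotes the canonical unitary associated with $g \in G$. A direct check shows that the defining relations of $(E,C)$ are preserved: for $e,f$ in the same $X \in C$,
\[(S_e \otimes u_{c(e)})^*(S_f \otimes u_{c(f)}) = S_e^* S_f \otimes u_{c(e)}^* u_{c(f)} = \delta_{e,f}\, p_{s(e)} \otimes 1,\]
and the completeness relations for finite $X$ are preserved because the $u_{c(e)}$ cancel out. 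The universal property of $C^*(E,C)$ thus yields a $*$\nobreakdash-homomorphism $\delta_c \colon C^*(E,C) \to M(C^*(E,C) \otimes C^*(G))$, and the coaction identity $(\delta_c \otimes \id) \circ \delta_c = (\id \otimes \delta_G) \circ \delta_c$ is immediate on generators since $\delta_G(u_g) = u_g \otimes u_g$.

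For the full isomorphism, the partition $C \times_c G$ consists of sets $X_g \defeq \{(e,g) : e \in X\}$ indexed by pairs $(X,g) \in C \times G$ with source $(s(X),g)$, and the associated subgraphs $(E \times_c G)_{X_g}$ are ordinary graphs, so
\[C^*(E \times_c G, C \times_c G) \cong \bigstar_{C_0((E \times_c G)^0)} C^*\bigl((E \times_c G)_{X_g}\bigr),\]
with each factor canonically isomorphic to $C^*(E_X) \rtimes_{\delta_X} G$ via \cite[Theorem~2.4]{Kaliszewski-Quigg-Raeburn:Skew_products}, where $\delta_X \defeq \delta_c|_{C^*(E_X)}$. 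Meanwhile, full crossed products by coactions preserve colimits and therefore commute with amalgamated free products over a coaction\nobreakdash-invariant subalgebra — here $C_0(E^0)$, which carries the trivial coaction — giving
\[C^*(E,C) \rtimes_{\delta_c} G \cong \Bigl(\bigstar_{C_0(E^0)} C^*(E_X)\Bigr) \rtimes_{\delta_c} G \cong \bigstar_{C_0((E\times_c G)^0)} \bigl(C^*(E_X) \rtimes_{\delta_X} G\bigr),\]
matching the previous decomposition and concluding the full case.

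For the reduced version, assuming $(E,C)$ is finitely separated, I would first show that $\delta_c$ descends to a coaction $\delta_c^r$ on $C^*_r(E,C)$. Since the latter is by definition the reduced amalgamated free product of the $C^*(E_X)$ relative to canonical conditional expectations $\varphi_X \colon C^*(E_X) \to C_0(E^0)$, it suffices to check that each $\delta_X$ is compatible with $\varphi_X$; this follows because $\varphi_X$ is the projection onto the neutral component of the $G$\nobreakdash-grading of $C^*(E_X)$ induced by $c$. Then I would mimic the full-case argument in the reduced setting. The principal obstacle lies here: unlike the full case, reduced crossed products by coactions are \emph{not} automatic colimits, so their compatibility with reduced amalgamated free products must be verified directly. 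The natural route, anticipated by the introduction, is to repackage $\delta_c^r$ as a Fell bundle over $G$ and identify both the reduced crossed product and the reduced amalgamated free product with its reduced cross\nobreakdash-sectional $C^*$\nobreakdash-algebra, using the compatibility of the amalgamating conditional expectations with the Fell bundle structure.
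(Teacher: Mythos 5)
Your construction of $\delta_c$ is correct and coincides with the paper's (Theorem \ref{isomordeltamaximal}). For the full isomorphism your route via the amalgamated free product decomposition is genuinely different from the paper's, which instead builds an explicit Cuntz--Krieger $(E\times_c G,C\times_c G)$-family $p_{(v,g)}=j_B(P_v)j_G(\chi_{g^{-1}})$, $s_{(e,g)}=j_B(S_e)j_G(\chi_{(gc(e))^{-1}})$ inside the crossed product together with a covariant pair in the other direction, and checks the integrated maps are mutually inverse (Theorem \ref{isomorgrafosskewseparproducruzado}). Your route could be made to work, but as written it has two gaps. First, the factor of $C^*(E\times_c G,C\times_c G)$ indexed by a single $X_g=X\times\{g\}$ is just the graph algebra of a graph with edge set $X\times\{g\}$; it is \emph{not} $C^*(E_X)\rtimes_{\delta_X}G$. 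You must first regroup the factors over $g\in G$ to recover $C^*(E_X\times_c G)$ (Proposition \ref{propcoproduto} applied to the trivially separated graph $E_X\times_c G$) before invoking Kaliszewski--Quigg--Raeburn. Second, the assertion that full crossed products by coactions ``preserve colimits'' and hence commute with amalgamated free products is precisely the nontrivial content you need; it is not a standard citable fact (the universal property of $A\rtimes_\delta G$ is phrased in terms of covariant pairs, so the functor is not obviously a left adjoint) and would have to be proved by matching universal properties factor by factor, an argument of comparable length to the paper's direct one.

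The reduced half is where the proposal really falls short. Your justification that $\delta_X$ is compatible with $\varphi_X$ because ``$\varphi_X$ is the projection onto the neutral component of the $G$-grading'' is false: that neutral component contains every $S_\mu S_\nu^*$ with $c(\mu)=c(\nu)$ (for the trivial label it is all of $C^*(E_X)$), which is much larger than $C_0(E^0)$. More importantly, factorwise compatibility does not by itself give what is actually needed, namely that the free-product conditional expectation $P\colon C^*(E,C)\to C_0(E^0)$ satisfies $(P\otimes\id)\circ\delta_c=\delta_c\circ P$, equivalently that $P$ vanishes on every spectral subspace $C^*(E,C)_g$ with $g\neq 1$. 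A general element of such a subspace is not an alternating product of elements of the kernels of the $\varphi_X$, so this does not follow formally from the freeness condition \eqref{eq:free-condition-exp}; it is the content of the paper's Lemma \ref{lem:formula-for-P}, proved by an explicit induction on $C$-separated weakly reduced paths, and it is also exactly the hypothesis needed to make your proposed Fell-bundle identification of the reduced crossed product with the reduced amalgamated free product work (Proposition \ref{prop:Fell-bundle-quotient-conditional}). Finally, to descend the isomorphism you must still check that it intertwines the canonical conditional expectations on the two sides, which requires the invariance $N_{(\varsigma,g)}=N_{\varsigma}$ of the coefficients (cf.\ the proof of Theorem \ref{isomorfismo2}); none of this is addressed in your sketch.
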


Moreover, we prove that the coaction $\delta_c$ on $C^*(E,C)$ is \emph{maximal}\footnote{See Section~\ref{sec:Preliminaries} below for the terminology used here.} and that its dual action on $C^*(E,C)\rtimes_{\delta_c}G$ corresponds under the above isomorphism to the action $\alpha$ on $C^*(E\times_c G,C\times_c G)$ induced by the free action on $(E\times_c G,C\times_c G)$ appearing in the Gross-Tucker Theorem. Similarly, the coaction $\delta_c^r$ on $C^*_r(E,C)$ is \emph{normal} and its dual coaction on $C^*_r(E,C)\rtimes_{\delta_c^r}G$ corresponds to a $G$-action $\alpha_r$ on $C^*_r(E\times_c G,C\times_c G)$ which factors the $G$-action $\alpha$ on $C^*(E\times_c G,C\times_c G)$.

Using the Gross-Tucker Theorem~\ref{GT-intro} and duality for crossed products by coactions, Theorem~\ref{theo:coactions-separated-intro} yields a `dual' result that describes crossed products by free actions on separated graph \cstar{}algebras:

\begin{theorem}\label{theo:coactions-separated-intro-1}
For a free action of a group $G$ on a separated graph $(F,D)$, there is an induced $G$-action $\alpha$ on $C^*(F,D)$ and a canonical isomorphism
$$C^*(F,D)\rtimes_{\alpha}G\cong C^*(F/G,D/G)\otimes\K(\ell^2G).$$
Moreover, if $(F,D)$ is finitely separated, then $\alpha$ factors through a $G$-action $\alpha_r$ on $C^*_r(F,D)$ and the above isomorphism factors through
$$C^*_r(F,D)\rtimes_{\alpha_r,r}G\cong C^*_r(F/G,D/G)\otimes\K(\ell^2G).$$
\end{theorem}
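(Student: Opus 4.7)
The proof is a formal consequence of Theorems~\ref{GT-intro} and~\ref{theo:coactions-separated-intro} together with crossed-product duality for coactions. By Theorem~\ref{GT-intro}, we may $G$-equivariantly identify $(F,D)\cong (E\times_c G, C\times_c G)$ with $E=F/G$, $C=D/G$, and some labeling function $c\colon E^1\to G$; under this identification the free $G$-action on $(F,D)$ becomes translation on the $G$-coordinate, so the induced action $\alpha$ on $C^*(F,D)$ is transported to the corresponding action on $C^*(E\times_c G, C\times_c G)$. Theorem~\ref{theo:coactions-separated-intro} (and the accompanying discussion, which pins down the correspondence of the two actions) then supplies a $G$-equivariant isomorphism
\begin{equation*}
C^*(E\times_c G, C\times_c G)\;\cong\; C^*(E,C)\rtimes_{\delta_c}G,
\end{equation*}
sending $\alpha$ to the dual action $\widehat{\delta_c}$.

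Since $\delta_c$ is maximal, Katayama-type duality for maximal coactions yields the canonical isomorphism
\begin{equation*}
\bigl(C^*(E,C)\rtimes_{\delta_c}G\bigr)\rtimes_{\widehat{\delta_c}}G\;\cong\; C^*(E,C)\otimes\K(\ell^2 G).
\end{equation*}
Composing this with the two identifications above gives $C^*(F,D)\rtimes_\alpha G\cong C^*(F/G,D/G)\otimes\K(\ell^2 G)$, which is the first assertion. The finitely separated case proceeds identically, with $(\delta_c,\alpha,C^*)$ replaced by their normal/reduced counterparts $(\delta_c^r,\alpha_r,C^*_r)$; normality of $\delta_c^r$ then forces the outer crossed product in the duality statement to be the reduced one, giving
\begin{equation*}
\bigl(C^*_r(E,C)\rtimes_{\delta_c^r}G\bigr)\rtimes_{\widehat{\delta_c^r},r}G\;\cong\; C^*_r(E,C)\otimes\K(\ell^2 G),
\end{equation*}
from which the second isomorphism follows.

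I do not expect any substantive obstacle here, since the real work has been absorbed into Theorems~\ref{GT-intro} and~\ref{theo:coactions-separated-intro}: those results already supply the maximality/normality of the coaction and the $G$-equivariant matching of the dual action with $\alpha$ (respectively $\alpha_r$). The remaining care is essentially bookkeeping---selecting the correct flavor of coaction duality (full outer crossed product for the maximal coaction $\delta_c$, reduced outer crossed product for the normal coaction $\delta_c^r$) and tracking equivariance through the chain of isomorphisms so that the final identification holds as crossed products and not merely as \Cst-algebras.
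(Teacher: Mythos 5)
Your proposal is correct and follows essentially the same route as the paper: Gross--Tucker (Theorem~\ref{theo:Gross-Tucker-Separated}) reduces to the skew-product case, the equivariant isomorphisms of Theorems~\ref{isomorgrafosskewseparproducruzado} and~\ref{isomorfismo2} identify $\alpha$ (resp.\ $\alpha_r$) with the dual action, and maximality of $\delta_c$ (resp.\ normality of $\delta_c^r$) supplies the Katayama-type duality isomorphism, exactly as in Corollaries~\ref{corolariografosskewseparados}, \ref{corolariografosskewseparados1}, \ref{isomorfismo1} and~\ref{isomorfismo3}. No gaps.
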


The case of separated graphs is much more interesting and intriguing because it brings into play non-amenable phenomena that do not appear in ordinary graphs. The main point in the ordinary graph case is that the \cstar{}algebra $C^*(E)$ attached to a graph $E$ is always nuclear. This implies that all coactions of a discrete group on such a \cstar{}algebra are both maximal and normal. Furthermore, a free action of $G$ on a graph $F$ always induces an amenable action on $C^*(F)$ in the sense of Anantharaman-Delaroche \cite{Anantharaman-Delaroche:Systemes} because the crossed product $C^*(F)\rtimes_\alpha G\cong C^*(F/G)\otimes \K(\ell^2G)$ is Morita equivalent to a graph \cstar{}algebra, hence also nuclear.

All this no longer happens for separated graphs: the maximal coaction $\delta_c$ on $C^*(E,C)$ induced from a labeling function is not necessarily normal, 
and the normal coaction $\delta_c^r$ on $C^*_r(E,C)$ is not necessarily maximal. Even more interesting, the coaction $\delta_c^r$ does not need to be the normalization of $\delta_c$. This also means that the actions $\alpha$ on $C^*(F,D)$ or its reduced companion $\alpha_r$ on $C^*_r(F,D)$ induced by a free action of $G$
on $(F,D)$ need not be amenable in general.

In order to better understand the relation between the coaction $\delta_c$ on $C^*(E,C)$ and its quotient coaction $\delta_c^r$ on $C^*_r(E,C)$, we describe their spectral decompositions, that is, the associated $G$-grading given by the subspaces
$$A_g=C^*(E,C)_g:=\{a\in C^*(E,C):\delta_c(a)=a\otimes g\}$$
for a fixed $g\in G$. We prove that $A_g$ is the closed linear span of elements of the form
$$S_{\mu_1}S_{\nu_1}^* \ldots S_{\mu_n}S_{\nu_n}^*$$ 
for certain reduced paths of the form $\varsigma=\mu_1\nu_1^*\ldots \mu_n\nu_n^*$ with $c(\varsigma)=g$, where $c$ is extended to paths in the canonical way. When endowed with the algebraic structure of $C^*(E,C)$, this yields a Fell bundle $\A=\{A_g\}_{g\in G}$ over $G$. And since the coaction $\delta_c$ is maximal, we have a canonical isomorphism $C^*(E,C)\cong C^*(\A)$, the cross-sectional \cstar{}algebra of $\A$. Similarly, the coaction $\delta_c^r$ yields a Fell bundle $\A^r=\{A^r_g\}_{g\in G}$, where $A^r_g=C^*_r(E,C)_g$ are the spectral subspaces of $\delta_c^r$ which can be described from reduced paths as above. And since $\delta_c^r$ is normal we have $C^*_r(E,C)\cong C^*_r(\A^r)$. The fact that the coaction $\delta_c^r$ is, in general, not the normalization of $\delta_c$ is related to the fact that $\A^r$ is a proper quotient Fell bundle of $\A$. We describe this quotient in terms of the canonical conditional expectation $P\colon C^*(E,C)\to C_0(E^0)$ and its nucleus $\NN_P:=\{a\in C^*(E,C): P(a^*a)=0\}$, which gives a closed two-sided ideal of $C^*(E,C)$ realizing $C^*_r(E,C)\cong C^*(E,C)/\NN_P$. All this can also be described on the level of the Fell bundles $\A$ and $\A^r$ as follows:

\begin{theorem}\label{Fell-bundle-quotient-intro}
The nucleus ideal $\NN_P$ gives a Fell bundle ideal $\J=(A_g\cap \NN_P)_{g\in G}$ of $\A$ whose quotient Fell bundle $\A/\J$ is canonically isomorphic to $\A^r$.
In particular, $$C^*_r(E,C)\cong C^*_r(\A/\J).$$
\end{theorem}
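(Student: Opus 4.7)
My plan is to deduce the theorem by transferring the quotient coaction structure from $\CEC \onto \CECR$ down to the level of Fell bundles. The crucial input, already established earlier in the paper, is that $\delta_c^r$ is obtained as the descent of $\delta_c$ through the quotient map $q\colon \CEC \onto \CEC/\NN_P \cong \CECR$; that is, $q$ is $G$-equivariant, $(q \otimes \id) \circ \delta_c = \delta_c^r \circ q$.

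Step one is to observe that $\NN_P = \ker q$ is $\delta_c$-invariant and hence $G$-graded, with graded pieces exactly $\J_g = \NN_P \cap A_g$. This is the standard correspondence (for discrete $G$) between $\delta_c$-invariant ideals of $\CEC$ and Fell bundle ideals of $\A$: $\delta_c$-invariance of $\NN_P$ is immediate from equivariance of $q$, and the decomposition $\NN_P = \overline{\bigoplus_g \J_g}$ follows by applying the spectral projections of the (discrete) coaction to elements of $\NN_P$. The Fell bundle ideal axioms $\J_g^* = \J_{g^{-1}}$, $\J_g \cdot A_h \sbe \J_{gh}$, and $A_h \cdot \J_g \sbe \J_{hg}$ then follow mechanically from the fact that $\NN_P$ is a closed, self-adjoint, two-sided ideal of $\CEC$.

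Step two is to identify the quotient bundle $\A/\J$ with $\A^r$ fiberwise. The restriction $q_g \defeq q|_{A_g}$ maps into the spectral subspace $A^r_g$ by equivariance, with kernel $A_g \cap \NN_P = \J_g$ by definition. Surjectivity of $q_g$ follows from the same spectral projection argument: given $b \in A^r_g$ and a lift $a \in \CEC$ with $q(a) = b$, applying the $g$-th spectral projection commutes with $q$, so the image of the spectral component of $a$ at $g$ is exactly $b$. The resulting fiberwise isomorphisms $A_g/\J_g \congto A^r_g$ respect multiplication and involution because $q$ is a $\ast$-homomorphism, assembling into a Fell bundle isomorphism $\A/\J \cong \A^r$.

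Finally, since $\delta_c^r$ is normal, Theorem~\ref{theo:coactions-separated-intro} and its surrounding discussion give $\CECR \cong C^*_r(\A^r)$, and combining this with $\A^r \cong \A/\J$ yields $\CECR \cong C^*_r(\A/\J)$. The main technical point will be step one—establishing $\delta_c$-invariance of $\NN_P$ and the spectral decomposition $\NN_P = \overline{\bigoplus_g \J_g}$—but this is essentially encoded in the construction of $\delta_c^r$ as a quotient coaction carried out earlier in the paper. Everything else is a routine Fell-bundle exercise.
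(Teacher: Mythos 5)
Your argument is correct in its essentials but reaches the theorem by a genuinely different route from the paper's. The paper first proves an abstract statement (Proposition~\ref{prop:Fell-bundle-quotient-conditional}): for a topologically $G$-graded \cstar{}algebra $A$ and a symmetric conditional expectation $P$ that vanishes on every fiber $A_g$ with $g\neq 1$ (for $C^*(E,C)$ this vanishing is the content of Lemma~\ref{lem:formula-for-P}), one has $A_{P,r}\cong C^*_r(\A/\J)$; the proof constructs a faithful conditional expectation $\tilde P_1\circ \tilde E_1$ on $C^*_r(\A/\J)$ factoring $P$ through $A\onto C^*_r(\A)\onto C^*_r(\A/\J)$ and invokes the uniqueness of the reduced quotient (Corollary~\ref{cor:prop-KM-red-quotient-exp}). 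The identification of $\A/\J$ with $\A^r$ then falls out because the isomorphism is implemented by a morphism of Fell bundles, and normality of $\delta_c^r$ is obtained as a \emph{byproduct}. You instead take the normality of $\delta_c^r$ and the equivariance of $q$ (Proposition~\ref{prop:coaction-reduced-separated}) as inputs and compare the bundles fiberwise. Both work; the paper's version buys a reusable general proposition (applied again in Example~\ref{exam:lamplighter}) and an independent proof of normality, while yours is shorter given what is already proved and makes the bundle isomorphism $\A/\J\cong\A^r$ the primary object, which is how the theorem is phrased.

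One assertion in your step one is false, though fortunately you never use it: $\NN_P$ is \emph{not} in general the closed span $\overline{\bigoplus_g \J_g}$ of its graded pieces. Applying the spectral projections to $a\in\NN_P$ does show $E_g(a)\in\J_g$, but it does not show that $a$ lies in the closed span of its own Fourier coefficients; for a coaction that is not normal this fails. The paper's Cuntz separated graph (Example~\ref{ex:red-Cuntz} and the example following the free-label corollary in Section~\ref{sec:Fell-bundle}) is a counterexample: there $C^*(E,C)=C^*(\F_n)$, $P=\tau$, $\NN_\tau=\ker\lambda\neq 0$, yet $\NN_\tau\cap A_g=0$ for every $g$, so $\J=0$ while $\NN_P\neq 0$. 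In general the passage from invariant ideals to Fell bundle ideals is not injective, and this is exactly the phenomenon the theorem is meant to capture (it is why the exotic algebras $C^*_r(\A)$ and $C^*(\A/\J)$ can sit strictly between $C^*(E,C)$ and $C^*_r(E,C)$). What your proof actually needs from step one is only the mechanical fact that $\J=(\NN_P\cap A_g)_{g\in G}$ is a Fell bundle ideal; combined with $\ker(q|_{A_g})=\J_g$ and the surjectivity of $q|_{A_g}$ onto $A_g^r$ via $q\circ E_g=E_g^r\circ q$, the rest of your argument stands once the spurious decomposition claim is deleted.
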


From this we see that the normalization of $\delta_c$ -- which corresponds to the dual coaction on $C^*_r(\A)$ --  
can be realized on some `exotic' \cstar{}algebra lying between $C^*(E,C)$ and $C^*_r(E,C)$. Similarly, the maximalization of $\delta_c^r$ -- which corresponds to the dual coaction on $C^*(\A/\J)$ -- also takes place on a \cstar{}algebra lying between $C^*(E,C)$ and $C^*_r(E,C)$.
And it may happen that $\J$ is not zero and all these \cstar{}algebras are different. Indeed, $\J=0$ if and only if $A_1\cap \NN_P=0$ (where $A_1\cap \NN_P$ is the unit fiber of $\J$, i.e., the fiber associated to the unit $1\in G$), and this happens if and only if $P$ restricts to a faithful conditional expectation on $A_1$. 

\begin{comment}
Using these results and the groupoid picture of graph \cstar{}algebras, it is shown in \cite[Corollary 3.9]{Kumjian-Pask:C-algebras_directed_graphs} that we have a canonical isomorphism
 \begin{equation}\label{eq:iso-skew-prod}
 C^*(E\times_c G) \rtimes_{\gamma} G \cong C^*(E) \otimes \K(\ell^2 G),
 \end{equation}
 where $\gamma$ denotes the action of $G$ on  $C^*(E \times_c G)$ induced by the canonical (free) action of $G$ on $E \times_c G$. Alternatively, using the Gross-Tucker Theorem, this can be stated in terms of a graph $F$ carrying a free action $\theta$ of $G$ as an isomorphism of the form (see \cite[Corollary 3.10]{Kumjian-Pask:C-algebras_directed_graphs}):
\begin{equation}\label{eq:iso-free}
C^*(F) \rtimes_{\theta} G \cong C^*(F/G) \otimes \K(\ell^2 G)
\end{equation}
\end{comment}

Among all groups and  labeling functions $c\colon E^1\to G$ we can possibly consider on a graph $E$, there exists always a canonical label, the so-called \emph{free label} $\fl\colon E^1\to \Free$, where $\Free$ is the free group on the set $E^1$ and $\fl$ is then the canonical embedding. Applying our results to this label we get the following:

\begin{corollary}\label{Fell-bundle-quotient-intro-cor}
Every separated graph \cstar{}algebra $C^*(E,C)$ has a canonical $\Free$-grading structure yielding a Fell bundle $\A$ over $\Free$ with $C^*(\A)\cong C^*(E,C)$. And if $(E,C)$ is finitely separated, the quotient of $\A$ by the nucleus $\NN_P$ of the conditional expectation $P\colon C^*(E,C)\to C_0(E^0)$ yields a $\Free$-grading structure for $C^*_r(E,C)$ with an associated Fell bundle $\A^r$ isomorphic to the quotient $\A/\J$ by the ideal $\J=\{A_g\cap\NN_P\}_{g\in G}$ of $\A$, so that $C^*_r(\A/\J)\cong C^*_r(E,C)$.
\end{corollary}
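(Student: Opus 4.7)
The plan is to derive this corollary as an immediate specialization of Theorems~\ref{theo:coactions-separated-intro} and~\ref{Fell-bundle-quotient-intro} to the canonical \emph{free label} $\fl\colon E^1\to\Free$, where $\Free$ is the free group on the set $E^1$ and $\fl$ sends each edge to the corresponding generator. Since $\fl$ is by construction a function from $E^1$ into a group, it automatically qualifies as a labeling function in the sense required by the earlier theorems, so no additional verification is needed at this step.

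First I would apply Theorem~\ref{theo:coactions-separated-intro} with $c=\fl$ and $G=\Free$ to obtain a coaction $\delta_\fl$ of $\Free$ on $C^*(E,C)$ determined by $\delta_\fl(S_e)=S_e\otimes\fl(e)$. As noted in the discussion following that theorem, $\delta_\fl$ is maximal, so its spectral subspaces $A_g=\{a\in C^*(E,C):\delta_\fl(a)=a\otimes g\}$ for $g\in\Free$ assemble into a Fell bundle $\A$ over $\Free$ with $C^*(\A)\cong C^*(E,C)$. This settles the first half of the corollary.

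For the second half, I would assume $(E,C)$ is finitely separated and invoke the reduced part of Theorem~\ref{theo:coactions-separated-intro}, which produces a normal coaction $\delta_\fl^r$ of $\Free$ on $C^*_r(E,C)$; its spectral subspaces form a Fell bundle $\A^r$ over $\Free$ satisfying $C^*_r(\A^r)\cong C^*_r(E,C)$. Theorem~\ref{Fell-bundle-quotient-intro} then identifies $\A^r$ with the quotient Fell bundle $\A/\J$, where $\J=\{A_g\cap\NN_P\}_{g\in\Free}$ is the Fell bundle ideal coming from the nucleus of the canonical conditional expectation $P\colon C^*(E,C)\to C_0(E^0)$, which yields the final isomorphism $C^*_r(\A/\J)\cong C^*_r(E,C)$. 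Since the entire argument reduces to plugging a particular labeling into the preceding two theorems, there is no real obstacle to surmount; the substantive content of the corollary is the recognition that the free group on $E^1$ is intrinsically attached to $(E,C)$, producing canonical $\Free$-gradings on both $C^*(E,C)$ and $C^*_r(E,C)$ that require no auxiliary choice of group.
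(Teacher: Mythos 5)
Your proposal is correct and matches the paper's own treatment: the corollary is obtained in Section~\ref{sec:Fell-bundle} precisely by specializing the general results (maximality of $\delta_c$ from Theorem~\ref{isomordeltamaximal}, normality of $\delta_c^r$, and the quotient identification $C^*_r(E,C)\cong C^*_r(\A/\J)$ from Theorem~\ref{theo:spectral-dec-reduced}) to the free label $\fl\colon E^1\to\Free$, with no additional argument needed. The paper likewise presents it as a summary rather than a proof, so your reduction to the two preceding theorems is exactly the intended route.
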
 

Since the free label only depends on the graph, this gives an intrinsic Fell bundle structure on every separated graph \cstar{}algebra. We describe these
Fell bundles in certain examples.

\medskip

This paper is structured in four main parts, as follows. Section \ref{sec:Preliminaries} is dedicated to reviewing the background and preliminaries needed in the following sections. Mainly to fix our conventions and notations, we introduce graphs and their skew product graphs with groups. We also review the theory of crossed products by coactions, and explore the connections between coactions of discrete groups and Fell bundles. 

In Section~\ref{cap2} we focus on full separated graph \cstar{}algebras $C^*(E,C)$, proving the Gross-Tucker Theorem~\ref{GT-intro} and parts of Theorems~\ref{theo:coactions-separated-intro} and~\ref{theo:coactions-separated-intro-1}, the ones dealing with full \cstar{}algebras of separated graphs. 

In Section~\ref{cap3}, we review the construction from  \cite{Ara-Goodearl:C-algebras_separated_graphs} of reduced separated graph \cstar{}algebras $C^*_r(E,C)$ through the theory of reduced amalgamated free products that goes back to Voiculescu \cite{Voiculescu:Symmetries}. Although this is an elegant definition, it is not very easy to handle in practice. We propose to look at these \cstar{}algebras from the point of view of the canonical conditional expectation
$P\colon C^*(E,C)\to C_0(E^0)$ and recover $C^*_r(E,C)$ from the quotient $C^*(E,C)/\NN_P$ by the nucleus $\NN_P$ of $P$, which is the largest closed two-sided ideal contained in $\ker(P)$. Indeed, this can be defined for every conditional expectation $P\colon A\to B$ and gives a special ``reduced'' quotient $A_{P,r}$ carrying an (almost) faithful conditional expectation onto $B$, see \cite{Kwasniewski-Meyer:Essential}. This construction is naturally functorial and behaves well with many other constructions, like tensors and crossed products by coactions. This machinery will be then used to prove the statements involving the reduced separated graph \cstar{}algebras in Theorems~\ref{theo:coactions-separated-intro} and~\ref{theo:coactions-separated-intro-1}.

Finally, in Section~\ref{sec:Fell-bundle} we describe the Fell bundles associated to the coactions on separated graph \cstar{}algebras coming from labeling functions via their spectral decompositions. In particular we prove Theorem~\ref{Fell-bundle-quotient-intro} and Corollary~\ref{Fell-bundle-quotient-intro-cor}. We also present some concrete examples of separated graphs and describe their free label Fell bundle structures.

\section{Preliminaries}\label{sec:Preliminaries}

In this section, we summarize some of the main tools we are going to use throughout the paper. This will also consolidate the notation we use.

\subsection{Directed graphs}\label{sec:DirectedGraphs}
	A \emph{directed graph} $E$ is a quadruple of the form $E=(E^0,E^1,s,r)$ consisting of two sets $E^0$,$E^1$ and two maps $s,r: E^1 \rightarrow E^0$. The elements of $E^0$ and $E^1$ are called vertices and edges and the maps $s,r$ are called the source and range maps. 
		
	The graph is called \emph{row-finite} if every vertex emits at most finitely many edges. We follow the convention of composing paths, that is, a \emph{finite path} in $E$ is a sequence of edges of the form $\mu:= e_1 \ldots e_n$ with $r(e_i)=s(e_{i+1})$ for all $i \in \{1, \ldots, n-1\}$. The length of $\mu$ is $|\mu|:=n$ and paths with length $0$ are identified with the vertices of $E$ (we set $s(v)=r(v)=v$). We denote by $E^n$ the set of all finite paths with length $n$ and $\text{Path}(E) := \displaystyle\cup_{n=0}^{\infty} E^n$ the set of all paths of $E$. We can extend the source and range maps to $\text{Path}(E)$ in the obvious way: if $\mu=e_1\ldots e_n \in \text{Path}(E)$, then $s(\mu) = s(e_1)$ and $r(\mu) = r(e_n)$. Given two paths $\mu,\nu \in \text{Path}(E)$ with $r(\mu) = s(\nu)$, one obtains a new path $\mu\nu$ by concatenation with $|\mu\nu| = |\mu|+|\nu|$. 
	
	Given a graph $E$, we define its {\it extended graph}  (also called the {\it double graph} of $E$) as the new graph $\hat{E}=(E^0,E^1\cup (E^1)^*, r,s)$, where $(E^1)^*= \{e^* : e\in E^1\}$ and we set $s(e^*)= r(e)$, $r(e^*)= s(e)$ for all $e\in E^1$. Moreover, given two graphs $E$ and $F$, we define a {\it graph morphism} $f: E \to F$ as a pair of maps $f=(f^0, f^1)$ where $f^i:E^i \to F^i$ $(i=0,1)$ which commute with source and range maps, that is, satisfy $f^0(r(e)) = r(f^1(e))$ and $f^0(s(e)) = s(f^1(e))$, for all $e \in E^1$. If $f^0$ and $f^1$ are bijective maps then $f$ is called a {\it graph isomorphism}. There is a natural notion of automorphism of graphs and the collection of all automorphisms of a graph $E$ forms a group under composition, denoted by $\Aut(E)$. 
	
	All groups in this paper are discrete. An \emph{action} of $G$ on a (directed) graph $E$ consists of actions on the set of vertices and edges of $E$ in such a way that the source and range maps are $G$-equivariant: $s(g\cdot e)=g\cdot s(e)$ and $r(g\cdot e)=g\cdot r(e)$.	 An action can also be viewed as a group homomorphism $\alpha: G \rightarrow \Aut(E)$. It is called \emph{free} if it acts freely on the vertices. If $G$ acts freely on the vertices then it also acts freely on the edges because of the existence of the equivariant maps $s,r\colon E^1\to E^0$.

\subsection{Skew products of graphs}
\label{subsection:skewproductsofgraphs}
 Let $E$ be a (directed) graph. Let us also fix a discrete group $G$ and a function $c: E^1 \rightarrow G$, which we interpret as a \emph{labeling function}.

\begin{definition}
	With notations as above, we define the \emph{skew product graph} $$E \times_c G := (E^0 \times G,E^1 \times G, s,r )$$ in which the sets of vertices and edges are the Cartesian products of $E^0$ and $E^1$ with $G$, respectively, and the source and range maps (denoted by the same letters) are defined by $$s(e,g) = (s(e),g) \quad\text{ and } \quad r(e,g) = (r(e),gc(e))$$ for all $e \in E^1$ and $g \in G$. 
\end{definition}

If $E$ is row-finite, then so is $E\times_c G$ since $s^{-1}(v,g) = s^{-1}(v) \times \{g\}$ by definition. In the literature $E \times_c G$ is referred to as the \emph{derived graph} or \emph{voltage graph}, see \cite{Gross-Tucker:topolgical-graph-theory}. Skew product graphs have many applications, for instance, they are used in the theory of branched coverings of surfaces, see \cite{Gross-Tucker:generating}. There are different types of conventions here: for us it is better to define the skew product graph $E \times_c G$ by using the above formula, since we are focusing on $s^{-1}(v)$ instead of $r^{-1}(v)$, and also to make the results of our main theorems later more natural. Our convention is compatible with \cite{Gross-Tucker:topolgical-graph-theory}, and different from those in \cite{Kumjian-Pask:C-algebras_directed_graphs}, \cite{Kaliszewski-Quigg-Raeburn:Skew_products}.

We may extend the labeling function $c$ to $\text{Path}(E)$ by defining $c(v) := 1$ for every $v \in E^0$ and $c(\mu) := c(e_1)\cdots c(e_n)$ for every $\mu = e_1 \cdots e_n \in \text{Path}(E)$. Observe that we have $c(\mu\nu) = c(\mu)c(\nu)$ for every $\mu, \nu \in \text{Path}(E)$ with $r(\mu) = s(\nu)$. 
We may consider $\text{Path}(E)$ as a (small) category, with set of objects $E^0$, called the {\it path category} of $E$. The group $G$ is a category with a single object, in which each morphism is invertible. We thus see that any labeling $c\colon E^1\to G$ induces a functor $c\colon \text{Path}(E)\to G$ from the path category of $E$ to $G$.

The labeling function $c$ can be extended to a labeling function on the extended graph $\hat{E}$, also denoted by $c$, by $c(e^*)=c(e)^{-1}$. This allows us to define $c$ on $\text{Path}(\hat{E})$, with the property that $c(\mu \nu)= c(\mu)c(\nu)$ whenever $\mu,\nu\in \text{Path}(\hat{E})$ and $r(\mu)=s(\nu)$. Hence every labeling $c\colon E^1\to G$ induces a $*$-functor $c\colon \text{Path}(\hat{E})\to G$ from the $*$-category $\text{Path}(\hat{E})$ to the $*$-category $G$ (where the involution in the latter is given by the inverse). 

%For each path in $E$ we can define a path in $E \times_c G$ in the following way: 

For each $\mu \in \text{Path}(E)$ of the form $\mu=e_1\cdots e_n$, with $r(e_i)=s(e_{i+1})$ for $i \in \{1,\ldots,n-1\}$, and $g \in G$, we define a path $(\mu,g)$ in the skew product graph by: 
\begin{align}\label{caminhografoskew}
(\mu,g) := (e_1,g)(e_2,gc(e_1))\cdots (e_n,gc(e_1\cdots e_{n-1})).
\end{align}
 Due to the definition of range and source maps in the skew product graph, all paths in $E \times_c G$ are of this form. For example, if $e,f \in E^1$ and $g,h \in G$, the short path $(e,g)(f,h)$ makes sense if $r(e,g) = s(f,h)$, which means that $r(e)=s(f)$ and $h=gc(e)$. The notation $(\mu,g)$ will be used exclusively for the paths in $E\times_c G$ defined as in \eqref{caminhografoskew}. One should not confuse it with $(\mu,g) = (e_1\cdots e_n,g)$ which is not a path on $E \times_c G$.

Let $G$ act on a graph $E$. We define the \emph{quotient graph} $$E/G:=((E/G)^0,(E/G)^1,s_G,r_G)$$ where $(E/G)^0 = E^0/G$ and $(E/G)^1 = E^1/G$ are the equivalence classes of vertices and edges respectively under the action of $G$ and the source and range maps are defined to be $s_G([e]) = [s(e)]$ and $r_G([e]) = [r(e)]$. It is easy to check that the source and range maps are well defined because the action commutes with both maps. Moreover, the quotient map $q: E \rightarrow E/G$ is a surjective graph morphism. 

If $c\colon E^1 \to G$ is a labeling function, then $E \times_c G$ is a graph which carries a natural free action $\gamma$ of $G$ by left multiplication: $$\gamma_g(v,h):=(v,gh) \quad \text{ and } \quad  \gamma_g(e,h):=(e,gh) $$ for all $v \in E^0$, $e \in E^1$ and $g \in G$. Moreover, the quotient of the skew product graph by the action $\gamma$ recovers $E$. As already recalled in the introduction, the theorem of Gross-Tucker \cite{Gross-Tucker:topolgical-graph-theory} shows that the skew product graphs are, up to isomorphism, exactly the graphs carrying free actions of groups.

\subsection{Coactions and their crossed products}\label{sec:coactions}

Now, we will introduce coactions of discrete groups and their crossed products. Although we only need discrete groups, we remark that this also generalizes to locally compact groups, see \cite{Quigg:FullAndReducedCoactions}. We will assume that all representations here are nondegenerate and all tensor products $\otimes$ are minimal with identity maps denoted by $\id$ with indices like $\id_A$ when necessary. We mainly follow \cite{Echterhoff-Quigg:InducedCoactions}, \cite{Landstad:Duality}, \cite{Echterhoff-Kaliszewski-Quigg:Maximal_Coactions},  \cite{Quigg:FullAndReducedCoactions},  \cite{Nilsen:Full_crossed},  \cite{Katayama:Takesaki_Duality},  \cite{Landstad-Philips-Raeburn:Representations_Coactions},  \cite{Kaliszewski-Quigg-Raeburn:Skew_products} and \cite{Nilsen:DualityCrossedProducts}.

First we recall that the group \cstar{}algebra $C^*(G)$ carries a natural comultiplication $\delta_G: \CG \rightarrow \CG \otimes \CG $ such that $\delta_G(g) =  g \otimes g $, where $g$ here actually means its image in $\CG$ under the universal representation. That $\delta_G$ is a comultiplication means $(\delta_G \otimes \id_G) \circ \delta_G = (\id_G \otimes \delta_G) \circ \delta_G$.

\begin{definition}\label{definicaocoacao}
	A \emph{coaction} of a discrete group $G$ on a \cstar{}algebra $A$ is a *-homomorphism $\delta: A \rightarrow A \otimes C^*(G)$ 
	satisfying the \emph{coaction identity} $(\delta\otimes\id)\circ\delta=(\id\otimes\delta_G)\circ\delta$ 
	and such that  it is \emph{nondegenerate} in the sense that $\overline{\delta(A)(1 \otimes C^*(G))} = A \otimes \CG$.
\end{definition}

	Coactions of an abelian discrete group $G$ correspond to (strongly continuous) actions of the (compact) Pontryagin dual group $\widehat{G}$, see \cite{Echterhoff-Kaliszewski-Quigg-Raeburn:Categorical}*{Example A.23}.

In general, a coaction of $G$ on $A$ induces a (left Banach algebra) action of $C^*(G)^*\cong B(G)$, the Fourier-Stieltjes algebra $B(G)$ of $G$ via the formula $f\cdot a:=(\id\otimes f)(\delta(a))$. Recall that $B(G)$ can be identified with the algebra consisting of all bounded functions on $G$ that can be expressed as matrix coefficients of unitary representations of $G$. The Fourier algebra $A(G)$ is the (closed) *-subalgebra of $B(G)$ consisting of all matrix coefficients of the left regular representation $\lambda\colon G\to \Bound(\ell^2(G))$. Every element $f \in A(G)$ is of the form $f(g)=\langle \xi,\lambda_g\eta\rangle$ with $\xi,\eta\in \ell^2(G)$. In particular, the characteristic function $\chi_g$ of $\{g\}$ belongs to $A(G)$ since $\chi_g = \langle \delta_g,\lambda(.)\delta_1\rangle_{\ell^2(G)}$, where $(\delta_g)_{g\in G}\subseteq \ell^2(G)$ is the standard basis. Then $\lambda$ is determined by $\lambda_g(\delta_h) = \delta_{gh}$ for all $g,h \in G$. Notice that $\chi_1$ is the canonical trace on $\CG$. More details on the Fourier algebra can be found in \cite{Eymard:FourierAlgebra}.  
%The comultiplication $\delta_G$ is an example of a coaction of $G$ on $\CG$. More generally, dual coactions on crossed products $A\rtimes_\alpha G$ or $A\rtimes_{\alpha,r}G$ are standard examples of coactions. 

Coactions of discrete groups are strongly related to Fell bundles. We refer the reader to \cite{Exel:Partial_amenable_free} and \cite{Exel:Partial_dynamical} for the theory of Fell bundles and their \cstar{}algebras.  
For a Fell bundle $\A= \{A_g \}_{g\in G}$ over a discrete group $G$, one can construct two \cstar{}algebras, the full \cstar{}algebra $C^*(\A)$ and the reduced \cstar{}algebra $C^*_r(\A)$. These carry canonical coactions of $G$, so in particular they are topologically $G$-graded \cstar{}algebras. This is defined in such a way that the fibers
 $A_g$ of $\A$ can be realized as the grading subspaces of both $C^*(\A)$ or $C^*_r(\A)$. And regarding $A_g$ as subspaces of these \cstar{}algebras, the coactions   
$\delta_\A: C^*(\A) \rightarrow C^*(\A) \otimes C^*(G)$ and $\delta_{\A}^r: C_r^*(\A) \rightarrow C_r^*(\A) \otimes C^*(G)$
act by $a_g\mapsto a_g\otimes g$ for $a_g\in A_g$.  These are generally called `dual coactions'. They generalize the dual coactions on crossed products when the underlying Fell bundle is a semi-direct product bundle $\A=A\times G$ associated with an action of $G$ on a \cstar{}algebra. More generally, one can consider even (twisted) partial actions here, see \cite{Exel:TwistedPartialActions}.

Conversely, if $\delta$ is a coaction of $G$ on a \cstar{}algebra $A$ we can consider the spectral subspaces 
$$A_g:=\{ a \in A ~|~ \delta(a)=a \otimes g \}.$$ 
This gives us a grading for $A$ over $G$ and consequently a Fell bundle $\A = \{A_g\}_{g \in G}$ over $G$ with operations induced from $A$. Moreover, this is a topological grading in the sense of \cite{Exel:Partial_dynamical}*{Definition~19.2}. This means that the grading admits pairwise orthogonal projections $E_g\colon A\to A_g$. Indeed, in the setting of coactions they are given by $E_g=(\Id\otimes\chi_g)\circ \delta$, that is, $E_g(a)=\chi_g\cdot a$, where $\chi_g$ is the characteristic function of $\{g\}$ regarded as an element of $A(G)\subseteq B(G)\cong C^*(G)^*$.

The correspondence between coactions and Fell bundles is described in more detail in \cite{Ng:Discrete-Coactions,Quigg:Discrete_coactions_and_bundles}.
This correspondence can be upgraded to certain equivalences if we range the coaction we deal with. More precisely, the assignments $\A\mapsto (C^*(\A),\delta_\A)$ and $\A\mapsto (C^*_r(\A),\delta_\A^r)$ are equivalences of categories between the category of Fell bundles over $G$ and \emph{maximal} or \emph{normal} coactions of $G$, respectively. We are going to explain this in more detail in what follows.

%Let us fix a coaction $\delta: A \to A \otimes \CG$ and consider the associated Fell bundle $\A=\{A_g\}_{g \in G}$ via the spectral subspaces as above.

%Using this we can introduce covariant representations and define crossed products associated with coactions.

\begin{definition}\label{condicaodecovariancia}
	Let $(A,G,\delta)$ be a coaction and $B$ a \cstar{}algebra. A \emph{covariant representation} of $(A,G,\delta)$ in a multiplier 
\cstar{}algebra $\M(B)$ is a pair $(\pi,\mu)$ where $\pi: A \rightarrow \M(B)$ and $\mu: C_0(G) \rightarrow \M(B)$ are nondegenerate *-homomorphisms such that $$\pi(a) \mu(\chi_h) =  \mu(\chi_{gh})\pi(a) \text{ for all } a \in A_g, g,h \in G.$$  
\end{definition}

The definition above is special for discrete groups, see \cite[Lemma 3.1]{Deicke-Pask-Raeburn:Coverings} for the comparison
with the more general definition that applies to a locally compact groups. 

If $(\pi,\mu)$ is a covariant representation of $(A,G,\delta)$ into  $\M(B)$, then 
$C^*(\pi,\mu) := \cspn{\pi(A)\mu(C_0(G))}$ is a \cstar{}subalgebra of $\M(B)$ as can be shown directly from the covariance condition.
The so-called regular covariant representations of coactions are constructed in \cite{Raeburn:OnCrossedProductsByCoactions}*{Proposition 2.6}:
If $\pi$ is a nondegenerate *-homomorphism of $A$ to $\M(B)$ then the pair $((\pi \otimes \lambda)\circ \delta,1 \otimes M)$ is a covariant representation of $(A,G,\delta)$ into $\M(B \otimes \K(\ell^2 G))$, called the regular covariant representation induced by $\pi$. Applying this to the identity map $\id_A: A \to A$, we get the regular covariant representation $(j_A,j_G)=(\id_A \otimes \lambda)\circ \delta,1 \otimes M)$. The crossed product by the coaction is then defined as:
$$A \rtimes_\delta G := C^*(j_A,j_G)\sbe \M(A \otimes \K(\ell^2 G)).$$
This is, therefore, a concrete \cstar{}algebra, but it also has a universal property for covariant representations, see \cite[Theorem 4.1(b)]{Raeburn:OnCrossedProductsByCoactions}. More precisely
if $(\pi,\mu)$ is a covariant representation of $(A,G,\delta)$ into $\M(B)$, there is a unique nondegenerate *-homomorphism $\pi \times \mu: A \rtimes_\delta G\rightarrow \M(B) $ such that $\pi \times \mu \circ j_A = \pi$ and $\pi \times \mu \circ j_G = \mu$. The *-homomorphism $\pi \times \mu$ is called the integrated form of the covariant representation $(\pi,\mu)$ of $(A,G,\delta)$.
%Hence we could alternatively define the crossed product by a coaction via a universal property.

The coactions of $G$ form a category in a natural way: if $(A,G,\delta)$ and $(B,G,\epsilon)$ are two coactions, we say that a *-homomorphism $\varphi: A \rightarrow B$ is $G$-equivariant if 
$$ (\varphi \otimes \id_G) \circ \delta = \epsilon \circ \varphi.$$ 
The $G$-equivariance of $\varphi: A \to B$ can be verified in terms of the corresponding $G$-grading structures: $\varphi$ is $G$-equivariant if and only if it is a graded map in the sense that $\varphi(A_g) \subseteq B_g$ for all $g \in G$. The crossed product construction $(A,\delta)\mapsto A\rtimes_\delta G$ is a functor:
If $\varphi: A \rightarrow B$ is a $G$-equivariant *-homomorphism, then there is a unique induced *-homomorphism $\varphi \rtimes G: A \rtimes_\delta G \rightarrow B \rtimes_\epsilon G$ such that $\varphi \rtimes G(j_A(a)j_G^A(f)) = (j_B \circ \varphi)(a)j_G^B(f)$ for $a \in A$ and $f \in C_0(G)$, see \cite{Echterhoff-Kaliszewski-Quigg-Raeburn:Categorical}*{Lemma A.46}

As shown in \cite[Theorem 19.1]{Exel:Partial_dynamical} a topologically $G$-graded \cstar{}algebra $A$ with grading viewed as a Fell bundle $\A=\{A_g\}_{g \in G}$ always `lies' between $C^*(\A)$ and $C^*_r(\A)$ in the sense that the identity map on $\A$ induces a commutative diagram of surjective *-homomorphisms
	\begin{equation}\label{eq:diag-canonical-epi}
	\begin{tikzcd}
	C^*(\A)  \arrow{rr}{\Lambda } \arrow[swap]{dr}{\sigma} & &C_r^*(\A) \\
	& A \arrow{ur}{\psi} &
	\end{tikzcd}
	\end{equation}
where $\Lambda: C^*(\A) \to C_r^*(\A)$ denotes the regular representation of $\A$. If the topological $G$-grading is associated with a $G$-coaction $\delta\colon A\to A\otimes C^*(G)$, then we can say more. In this case it is straightforward to check that all the homomorphisms above are $G$-equivariant (with respect to the coactions $\delta_\A$, $\delta$ and $\delta_\A^r$), so they induce a commutative diagram of surjective *-homomorphisms:
	\begin{equation}\label{eq:diag-canonical-epi-integrated}
	\begin{tikzcd}
	C^*(\A) \rtimes_{\delta_\A} G \arrow{rr}{\Lambda \rtimes G} \arrow[swap]{dr}{\sigma \rtimes G} & &C_r^*(\A)\rtimes_{\delta_\A^r} G  \\
	& A\rtimes_{\delta} G \arrow{ur}{\psi \rtimes G} &
	\end{tikzcd}
	\end{equation}
Indeed, one can show that all maps in the above diagram~\eqref{eq:diag-canonical-epi-integrated} are isomorphisms, see \cite{Echterhoff-Quigg:InducedCoactions}*{Lemma 2.1}, although none of the homomorphisms in~\eqref{eq:diag-canonical-epi} are isomorphisms in general. This is exactly where the maximality or the normality of the coaction $\delta$ comes into play. Maximal and normal coactions are defined and studied in \cite{Exel:Partial_dynamical}, \cite{Echterhoff-Kaliszewski-Quigg:Maximal_Coactions} and \cite{Quigg:FullAndReducedCoactions}. We review the main ideas in what follows and complete the picture above.

For a coaction $\delta\colon A\to A\otimes C^*(G)$ we consider the canonical realization of the crossed product $A \rtimes_\delta G$ 
as a \cstar{}subalgebra of $\M(A \otimes \K(\ell^2 G))$ via the integrated form $\Pi:=j_A\rtimes j_G$ of the regular covariant pair $(j_A,j_G)$. The \emph{dual action} $\hat\delta$ of $G$ on $A\rtimes_\delta G$ can then be implemented via the unitary representation  $U:=\id_A \otimes \rho^G: G \to U\M(A \otimes \K(\ell^2 G))$ defined by $U_g := \id_A \otimes \rho_g^G$, where $\rho^G$ denotes the right regular representation of $G$. In other words, 
$$\hat\delta_g(x)=U_g x U_g^*\quad\forall g\in G.$$
This means that $(\Pi,U)$ is a covariant representation of the $G$-action $(A \rtimes_{\delta} G, G, \hat{\delta})$. 
And by the universal property of crossed products by actions we get a surjective *\nb-homomorphism 
\begin{align}\label{PI}
\Pi \times U: A \rtimes_\delta G \rtimes_{\widehat{\delta}} G \twoheadrightarrow A \otimes \K(\ell^2 G).
\end{align}
This is surjective because $\spnfecho\{M_f\rho_g ~|~ f \in C_0(G), g \in G\}$ is equal to $\K(\ell^2 G)$ and because $\delta$ is nondegenerate. The interesting fact is that this map is not always injective. 

By definition, the coaction $\delta: A \to A \otimes \CG$ is \emph{maximal} if $\Pi \times U$ is an isomorphism. And it is called \emph{normal} if $\Pi\rtimes U$ factors through an isomorphism
$$A \rtimes_\delta G \rtimes_{\widehat{\delta},r} G \congto A \otimes \K(\ell^2 G).$$

Here are then the main results connecting these concepts to Fell bundles (see \cite[Proposition 4.2]{Echterhoff-Kaliszewski-Quigg:Maximal_Coactions}):

\begin{proposition} 
	\label{propmaximalcoaçãofibras}
	Let $(A,G,\delta)$ be a coaction. Then $\delta:A \to A \otimes \CG$ is a maximal coaction if and only if the canonical map $\sigma: C^*(\A) \twoheadrightarrow A$ from~\eqref{eq:diag-canonical-epi}  
	is an isomorphism. And $\delta$ is normal if and only if the canonical surjection $\psi\colon A\to C^*_r(\A)$ from~\eqref{eq:diag-canonical-epi} is an isomorphism.
\end{proposition}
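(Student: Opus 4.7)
The plan is to reduce both equivalences to the baseline facts that the dual coactions $\delta_\A$ on $C^*(\A)$ and $\delta_\A^r$ on $C^*_r(\A)$ are always maximal and always normal, respectively, and then to exploit naturality of Katayama--Takesaki duality together with the isomorphisms $\sigma\rtimes G$ and $\psi\rtimes G$ from diagram~\eqref{eq:diag-canonical-epi-integrated}.

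First I would record (or cite from the Fell bundle literature) that $(C^*(\A),\delta_\A)$ is a maximal coaction and that $(C^*_r(\A),\delta_\A^r)$ is a normal one; these are essentially built into the very construction of the cross-sectional \cstar{}algebras of $\A$ and correspond to full and reduced Katayama duality for Fell bundle crossed products. Granting this, the easy half of each equivalence is immediate: if $\sigma\colon C^*(\A)\to A$ is an isomorphism, transporting the coaction along $\sigma$ identifies $(A,\delta)$ with $(C^*(\A),\delta_\A)$, so $\delta$ is maximal; and if $\psi\colon A\to C^*_r(\A)$ is an isomorphism, the same transport identifies $(A,\delta)$ with $(C^*_r(\A),\delta_\A^r)$, so $\delta$ is normal.

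For the forward direction of the maximal case, I would start from the fact, already recorded in the excerpt via \cite{Echterhoff-Quigg:InducedCoactions}*{Lemma 2.1}, that $\sigma\rtimes G\colon C^*(\A)\rtimes_{\delta_\A} G\to A\rtimes_\delta G$ is an isomorphism. Crossing this $G$-equivariant map with the dual actions $\widehat{\delta_\A}$ and $\widehat\delta$ yields an isomorphism
\[
(\sigma\rtimes G)\rtimes G\colon C^*(\A)\rtimes_{\delta_\A} G\rtimes_{\widehat{\delta_\A}} G \xrightarrow{\sim} A\rtimes_\delta G\rtimes_{\widehat\delta} G.
\]
Assuming $\delta$ is maximal, the canonical map $\Pi\times U$ is an isomorphism $A\rtimes_\delta G\rtimes_{\widehat\delta} G\xrightarrow{\sim} A\otimes\K(\ell^2G)$, and the analogous map on the left is an isomorphism because $\delta_\A$ is already known to be maximal. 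Naturality of Katayama duality with respect to $G$-equivariant \Star{}homomorphisms identifies the transported map under these two isomorphisms with $\sigma\otimes\id_{\K(\ell^2G)}$; hence this map is an isomorphism, and since tensoring with $\K(\ell^2G)$ reflects both injectivity and surjectivity, so is $\sigma$.

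The normal case runs parallel: I would begin from the isomorphism $\psi\rtimes G\colon A\rtimes_\delta G\xrightarrow{\sim} C^*_r(\A)\rtimes_{\delta_\A^r} G$, cross by the dual actions using now the \emph{reduced} crossed products $\rtimes_{\widehat\delta,r} G$ and $\rtimes_{\widehat{\delta_\A^r},r} G$, and invoke normality of $\delta$ and of $\delta_\A^r$ to identify both resulting double reduced crossed products with $A\otimes\K(\ell^2G)$ and $C^*_r(\A)\otimes\K(\ell^2G)$, respectively. Naturality then forces $\psi\otimes\id_{\K(\ell^2G)}$, and hence $\psi$, to be an isomorphism. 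The main technical point, and the one I expect to require most care, is the verification that the Katayama isomorphism $\Pi\times U$ (and its reduced analogue) is natural in $G$-equivariant \Star{}homomorphisms, so that $(\sigma\rtimes G)\rtimes G$ and $(\psi\rtimes G)\rtimes_r G$ really do correspond under these identifications to $\sigma\otimes\id_{\K(\ell^2G)}$ and $\psi\otimes\id_{\K(\ell^2G)}$. Once this naturality is established, both equivalences follow by a short diagram chase.
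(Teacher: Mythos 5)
Your argument is correct, but note that the paper does not prove this proposition at all: it is quoted verbatim from \cite{Echterhoff-Kaliszewski-Quigg:Maximal_Coactions}*{Proposition 4.2}, so there is no internal proof to compare against. What you have written is essentially the standard proof of that cited result, namely the uniqueness of maximalizations and normalizations obtained by chasing the naturality square for the (full, respectively reduced) Katayama map around the isomorphisms $(\sigma\rtimes G)\rtimes G$ and $(\psi\rtimes G)\rtimes_r G$, and then stripping off $\otimes\,\K(\ell^2G)$. The only inputs you should be careful to attribute rather than wave at are the maximality of $\delta_\A$ on $C^*(\A)$ --- which for general Fell bundles is a genuine theorem (see \cite{Buss-Echterhoff:Maximality}, already in the bibliography), not something ``built into the construction'' --- and the normality of $\delta_\A^r$, together with the naturality of $\Pi\times U$ on generators, which you correctly flag as the technical point and which does check out.
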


There are other characterizations of maximal or normal coactions. For instance, $\delta\colon A\to A\otimes C^*(G)$ is maximal if and only if it lifts to a (necessarily injective) homomorphism from $A$ to the maximal tensor product $A\otimes_{\max}C^*(G)$, see \cite{Buss-Echterhoff:Maximality}*{Theorem~5.1}. And $\delta$ is normal if only only if it factors through an injective homomorphism $\delta^r:=(\Id\otimes\lambda)\circ \delta\colon A\to A\otimes C^*_r(G)$, see \cite{Quigg:FullAndReducedCoactions}.
This, in turn, means that $\delta$ is \emph{normal} if and only if $j_A: A \to \M(A \rtimes_{\delta} G)$ is an injective map, or that the conditional expectation $E=(\Id\otimes\chi_1)\circ \delta\colon A\to A_1$ is faithful.

Every coaction admits a \emph{maximalization} which is unique up to isomorphism (see \cite[Theorem 3.3]{Echterhoff-Kaliszewski-Quigg:Maximal_Coactions}): a maximalization of $(A,\delta)$ is another coaction $(B,\epsilon)$ of $G$ endowed with a surjective $G$-equivariant homomorphism  $\varphi: B \to A$ such that $\varphi \rtimes G: B \rtimes_{\epsilon} G \to A \rtimes_{\delta} G$ is an isomorphism. Similarly, a \emph{normalization} of $(A,\delta)$ is a $G$-coaction $(C,\gamma)$ with a surjective homomorphism $\psi\colon A\onto C$ that induces an isomorphism $\psi\rtimes G\colon A\rtimes_\delta G\congto C\rtimes_\gamma G$. This holds even in the more general context of locally compact groups.
For discrete groups, this boils down again to looking at the associated Fell bundles: the diagram of isomorphisms~\eqref{eq:diag-canonical-epi-integrated} means that $(C^*(\A),\delta_\A)$ is the maximalization of $(A,\delta)$ and $(C^*_r(\A),\delta_\A^r)$ is its normalization if $\A$ is the Fell bundle associated with $\delta$.

\section{Separated Graph $C^*$-algebras}\label{cap2}
In this section, our aim is to obtain a version of the Gross-Tucker Theorem, allowing us to characterize exactly the separated graphs that carry a free group action, and to obtain some duality results involving separated graph \cstar{}algebras, generalizing previous works on ordinary graph \cstar{}algebras by A. Kumjian and D. Pask in \cite{Kumjian-Pask:C-algebras_directed_graphs} and also by S. Kaliszewski, J. Quigg, and I. Raeburn in \cite{Echterhoff-Quigg:InducedCoactions} and \cite{Kaliszewski-Quigg-Raeburn:Skew_products}, as shortly reviewed in the introduction. 
We begin by formally introducing separated graphs based on \cite{Ara-Goodearl:C-algebras_separated_graphs} and \cite{Ara-Goodearl:Leavitt_path}:   

\begin{definition}
	A \emph{separated graph} is a pair $(E,C)$ where $E$ is a graph and $C=\displaystyle\cup_{v \in E^0} C_v$ in which $C_v$ is a partition of $s^{-1}(v)$ into pairwise disjoint nonempty subsets for each vertex $v$. If all the sets in $C$ are finite, we say that $(E,C)$ is a \emph{finitely separated graph}. This is automatically true when $E$ is row-finite. 
	
	The set $C$ is the \emph{trivial separation} of $E$ if $C_v=\{s^{-1}(v)\}$ for all $v \in E^0$ (in case $v$ is a sink then $s^{-1}(v) = \emptyset$ and therefore we take $C_v$ to be a empty family of subsets). In this case, $(E,C)$ is called a \emph{trivially separated graph} or a \emph{non-separated graph}. Any graph $E$ may be paired with the trivial separation and may thus be viewed as a trivially separated graph. For $X\in C_v$, we set $s(X) = v$. 
\end{definition}

We shall use the following extended notion of graph (iso)morphisms introduced in Section~\ref{sec:DirectedGraphs} to the setting of separated graphs.
%Recall that a {\it graph morphism} $f\colon E\to F$ between graphs $E$ and $F$ consists of a pair of maps $f=(f^0,f^1)$, where $f^0\colon E^0\to F^0$ and $f^1\colon E^1\to F^1$, such that $s(f^1(e))= f^0(s(e))$ and $r(f^1(e))= f^0(r(e))$ for all $e\in E^1$. The graph morphism $f= (f^0, f^1)$ is a {\it graph isomorphism} if both $f^0$ and $f^1$ are bijective maps.   

\begin{definition}
Let $(E,C)$ and $(F,D)$ be two separated graphs. An (iso)morphism $(E,C)\to (F,D)$ is a graph (iso)morphism $f: E\rightarrow F$ that permutes the separations in the sense that for each $v \in E^0 $ and $X \in C_v$ we have $f^1(X) \in D_{f^0(v)}$. The set of all automorphisms of $(E,C)$ will be denoted by $\Aut (E,C)$; this is a group under composition. 
\end{definition}  

\begin{definition}
	Let $(E,C)$ be a separated graph and $G$ be a group. An {\it action} $\alpha$ of $G$ on $(E,C)$ is a group homomorphism $\alpha\colon G \to \Aut (E,C)$. The action $\alpha $ is said to be {\it free} if the underlying action of $G$ on $E^0$ (and hence also on $E^1$) is free.
\end{definition}

Note that a (free) action of $G$ on $(E,C)$ induces a (free) action of $G$ on $C$.

\begin{example}\label{excuntzgrafoseparado}
	Consider the \emph{Cuntz} graph $A_n$ that has one vertex $v$ and $n$ loops $a_1, \ldots, a_n$ based at $v$ and
	define a separation as $D=D_v:=\{X_1,\ldots,X_n\}$ where $X_i=\{a_i\}$ are a singleton sets for all $i \in \{1, \ldots , n\}$. Thus $(A_n,D)$ is a separated graph, called the Cuntz separated graph, see Figure \ref{fig:cunztgraph} below.
	\begin{center}{
			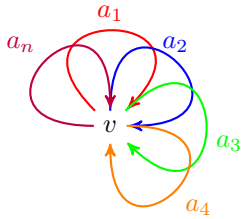
\begin{figure}[htb]
				\begin{tikzpicture}[scale=2]
					\node (v) at (1,0) {$v$};
					\Loop[dist=1cm,dir=NO,label=$a_1$,labelstyle=above,color=red](v)
					\Loop[dist=1cm,dir=NOEA,label=$a_2$,labelstyle=above,color=blue](v)
					\Loop[dist=1cm,dir=EA,label=$a_3$,labelstyle=below right,color=green](v)  \Loop[dist=1cm,dir=SOEA,label=$a_4$,labelstyle=below right,color=orange](v)
					\Loop[dist=1cm,dir=NOWE,label=$a_n$,labelstyle=above left,color=purple](v)
				\end{tikzpicture}
				\caption{The Cuntz separated graph}
				\label{fig:cunztgraph}
		\end{figure}}
	\end{center}
\end{example}			

%The next example is a classical example of separated graphs.
\begin{example}\label{exemplografoseparado}
	For all integers $1\leq m \leq n$, define the separated graph $(E(m,n),C(m,n))$ as follows: 
	\begin{enumerate}
		\item $E(m,n)^0:=\{v,w\}$ with $v \neq w$,
		\item $E(m,n)^1:=\{e_1,\ldots,e_n,f_1,\ldots,f_m\}$ ($n+m$ distinct edges),
		\item $s(e_i)=s(f_j)=v$ and $r(e_i)=r(f_j)=w$ for all $i,j$,
		\item $C(m,n)=C(m,n)_v:=\{X,Y\}$, where $X=\{e_1,\ldots,e_n\}$ and $Y=\{f_1,\ldots,f_m\}$.
	\end{enumerate}
\begin{center}{
		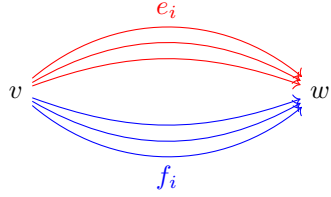
\begin{figure}[htb]
			\begin{tikzpicture}[scale=2]
				%\node (v) at (1,1)  {$v$};
				%\node (v) at (1,0) {$w$};
				\node (v) at (0,0) {$v$};
				\node (w) at (2,0) {$w$};
				\draw[->,red]  (v) to [bend left=40] node [above] {$e_i$} (w);
				\draw[->,red]  (v) to [bend left=30] (w);
				\draw[->,red]  (v) to [bend left=20] (w);
				\draw[->,blue]  (v) to [bend right=40] node [below] {$f_i$} (w);
				\draw[->,blue]  (v) to [bend right=30] (w);
				\draw[->,blue]  (v) to [bend right=20] (w);
			\end{tikzpicture}
			\caption{The separated graph $(E(m,n),C(m,n))$}
			\label{fig:separtedgraphEmn}
	\end{figure}}
\end{center}
This graph, described in Figure \ref{fig:separtedgraphEmn}, admits no free actions of $G$ unless $G$ is the trivial group. The reason is because if $g\neq 1$ then we must have $\alpha_g(v) = w$. Then we have $s(\alpha_g(e_i)) = v \neq w = \alpha_g(s(e_i))$ regardless how the action acts on edges. We will return to this example later. 
\end{example}

An action of $G$ on a separated graph $(E,C)$ yields a quotient separated graph as follows: Keep the usual quotient graph $E/G=(E^0/G,E^1/G,s_G,r_G)$. We are going to define a separation on this graph. For each $X \in C_v$, define $X_G= \{[e]: e\in X\}\subseteq s_G^{-1}([v])$, that is, $X_G$ is the set of equivalence classes of edges which belong to the respective set $X$. The union of $X_G$, for $X\in C_v$, equals  $s_G^{-1}([v])$ since these sets $X$ form a partition of $s^{-1}(v)$. Note that $X_G=Y_G$ if and only if $[X]=[Y]$, where $[X]$ denotes the class of $X\in C$ under the induced action of $G$ on $C$. So, these subsets $X_G$ determine a separation $$C/G:=\displaystyle\bigcup_{[v] \in E^0/G} (C/G)_{[v]}$$ for $E/G$, where $(C/G)_{[v]}:=\{X_G: X\in C_v\}$. Then $(E/G,C/G)$ is called \emph{the quotient separated graph}.

Now, let $(E,C)$ be a separated graph, let $c:E^1 \rightarrow G$ be a labeling function and keep the usual skew product graph $E \times_c G$. For each $X \in C_v$ and $g \in G$ define $X_g:=X \times \{g\} = \{(e,g) ~|~ e \in X\}$ which is a partition of $s^{-1}(v,g)$.  The subsets $X_g$ determine a separation $$C \times_c G := \displaystyle\bigcup_{(v,g) \in E^0 \times G} C \times_c G_{(v,g)}$$ with $C \times_c G_{(v,g)} := \{X_g ~\mid~ X \in C_v\}$. Then $(E\times_c G,C \times_c G)$ is called \emph{the skew product separated graph}.

\begin{theorem}[Gross-Tucker for separated graphs]\label{theo:Gross-Tucker-Separated}
	Suppose a group $G$ acts freely on a separated graph $(E,C)$. Then there is a function $c: E^1/G \to G$ and a $G$-equivariant isomorphism of separated graphs:
	$$(E,C) \cong (E/G\times _cG,C/G \times _c G).$$
\end{theorem}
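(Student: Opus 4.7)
The strategy is to adapt the classical Gross–Tucker construction for ordinary graphs and then verify that the resulting isomorphism is compatible with the separations. First I would fix a vertex transversal $T^0 \subseteq E^0$ meeting each $G$-orbit exactly once; because the action on $E^0$ is free, the composition $T^0 \hookrightarrow E^0 \twoheadrightarrow E^0/G$ is a bijection, which I use to identify $(E/G)^0$ with $T^0$. Since $s$ is equivariant and the action on vertices is free, $T^1 := s^{-1}(T^0) \subseteq E^1$ is a transversal for the $G$-action on edges, giving an analogous identification $(E/G)^1 \cong T^1$; similarly the family $\{X \in C : s(X) \in T^0\}$ is a transversal for the induced $G$-action on $C$, identifying $C/G$ with this family.

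Next I would construct the labeling. For each $e \in T^1$, freeness on $E^0$ gives a unique $g_e \in G$ such that $g_e^{-1} \cdot r(e) \in T^0$, and I set $c([e]) := g_e$. I then define $\varphi \colon E \to E/G \times_c G$ by $\varphi^0(v) := (v_0, g)$ and $\varphi^1(e) := (e_0, g)$, where $v = g \cdot v_0$ and $e = g \cdot e_0$ are the (unique) decompositions with $v_0 \in T^0$ and $e_0 \in T^1$. Compatibility with $s$ is immediate since $s(e_0) \in T^0$. Compatibility with $r$ amounts to: if $r(e_0) = g_{e_0} \cdot w$ with $w \in T^0$, then $r(e) = g g_{e_0} \cdot w$, and under the identifications above this matches the skew-product formula $r(e_0, g) = (w,\, g \cdot c([e_0]))$. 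Both $G$-equivariance and bijectivity of $\varphi$ follow at once from the uniqueness of the decomposition $v = g \cdot v_0$.

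Finally, I would check that $\varphi$ permutes the separations. Given $v = g \cdot v_0 \in E^0$ (with $v_0 \in T^0$) and $X \in C_v$, freeness of the induced action on $C$ produces a unique $X_0 \in C_{v_0}$ with $X = g \cdot X_0$, and then
\begin{equation*}
\varphi^1(X) = \{(e, g) : e \in X_0\},
\end{equation*}
which under the identification $C/G \leftrightarrow \{X_0 \in C : s(X_0) \in T^0\}$ is precisely $(X_0)_g \in (C/G \times_c G)_{(v_0, g)} = (C/G \times_c G)_{\varphi^0(v)}$, as required. No step presents a genuine obstacle; the only delicate point is to juggle carefully the three parallel identifications $T^0 \leftrightarrow (E/G)^0$, $T^1 \leftrightarrow (E/G)^1$, and $\{X \in C : s(X) \in T^0\} \leftrightarrow C/G$, since $C/G$ and $C/G \times_c G$ were originally defined in terms of orbit classes rather than chosen representatives.
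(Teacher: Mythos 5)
Your proof is correct. The only difference from the paper's argument is one of packaging: the paper invokes the classical Gross--Tucker theorem for ordinary graphs as a black box, obtaining the labeling $c$ and a $G$-equivariant graph isomorphism $\phi\colon E/G\times_c G\to E$ in one stroke, and then devotes its entire proof to the single new point, namely that $\phi$ permutes the separations. You instead reconstruct the classical argument explicitly via a vertex transversal $T^0$, the induced edge transversal $T^1=s^{-1}(T^0)$, and the labeling $c([e])=g_e$ determined by $g_e^{-1}\cdot r(e)\in T^0$ -- which is precisely the construction hiding inside the cited theorem -- and you build the map in the opposite direction, $E\to E/G\times_c G$. Your treatment of the separations is the same computation as the paper's: for $X\in C_v$ with $v=g\cdot v_0$, the image of $X$ is $(g^{-1}\cdot X)\times\{g\}$, which lies in the separation of the skew product at $\varphi^0(v)$. (One small remark: to produce $X_0$ with $X=g\cdot X_0$ you only need that the action permutes the separations, i.e.\ $g^{-1}\cdot X\in C_{v_0}$; freeness of the induced action on $C$ is not what does the work there, though freeness on vertices is what makes $g$ unique.) What your version buys is a self-contained proof with an explicit formula for $c$; what the paper's version buys is brevity and a clean isolation of the genuinely new step.
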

\begin{proof}
	From the Gross-Tucker Theorem for non-separated graphs  \cite[Theorem 2.2.2]{Gross-Tucker:topolgical-graph-theory}, we already have a labeling function $c: E^1/G \to G$ and a $G$-equivariant isomorphism $\phi: E/G \times_c G \to E$. We only need to show that this isomorphism permutes the separations.  
	
	Consider $x\in E^0/G$ and a base vertex $v_x$ of $x$ in $E^0$. We need to prove that for each $(x,g) \in E^0/G \times_c G$ and $Y \in (C/G \times_c G)_{(x,g)}$ we have $\phi(Y) \in C_{\alpha_g(v_x)}$.  By definition of the separations we have $(C/G \times_c G)_{(x,g)} = (C/G)_{x} \times \{g\}$ and $(C/G)_{x}=\{ X_G : X\in C_{v_x}\}$.  For all $(y,g) \in X_G\times \{ g\}$, where $X\in C_{v_x}$, we have $\phi(y,g) = \alpha_g(e_y)$, where $e_y\in X$ satisfies $[e_y]=y$. It follows that 
	$\phi (X_G\times \{g\})= \alpha_g (X) \in C_{\alpha_g(v_x)}$, as desired. 
	\end{proof}

The above result extends the original Gross-Tucker Theorem \cite[Theorem 2.2.2]{Gross-Tucker:topolgical-graph-theory} for ordinary graphs and it is strongly related to a similar result obtained for labeled graphs in \cite{Bates-Pask-Willis:Group_actions}. 

\begin{example}\label{exgrafodecayleyseparado} 
Let $G$ be a group with generators $g_1, \ldots, g_n$. The \emph{Cayley graph} of $G$ with respect to $g_1, \ldots, g_n$ is the graph $E_G:=(E_G^0,E_G^1,s,r)$, where $E_G^0 = G$, $E_G^1= G \times \{ g_1,\ldots, g_n\}$ and the source and range maps are defined by $s(h,g_i) = h  \text{ and } r(h,g_i) = hg_i$ for all $i \in \{1,\ldots,n\}$. This graph carries a natural free action of $G$ by left translations:  $g\cdot (h,g_i)=(gh,g_i)$. By the Gross-Tucker Theorem, it is a skew product graph. Indeed, the quotient $E_G/G$ identifies with the Cuntz graph $A_n$. Using the labeling function $c: A_n^1 \rightarrow G$ with $c(a_i) = g_i$ for all $i$, we get a $G$-equivariant isomorphism of graphs 
	$$E_G \cong (E_G/G) \times _c G \cong A_n \times_c G.$$
Now we introduce a separation  on $E_G$  as follows: $C_G = \displaystyle\cup_{g \in G} (C_G)_g$ where $(C_G)_g := \{X_1^g,\ldots,X_n^g\}$ in which $X_i^g := \{g\} \times \{g_i\}$ for every $i \in \{1,\ldots,n\}$. This yields a separated graph $(E_G,C_G)$, called the Cayley separated graph. 	

The canonical free action on $E_G$ permutes the separations and hence yields a free action on $(E_G,C_G)$. By the separated version of the Gross-Tucker Theorem~\ref{theo:Gross-Tucker-Separated}, $(E_G,C_G)$ is isomorphic to a separated skew product graph. Indeed, we can identify it with $(A_n \times_c G, D \times_c G)$, where $D$ is the separation on $A_n$  defined in Example~\ref{excuntzgrafoseparado}.
\end{example}

\subsection{$C^*$-algebras of separated graphs and duality theorems}\label{sec:duality-full-separated}

In this section, we are going to formally define the separated graph \cstar{}algebras based on \cite{Ara-Goodearl:C-algebras_separated_graphs} and \cite{Ara-Goodearl:Leavitt_path}, and extend some of the duality theorems from ordinary graphs as explained in the introduction. %As already mentioned, we do not make any assumptions of row-finiteness in our graphs. 

\begin{definition}\label{defC*grafosep}
The \emph{Leavitt path algebra} of a separated graph $(E,C)$ is the universal complex *-algebra $L(E,C)$ with generators $\{P_v\}_{v \in E^0}$ of mutually orthogonal projections and $\{S_e\}_{e \in E^1}$ of partial isometries subject to the following relations: 
	\begin{enumerate}
		\item $P_{s(e)}S_e = S_eP_{r(e)} = S_e$ for all $e \in E^1$,
		\item $S_e^*S_f = \delta_{e,f} P_{r(e)}$ for all $e,f \in X$, $X \in C$,
		\item $P_{v} = \displaystyle\sum_{e \in X} S_eS_e^*$ for every finite subset $X \in C_v$.
	\end{enumerate}
\end{definition}

\begin{definition}
	The \emph{\cstar{}algebra} of a separated graph $(E,C)$ is the universal \cstar{}algebra $\CEC$ with generators $\{P_v,S_e ~|~ v \in E^0, e \in E^1 \}$ subject to the relations (1)-(3) of Definition \ref{defC*grafosep}. The collection $\{P_v,S_e ~|~ v \in E^0, e \in E^1 \}$ is called a \emph{universal Cuntz-Krieger $(E,C)$-family}. In other words, $\CEC$ is the enveloping \cstar{}algebra of $L(E,C)$. 
\end{definition}

%\begin{remark}
%	The \cstar{}algebra $\CEC$ exists because the generating set consists of projections and partial isometries. 
%\end{remark}

\begin{definition}
	For two non-trivial paths $\mu,\nu \in \text{Path}(E)$ with $s(\mu)=s(\nu)=v$ we say that $\mu$ and $\nu$ are \emph{$C$-separated} if the initial edges of $\mu$ and $\nu$ belong to different sets $X,Y \in C_v$. 
\end{definition} 

\begin{definition}
	For each finite $X \in C$, we select an edge $e_X \in X$. Let $\mu,\nu \in \text{Path}(E)$ be two paths such that $r(\mu)=r(\nu)$ and let $e$ and $f$ be the terminal edges of $\mu$ and $\nu$, respectively. The path $\mu \nu^*$ is said to be \emph{reduced} if $(e,f) \neq (e_X,e_X)$ for every finite $X \in C$. In case either $\mu$ or $\nu$ has length zero then $\mu\nu^*$ is automatically reduced.  
\end{definition}

\begin{remark}
	Every time we use the notion of reduced paths the choice above is applied.
\end{remark}

We will repeatedly use the following result:

\begin{proposition}\cite[Corollary 2.8]{Ara-Goodearl:Leavitt_path}\label{basegrafosseparados}
	Let $(E,C)$ be a separated graph. Then the set of elements of the form $$S_{\mu_1}S_{\nu_1}^*S_{\mu_2}S_{\nu_2}^* \ldots S_{\mu_n}S_{\nu_n}^*, ~~ \mu_i,\nu_i \in \text{Path}(E)$$ such that  $\nu_i$ and $\mu_{i+1}$ are $C$-separated paths for all $i \in \{1,\ldots,n-1\}$ and $\mu_i\nu_i^*$ is reduced for all $i \in \{1,\ldots,n\}$ forms a linear basis of $L(E,C)$. We call $\mu_1\nu_1^* \ldots \mu_n\nu_n^*$ a $C$-separated reduced path. 
	\end{proposition}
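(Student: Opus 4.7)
The plan is to establish spanning and linear independence separately, with the main work concentrated on the latter.

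For spanning, I would start from an arbitrary monomial in the generators $P_v, S_e, S_e^*$ and rewrite it into the stated normal form by repeated application of the three defining relations. Relation (1) absorbs the vertex projections into adjacent $S_e$ or $S_e^*$ factors, so after this step the monomial is an alternating word in the $S_e$'s and $S_f^*$'s (possibly with a dangling projection at either end). Then relation (2) is used on every internal occurrence of $S_e^*S_f$: if $e,f$ lie in the same block $X\in C$ the pair collapses to $\delta_{e,f}P_{r(e)}$, which can be re-absorbed via (1), shortening the word; if $e,f$ lie in different blocks $X\neq Y$ of $C_{s(e)}=C_{s(f)}$ then the adjacent paths terminating in $e^*$ and beginning with $f$ are $C$-separated, and we just leave them alone. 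This produces an expression of the required alternating form $S_{\mu_1}S_{\nu_1}^*\cdots S_{\mu_n}S_{\nu_n}^*$ with the $C$-separation condition between $\nu_i$ and $\mu_{i+1}$. To enforce the reduced condition at each junction $\mu_i\nu_i^*$, I would use relation (3) as a rewriting rule: whenever the last edge of $\mu_i$ and of $\nu_i$ coincide with the distinguished edge $e_X$ of some finite block $X\in C$, I rewrite $S_{e_X}S_{e_X}^*=P_{s(X)}-\sum_{e\in X,\,e\ne e_X}S_eS_e^*$ and expand, cancelling by (2). A termination argument (e.g.\ induction on a suitable length/lex ordering) shows this process ends, giving a finite linear combination of reduced $C$-separated monomials.

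For linear independence, the natural route is to exhibit a linear functional (or more ambitiously, a faithful representation) on $L(E,C)$ that pairs non-trivially with exactly one of the candidate basis elements and vanishes on the others. Concretely, I would consider, for each $v\in E^0$, the path-space representation on the free module with basis the $C$-separated reduced paths ending at $v$, where $S_e$ acts by prepending $e$ (with reduction imposed when the prepended edge creates a non-reduced junction via relation (3) expanded as above) and $S_e^*$ acts by dualising. One must verify that these operators satisfy (1)-(3); once they do, the universal property of $L(E,C)$ gives a representation, and the image of a distinguished basis vector under the monomial $S_{\mu_1}S_{\nu_1}^*\cdots S_{\mu_n}S_{\nu_n}^*$ can be tracked to recover the monomial, establishing linear independence.

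The main obstacle is the construction and verification of this normal-form representation. The difficulty is essentially combinatorial: one has to define the action of $S_e^*$ on a $C$-separated reduced word in a way that is consistent with relation (3), which forces non-trivial re-expansions when the leading factor matches a distinguished edge $e_X$. The cleanest way to handle this is to work instead with the rewriting system implicit in the spanning argument above, prove it is confluent and terminating, and read off a unique normal form; confluence guarantees that the normal form depends only on the element of $L(E,C)$, which is exactly linear independence of the normal monomials. Once both parts are in hand, the spanning set and the independence combine to show that the set of reduced $C$-separated monomials $S_{\mu_1}S_{\nu_1}^*\cdots S_{\mu_n}S_{\nu_n}^*$ is a linear basis of $L(E,C)$, as claimed.
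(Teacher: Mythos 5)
The paper does not prove this statement itself: it quotes it from \cite[Corollary~2.8]{Ara-Goodearl:Leavitt_path}. The proof there runs through a different door than yours. One first shows that $L(E,C)$ is the amalgamated free product $\freeprod{C_0(E^0)}{L(E_X)}$ of the ordinary Leavitt path algebras of the subgraphs $E_X$ over the vertex algebra (the algebraic counterpart of Proposition~\ref{propcoproduto}), one takes the known monomial basis $\{S_\mu S_\nu^*\}$ of each $L(E_X)$, and one invokes the general normal-form theorem for amalgamated free products of algebras that are free as modules over the base with a chosen complement. The ``reduced'' condition --- excluding terminal pairs $(e_X,e_X)$ --- is exactly the choice of a basis of a complement of $C_0(E^0)$ inside each $L(E_X)$, since relation~(3) makes $S_{e_X}S_{e_X}^*$ linearly dependent on $P_v$ and the other $S_eS_e^*$. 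Your approach instead treats $L(E,C)$ as a single presented algebra and attacks it with a rewriting system. The spanning half of your argument is essentially the same combinatorics in both approaches and is fine, modulo the (real but routine) bookkeeping that shortening a junction $\mu_i\nu_i^*$ can trivialize $\nu_i$ and force a re-merge with $\mu_{i+1}$.

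The gap is in the independence half, which is where all the content of the proposition lives, and your proposal defers it rather than resolving it. Both of your suggested devices --- the path-space representation and the confluence of the rewriting system --- reduce to the same unverified claim: that the prescribed normal-form action of $S_e^*$ (equivalently, the resolution of the overlap ambiguities between relation~(2) and relation~(3)) is consistent. Concretely, the inclusion ambiguity between $S_{e}^*\bigl(\sum_{f\in X}S_fS_f^*\bigr)=S_e^*P_v$ for $e\in X$ and the reductions $S_e^*S_f=\delta_{e,f}P_{r(e)}$, and the overlaps arising when two distinct finite blocks $X,Y\in C_v$ each impose $P_v=\sum_{e\in X}S_eS_e^*=\sum_{f\in Y}S_fS_f^*$, must be checked to resolve to a common normal form; this is a genuine diamond-lemma verification, not a formality, and without it neither the well-definedness of your representation nor the uniqueness of normal forms is established. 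The amalgamated-free-product route avoids this entirely by outsourcing the independence to a general theorem whose hypotheses (each factor free over the base with a specified complement) are easy to check for ordinary Leavitt path algebras. So your strategy is viable and standard, but as written it stops exactly at the point where the proof has to be done.
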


\begin{example}\label{excanonicografoseparado1}
	Consider the separated graph $(E(m,n),C(m,n))$ seen in Example \ref{exemplografoseparado}. In this example an element of the form $S_{e_i}^*S_{f_j}$ is not zero because $e_i$ and $f_j$ are in different sets for every $i,j$. In the context of ordinary (unseparated) graphs, all these elements are required to be zero. It is proved in \cite{Ara-Goodearl:C-algebras_separated_graphs} that 
$$C^*(E(m,n),C(m,n)) \cong M_{n+1}(U_{m,n})\cong M_{m+1}(U_{m,n}),$$ 
where $U_{m,n}$ is the universal \cstar{}algebra generated by the entries of a unitary $m \times n$ matrix, originally studied by Brown in \cite{Brown:Ext_free} and more generally by McClanahan in \cite{McClanahan:K-theory}. 
	For the special case of $m=1$ and $n\geq 2$ we get the Cuntz \cstar{}algebra $U_{1,n} \cong \mathcal{O}_n$ and consequently 
	$$C^*(E(1,n),C(1,n)) \cong M_{2}(\mathcal{O}_n).$$
In addition, $C^*(E(1,1),C(1,1)) \cong M_{2}(C(\mathbb{T}))$.
\end{example}

\begin{example}\label{excanonicografoseparado}
Another important basic example is given by the Cuntz separated graph $(A_n,D)$ from Example \ref{excuntzgrafoseparado}. In this case $C^*(A_n,D) \cong C^*(\mathbb{F}_n)$ via an isomorphism that identifies the generator $S_{a_i}\in C^*(A_n,D)$ with $a_i\in \mathbb{F}_n$ viewed as an element of $C^*(\mathbb{F}_n)$. Here $\mathbb{F}_n$ denotes the free group generated by the edges $a_1,\ldots,a_n$. More generally, we can consider any graph $E$ with only one vertex $v$ and the separation $C$ where all sets are singletons $X=\{e\}$ with $e\in E^1$. In this case $C^*(E,C)\cong C^*(\Free)$, the full \cstar{}algebra of the free group $\Free=\Free_{E^1}$ on the set $E^1$.
\end{example}

In order to deal with actions of groups, it is interesting to work with a class of normal forms which is closed under the action of the group. This description uses only purely multiplicative expressions and is analogous to the one considered in \cite{meakin-milan-wang-2021}, see also \cite{fan-wang-2021}.

 \begin{definition}
 	\label{def:weaklyreducedpaths} Let $(E,C)$ be a separated graph.
 	 A path
$\mu= \mu_1 \nu_1^* \cdots \mu_n\nu_n^*\in \text{Path}(\hat{E})$ in the extended graph $\hat{E}$ is said to be a {\it $C$-separated weakly reduced path} in case it is $C$-separated and does not contain subpaths of the form $ee^*$, where $\{e\}\in C$.
\end{definition}

    Observe that if $(A_n,D)$ is the Cuntz graph as in Example \ref{excuntzgrafoseparado}, then the set $\mathcal S$ of $D$-separated weakly reduced paths is indeed the set of reduced words of the free group, in the usual sense.   

\begin{lemma}
	If $G$ acts on a separated graph $(E,C)$, then there is an induced action $\alpha: G \rightarrow Aut(\CEC)$ such that $\alpha_g(S_e) = S_{g \cdot e}$ e $\alpha_g(P_v) = P_{g \cdot v}$ for all $e \in E^1 $ and $v\in E^0$.
\end{lemma}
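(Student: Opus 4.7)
The plan is to invoke the universal property of $C^*(E,C)$: for each fixed $g\in G$, I want to build the automorphism $\alpha_g$ by showing that the family
\[
\{P'_v \defeq P_{g\cdot v},\ S'_e\defeq S_{g\cdot e}: v\in E^0,\ e\in E^1\}
\]
satisfies the three Cuntz--Krieger relations of Definition~\ref{defC*grafosep}, and then check that the assignment $g\mapsto \alpha_g$ is a group homomorphism into $\Aut(C^*(E,C))$.

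First I would verify the three relations. Relation (1) follows directly from the fact that the maps $s,r\colon E^1\to E^0$ are $G$-equivariant: $s(g\cdot e)=g\cdot s(e)$ and $r(g\cdot e)=g\cdot r(e)$, so
\[
P'_{s(e)}S'_e=P_{s(g\cdot e)}S_{g\cdot e}=S_{g\cdot e}=S'_e,
\]
and similarly $S'_e P'_{r(e)}=S'_e$. For relation (2), the key point is that the action of $G$ on $(E,C)$ permutes the separation, so for every $X\in C$ we have $g\cdot X\in C$, and for $e,f\in X$ the pair $g\cdot e, g\cdot f$ lies in $g\cdot X$; therefore
\[
(S'_e)^*S'_f=S_{g\cdot e}^* S_{g\cdot f}=\delta_{g\cdot e,g\cdot f}P_{r(g\cdot e)}=\delta_{e,f}P'_{r(e)}.
\]
For relation (3), if $X\in C_v$ is finite then $g\cdot X\in C_{g\cdot v}$ is finite of the same cardinality, hence
\[
\sum_{e\in X}S'_e (S'_e)^*=\sum_{e'\in g\cdot X}S_{e'}S_{e'}^*=P_{g\cdot v}=P'_v.
\]

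By the universal property, there is a unique \Star{}homomorphism $\alpha_g\colon C^*(E,C)\to C^*(E,C)$ sending $P_v\mapsto P_{g\cdot v}$ and $S_e\mapsto S_{g\cdot e}$. Next I would show that $g\mapsto \alpha_g$ is a group homomorphism: the composition $\alpha_g\circ\alpha_h$ is a \Star{}homomorphism sending $P_v$ to $P_{gh\cdot v}$ and $S_e$ to $S_{gh\cdot e}$, so uniqueness in the universal property forces $\alpha_g\circ\alpha_h=\alpha_{gh}$. Clearly $\alpha_1=\id$, hence each $\alpha_g$ admits the two-sided inverse $\alpha_{g^{-1}}$ and is therefore a \Star{}automorphism.

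I do not expect any serious obstacle: the argument is a textbook application of the universal property. The only subtle point is that relations (2) and (3) rely on the hypothesis (built into the definition of an action on a separated graph) that the $G$-action permutes the sets in $C$; without this, the translated family would fail to be a Cuntz--Krieger $(E,C)$-family.
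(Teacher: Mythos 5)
Your proposal is correct and follows essentially the same route as the paper: verify that $\{P_{g\cdot v}, S_{g\cdot e}\}$ is a Cuntz--Krieger $(E,C)$-family (using that the action permutes the sets of the separation for relations (2) and (3)), invoke the universal property to get $\alpha_g$, and check multiplicativity on generators. No gaps; the paper's proof is the same argument.
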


\begin{proof}
	Fix $g \in G$ and define $P_v' := P_{g\cdot v}$ and $S_e' := S_{g\cdot e}$ for all $v \in E^0$ and $e \in E^1$, where $\cdot$ denotes the action of $G$ on $(E,C)$. We claim that $\{P_v',S_e'\}$ is a Cuntz-Krieger $(E,C)$-family in $\CEC$. Note that for all $v,w \in E^0$ we have $g\cdot v = g \cdot w $ if and only if $v=w$. Similarly for edges. It is trivial to see that $\{P_v'\}$ is a family of mutually orthogonal projections satisfying Definition~\ref{defC*grafosep}(1). To see condition (2), note that for $e,f \in X$, $X \in C_v$ we have 
	$$	S_e'^*S_f' =  S_{g\cdot e}^*S_{g\cdot f} = \delta_{g\cdot e,g\cdot f} P_{r(g\cdot e)} = \delta_{e,f} P_{gr(e)} = \delta_{e,f} P_{r(e)}'.$$
	To see condition (3) note that for all finite $X \in C_v$ we have $g \cdot X \in C_{g \cdot v} $ for all $g\in G$. Then we have    
	$$\displaystyle\sum_{e \in X}S_e'S_e'^* =  \displaystyle\sum_{e \in X}S_{g\cdot e}S_{g\cdot e}^* = \displaystyle\sum_{e \in g\cdot X}S_{e}S_{e}^* = P_{g\cdot v} = P_v' .$$ The universal property of $\CEC$ yields a *-homomorphism $\alpha_g: \CEC \to \CEC$ such that $\alpha_g(S_e) = S_{g\cdot e}$ and $\alpha_g(P_v) = P_{g\cdot v}$ for all $e \in E^1 $ and $v\in E^0$. It is straightforward to check that $\alpha_g^{-1} = \alpha_{g^{-1}}$ and, checking on generators it also follows that $\alpha_g \circ \alpha_h = \alpha_{gh}$ for every $g,h \in G$. Since $\alpha_1 = id$ it follows that $\alpha$ is an action, as desired.  
	\end{proof}

Now we are ready to extend some of the duality results from ordinary graphs to separated graph \cstar{}algebras.

\begin{theorem}\label{isomordeltamaximal}
Let $(E,C)$ be a separated graph with universal Cuntz-Krieger $(E,C)$-family $\{S_e,P_v\}_{e\in E^1, v\in E^0}$.
Given a labeling function $c\colon E^1\to G$ on $(E,C)$, there is a unique  coaction $\delta_c\colon C^*(E,C)\to C^*(E,C)\otimes C^*(G)$ satisfying
	$$\delta_c(P_v)=P_v\otimes 1\quad\mbox{and}\quad \delta_c(S_e)=S_e\otimes c(e)$$
	for all $v\in E^0$ and $e\in E^1$. Moreover, the coaction $\delta_c$ is maximal, that is, we have a canonical isomorphism
	$$C^*(E,C)\rtimes_{\delta_c} G \rtimes_{\widehat{\delta_c}}G\cong C^*(E,C)\otimes \K(\ell^2 G).$$
\end{theorem}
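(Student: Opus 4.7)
My plan is to construct $\delta_c$ from the universal property of $\CEC$ and then establish maximality by invoking the lifting criterion to the maximal tensor product recalled in Section~\ref{sec:Preliminaries}.

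For the construction, I would consider the elements $P_v':=P_v\otimes 1$ and $S_e':=S_e\otimes c(e)$ in $\CEC\otimes\CG$ and verify that they form a Cuntz--Krieger $(E,C)$-family. The only relation of Definition~\ref{defC*grafosep} requiring any care is (2): for $e,f\in X\in C_v$,
\begin{equation*}
(S_e')^*S_f'=S_e^*S_f\otimes c(e)^{-1}c(f)=\delta_{e,f}\,P_{r(e)}\otimes 1=\delta_{e,f}P_{r(e)}',
\end{equation*}
using $c(e)^{-1}c(f)=1$ precisely when $e=f$. The universal property then yields the unique $*$-homomorphism $\delta_c$ behaving as prescribed on generators.

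The coaction axioms will follow by inspection on generators. Both $(\delta_c\otimes\id)\circ\delta_c$ and $(\id\otimes\delta_G)\circ\delta_c$ send $S_e\mapsto S_e\otimes c(e)\otimes c(e)$ and $P_v\mapsto P_v\otimes 1\otimes 1$, using $\delta_G(c(e))=c(e)\otimes c(e)$. For nondegeneracy I would observe that every monomial $T=S_{\mu_1}S_{\nu_1}^*\cdots S_{\mu_n}S_{\nu_n}^*$ in the generators is homogeneous, with $\delta_c(T)=T\otimes c(T)$ for the group element $c(T)\in G$ obtained by extending $c$ multiplicatively. Consequently $T\otimes h=\delta_c(T)(1\otimes c(T)^{-1}h)$ lies in $\delta_c(\CEC)(1\otimes\CG)$ for every $h\in G$, and since such monomials span a dense subalgebra of $\CEC$, the required density follows.

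For maximality, my plan is to apply the Buss--Echterhoff lifting criterion (Theorem~5.1 of \cite{Buss-Echterhoff:Maximality}) cited in Section~\ref{sec:Preliminaries}: it suffices to produce a $*$-homomorphism $\tilde\delta_c\colon\CEC\to\CEC\otimes_{\max}\CG$ lifting $\delta_c$ through the canonical quotient $\CEC\otimes_{\max}\CG\twoheadrightarrow\CEC\otimes\CG$. The construction of $\tilde\delta_c$ is an exact replica of that of $\delta_c$: the canonical images of $\CEC$ and $\CG$ commute inside $\CEC\otimes_{\max}\CG$, the group elements $c(e)$ remain unitaries there, and the Cuntz--Krieger relations are purely algebraic identities involving only finite sums, products, and involutions, so they transfer verbatim from the minimal to the maximal tensor product. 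The universal property of $\CEC$ then produces $\tilde\delta_c$. The asserted isomorphism $\CEC\rtimes_{\delta_c}G\rtimes_{\widehat{\delta_c}}G\cong\CEC\otimes\K(\ell^2G)$ is then immediate from the definition of maximality. I anticipate no genuine obstacle: the whole argument hinges on the algebraic nature of the Cuntz--Krieger relations together with the clean lifting criterion for maximal coactions.
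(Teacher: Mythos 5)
Your proposal is correct and follows essentially the same route as the paper's proof: both verify that $\{P_v\otimes 1, S_e\otimes c(e)\}$ is a Cuntz--Krieger $(E,C)$-family to obtain $\delta_c$ from the universal property, check the coaction identity and nondegeneracy on the homogeneous monomials of Proposition~\ref{basegrafosseparados}, and deduce maximality by running the identical construction in $\CEC\otimes_{\max}\CG$ and invoking \cite{Buss-Echterhoff:Maximality}*{Theorem~5.1}.
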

\begin{proof}
To prove existence and uniqueness of $\delta_c$, it is enough to check that the family  $\{P_v \otimes 1,S_e \otimes c(e)\}_{v\in E^0, e\in E^1}$ satisfies conditions (1)-(3) of Definition~\ref{defC*grafosep}. 
It is straightforward to check that $\{P_v \otimes 1\}$ are mutually orthogonal projections satisfying condition (1). Condition (2) also follows easily since
$$(S_e \otimes c(e))^*(S_f \otimes c(f)) = (S_e^* \otimes c(e)^{-1})(S_f \otimes c(f))= S_e^*S_f \otimes c(e)^{-1} c(f) = \delta_{e,f} P_{r(e)} \otimes 1. $$
For the last condition, note that for all finite $X \in C_v$ we have 
	\begin{align*}
		\displaystyle\sum_{e \in X} (S_e  & \otimes c(e))(S_e \otimes c(e))^* = \displaystyle\sum_{e \in X} (S_e \otimes c(e))(S_e^* \otimes c(e)^{-1}) \\
		&=\displaystyle\sum_{e \in X} S_eS_e^* \otimes c(e)c(e)^{-1} 
=\left(\displaystyle\sum_{e \in X} S_eS_e^*\right) \otimes 1 =P_v \otimes 1.
\end{align*}	
So the universal property yields a *-homomorphism $\delta_c: \CEC \rightarrow \CEC \otimes \CG $ satisfying the properties of the statement. To prove the coaction identity, we only need to check it on the generators
$P_v$, $S_e$. On the $P_v$'s it acts trivially $P_v\mapsto P_v\otimes 1$, so nothing happens. And on the $S_e$'s it acts by $S_e\mapsto S_e\otimes c(e)$, in which case  
\begin{comment}
	\begin{align*}
		(\delta_c \otimes\id_G) \circ \delta_c (P_v) &= \delta_c \otimes\id_G(P_v \otimes 1) = \delta_c(P_v) \otimes 1 = (P_v \otimes 1)\otimes 1 \\
		&= P_v \otimes (1\otimes 1) = P_v \otimes \delta_G(1) = id \otimes \delta_G(P_v \otimes 1) \\
		&= (\id_ \otimes \delta_G) \circ \delta_c(P_v)
	\end{align*} 
	and
\end{comment}
	\begin{align*}
		(\delta_c \otimes\id_G) \circ \delta_c (S_e) &= (\delta_c \otimes\id_G)(S_e \otimes c(e)) = \delta_c(S_e) \otimes c(e)\\ 
& = (S_e \otimes c(e))\otimes c(e) 
= S_e \otimes (c(e)\otimes c(e))\\ &  = S_e \otimes \delta_G(c(e)) = id \otimes \delta_G(S_e \otimes c(e)) \\
		&= (\id_ \otimes \delta_G) \circ \delta_c(S_e).
	\end{align*} 
To finish the proof that $\delta_c$ is a coaction, we need to check its nondegeneracy, that is,  $\delta_c(\CEC)(1 \otimes \CG)$ spans a dense subspace of $\CEC \otimes \CG$. But by Proposition~\ref{basegrafosseparados}, the elements of the form $S_{\mu_1}S_{\nu_1}^* \ldots S_{\mu_n}S_{\nu_n}^* \otimes g$, where $\mu_1\nu_1^* \cdots \mu_n\nu_n^*$ is a $C$-separated reduced path and $g\in G$, span a dense subspace of $\CEC \otimes \CG$. And for $h:=c(\mu_1)c(\nu_1)^{-1}\ldots c(\mu_n)c(\nu_n)^{-1}$ we observe that 
	\begin{align*}
		S_{\mu_1}S_{\nu_1}^* \ldots S_{\mu_n}S_{\nu_n}^* \otimes g &= (S_{\mu_1}S_{\nu_1}^* \ldots S_{\mu_n}S_{\nu_n}^* \otimes h)(1 \otimes h^{-1}g)\\&= \delta_c(S_{\mu_1}S_{\nu_1}^* \ldots S_{\mu_n}S_{\nu_n}^*)(1 \otimes h^{-1}g).
	\end{align*}
Thus  $\delta_c(\CEC)(1 \otimes \CG)$ contains all elements of the form $S_{\mu_1}S_{\nu_1}^* \ldots S_{\mu_n}S_{\nu_n}^* \otimes g$ which span a dense subspace of $\CEC \otimes \CG$. Therefore $\delta_c$ is a coaction, as desired.

Finally, to see that $\delta_c$ is maximal, we are going to use \cite{Buss-Echterhoff:Maximality}*{Theorem~5.1}, that is, we prove that $\delta_c$ admits a lift to a homomorphism $\tilde\delta_c\colon C^*(E,C)\to C^*(E,C)\otimes_\max C^*(G)$. Indeed, the same proof above that we used for $\delta_c$ also shows that there is a (unique) \Star{}homomorphism $\tilde\delta_c\colon C^*(E,C)\to C^*(E,C)\otimes_\max C^*(G)$ satisfying $\tilde\delta_c(P_v)=P_v\otimes 1$ and $\tilde\delta_c(S_e)=S_e\otimes c(e)$. This is a lift for $\delta_c$ and it is therefore a maximal coaction.
\end{proof}

\begin{theorem}\label{isomorgrafosskewseparproducruzado}
	With notations as above, there is a canonical isomorphism
	$$C^*(E\times_c G,C\times_c G)\cong C^*(E,C)\rtimes_{\delta_c} G .$$
	Under this isomorphism, the $G$-action $\gamma$ on $C^*(E\times_c G,C\times_c G)$ induced by the translation $G$-action on $(E\times_c G,C\times_c G)$
	corresponds to the dual action $\widehat{\delta_c}$ on $C^*(E,C)\rtimes_{\delta_c}G$.
\end{theorem}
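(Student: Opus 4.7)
The plan is to construct mutually inverse \Star{}homomorphisms $\Phi\colon C^*(E\times_c G, C\times_c G) \to C^*(E,C)\rtimes_{\delta_c}G$ and $\Psi$ in the opposite direction via the universal properties of both algebras, and then verify that $\Phi$ intertwines the translation action $\gamma$ with the dual action $\widehat{\delta_c}$. This extends the strategy of \cite{Kumjian-Pask:C-algebras_directed_graphs} and \cite{Kaliszewski-Quigg-Raeburn:Skew_products} from ordinary graphs; the new ingredient is verifying that the separated sum relations~(3) propagate correctly through the covariance relation.

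For $\Phi$, work inside $\M(C^*(E,C)\rtimes_{\delta_c}G)$ and construct a Cuntz--Krieger $(E\times_c G, C\times_c G)$\nb-family from the canonical maps $j_A$ and $j_G$: set $Q_{(v,g)} := j_A(P_v)j_G(\chi_g)$ for the vertex projections, and define $T_{(e,g)}$ as $j_A(S_e)$ multiplied by an appropriate characteristic function $j_G(\chi_?)$, where the index is forced by the source/range convention of the skew product so that $T_{(e,g)}^*T_{(e,g)} = Q_{r(e,g)}$. The Cuntz--Krieger relations of Definition~\ref{defC*grafosep} can then be checked: relations (1) and (2) reduce to the covariance identity $j_A(S_e)j_G(\chi_h) = j_G(\chi_{c(e)h})j_A(S_e)$ (valid since $S_e\in A_{c(e)}$ by Theorem~\ref{isomordeltamaximal}) together with pointwise orthogonality of the $\chi_g$; relation (3) holds because, after commuting the $\chi$\nb-factors past $j_A(S_e^*)$ via covariance, the sum $\sum_{e\in X}T_{(e,g)}T_{(e,g)}^*$ collapses to $j_A\bigl(\sum_{e\in X}S_eS_e^*\bigr) j_G(\chi_g) = Q_{(v,g)}$. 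Universality of $C^*(E\times_c G, C\times_c G)$ then produces $\Phi$.

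For $\Psi$, build a covariant representation $(\pi,\mu)$ of $(C^*(E,C),\delta_c,G)$ in $\M(C^*(E\times_c G, C\times_c G))$ via strictly convergent sums
\[
\pi(P_v) := \sum_{g\in G} P_{(v,g)}, \qquad \pi(S_e) := \sum_{g\in G} S_{(e,g)}, \qquad \mu(\chi_g) := \sum_{v\in E^0} P_{(v,g)}.
\]
The family $\{\pi(P_v),\pi(S_e)\}$ satisfies the Cuntz--Krieger relations of $(E,C)$ because the relations hold fiberwise at each $g\in G$ in the skew product; so $\pi$ extends to a nondegenerate \Star{}homomorphism by universality of $C^*(E,C)$. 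The map $\mu$ is a nondegenerate representation of $C_0(G)$ since the $\mu(\chi_g)$ are mutually orthogonal projections summing strictly to $1$. The covariance $\pi(a)\mu(\chi_h) = \mu(\chi_{gh})\pi(a)$ for $a\in A_g$ needs checking only on the spectral generators $P_v$ and $S_e$, where it boils down via the source/range relations in the skew product to isolating the unique pair of indices that survives the orthogonality of the $P_{(v,h)}$. The universal property of crossed products by coactions (Section~\ref{sec:coactions}) then yields $\Psi := \pi\times\mu$.

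Checking $\Phi\circ\Psi=\id$ and $\Psi\circ\Phi=\id$ is a direct evaluation on generators. For the $G$\nb-equivariance, recall that $\widehat{\delta_c}_g$ is implemented on the concrete crossed product by conjugation with $U_g = \id_A\otimes\rho_g$, hence fixes $j_A(a)$ and translates the $j_G(\chi_h)$; transferred through $\Phi$, this matches $\gamma_g(P_{(v,h)}) = P_{(v,gh)}$ and $\gamma_g(S_{(e,h)}) = S_{(e,gh)}$ on the Cuntz--Krieger generators. The main technical obstacle is pinning down the exact formula for $T_{(e,g)}$ compatibly with the convention $r(e,g)=(r(e),gc(e))$ and the covariance of $\delta_c$; once this is settled the remaining verifications are routine manipulations with covariance, and the strict convergence of the sums defining $\pi$ and $\mu$ is controlled by noting that for any element of $C^*(E\times_c G, C\times_c G)$ only finitely many summands contribute upon multiplication.
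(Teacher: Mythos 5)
Your proposal follows essentially the same route as the paper: build a Cuntz--Krieger $(E\times_c G,C\times_c G)$\nb-family from $j_A$ and $j_G$ to get $\Phi$, then — since no gauge-invariant uniqueness theorem is available for separated graphs — construct the inverse explicitly as the integrated form of a covariant representation given by strictly convergent sums, and finally check equivariance on generators. This is exactly the paper's argument, and the strategic content (in particular the recognition that injectivity must be proved by exhibiting the inverse) is all there. One concrete correction, though: the index bookkeeping you defer is not merely a convention to be fixed later — as written, your formula $\mu(\chi_g)=\sum_{v}P_{(v,g)}$ makes the covariance condition fail. With the skew-product convention $r(e,g)=(r(e),gc(e))$ one computes $\pi(S_e)\mu(\chi_h)=S_{(e,hc(e)^{-1})}$ while $\mu(\chi_{c(e)h})\pi(S_e)=S_{(e,c(e)h)}$, and these differ. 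The paper resolves this by inserting inverses throughout: $p_{(v,g)}=j_B(P_v)j_G(\chi_{g^{-1}})$, $s_{(e,g)}=j_B(S_e)j_G(\chi_{(gc(e))^{-1}})$, and $\sigma(\chi_g)=\sum_v P_{(v,g^{-1})}$, after which both sides of the covariance identity equal $S_{(e,(c(e)h)^{-1})}$. So your plan is sound, but the two conventions you chose (no inverse on the vertex side, unspecified index on the edge side) cannot both be completed consistently; you must carry the $g^{-1}$'s through all three formulas for the verifications you describe to close.
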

\begin{proof}
	Let $B:=\CEC$ and $D:=\CEGC$, and let $(j_B,j_G^B)$ be the canonical covariant representation of $(B,C_0(G))$ in $\M(B \rtimes_{\delta_c} G)$. Let $\{P_v,S_e\}$ and $\{P_{(v,g)},S_{(e,g)}\}$ be the universal Cuntz-Krieger $E$-families for $B$ and $D$, respectively. To define a \Star{}\nb{}homomorphism from the \cstar{}algebra of the separated skew product graph to the crossed product as above let us define $$p_{(v,g)} := j_B(P_v)j_G(\chi_{g^{-1}}) \,\,  \text{ and } \, \, s_{(e,g)} := j_B(S_e)j_G(\chi_{(gc(e))^{-1}})$$ for all $v \in E^0$, $e \in E^1$ and $g \in G$. We claim that $\{p_{(v,g)},s_{(e,g)}\}$ is a Cuntz-Krieger $(E\times _c G, C\times_cG)$-family.
	
	Before we continue, note that the covariance condition in Definition \ref{condicaodecovariancia} and the fact that each $P_v \in B_1$ and $S_e \in B_{c(e)} $ imply 
	\begin{equation}\label{condicaodem}
		j_B(P_v)j_G(\chi_{g^{-1}}) = j_G(\chi_{g^{-1}})j_B(P_v) \text{ and } j_B(S_e)j_G(\chi_{(gc(e))^{-1}}) = j_G(\chi_{g^{-1}})j_B(S_e).
	\end{equation} 
	To check the conditions in Definition~\ref{defC*grafosep} we use the covariance condition and~\eqref{condicaodem}:
	\begin{align*}
		p_{(v,g)}p_{(w,h)} &= j_B(P_v)j_G^B(\chi_{g^{-1}})j_B(P_w)j_G^B(\chi_{h^{-1}})\\
		&=j_B(P_v)j_B(P_w)j_G^B(\chi_{g^{-1}})j_G^B(\chi_{h^{-1}})  \text{ (by \eqref{condicaodem})}\\
		&=j_B(P_vP_w)j_G^B(\chi_{g^{-1}}\chi_{h^{-1}})  \\
		&=\delta_{(v,g), (w,h)} p_{(v,g)}.
	\end{align*} 
	Thus $p_{(v,g)}$ is a family of mutually orthogonal projections. The first condition is
	\begin{align*}
		p_{s(e,g)}s_{(e,g)} &= j_B(P_{s(e)})j_G^B(\chi_{g^{-1}})j_B(S_e)j_G^B(\chi_{(gc(e))^{-1}})\\
		&=j_B(P_{s(e)})j_B(S_e)j_G^B(\chi_{(gc(e))^{-1}})j_G^B(\chi_{(gc(e))^{-1}}) \text{ (by \eqref{condicaodem})}\\
		&=j_B(P_{s(e)} S_e)j_G^B(\chi_{(gc(e))^{-1}})\\
		&=j_B(S_e)j_G^B(\chi_{(gc(e))^{-1}}) \\
		& = s_{(e,g)}.
	\end{align*}
	The second condition: for $(e,g),(f,g) \in X_g = X\times \{g\}$, $X \in C$,  
	\begin{align*}
		s_{(f,g)}^*s_{(e,g)} &= (j_B(S_f)j_G^B(\chi_{(gc(f))^{-1}}))^*j_B(S_e)j_G^B(\chi_{(gc(e))^{-1}}) \\
		&= j_G^B(\chi_{(gc(f))^{-1}})j_B(S_f^*)j_B(S_e)j_G^B(\chi_{(gc(e))^{-1}}) \\
		&= \delta_{e,f} j_G^B(\chi_{(gc(e))^{-1}})j_B(P_{r(e)}) j_G^B(\chi_{(gc(e))^{-1}}) \\
		& = j_B(P_{r(e)})j_G^B(\chi_{(gc(e))^{-1}}) 
		= p_{(r(e),gc(e))} = p_{r(e,g)}.
			\end{align*}
	And the third and last condition: for finite $X \in C_v$, $v \in E^0$ the subset $X_g=X\times \{g\} \in (C \times_c G)_{(v,g)}$ is finite and we have 
	\begin{align*}
		\displaystyle\sum_{(e,g) \in X_g} s_{(e,g)}s_{(e,g)}^* &= \displaystyle\sum_{(e,g) \in X_g}j_B(S_e)j_G^B(\chi_{(gc(e))^{-1}})(j_B(S_e)j_G^B(\chi_{(gc(e))^{-1}}))^* \\
		&=\displaystyle\sum_{(e,g) \in X_g}j_B(S_e)j_G^B(\chi_{(gc(e))^{-1}})j_B(S_e^*)  \\
		&=\displaystyle\sum_{(e,g) \in X_g}j_B(S_e)j_B(S_e^*)j_G^B(\chi_{g^{-1}})\quad  (\text{by \ref{condicaodem}}) \\
		&=j_B\left(\displaystyle\sum_{e \in X} S_eS_e^*\right)j_G^B(\chi_{g^{-1}}) 
		=j_B(P_v)j_G^B(\chi_{g^{-1}})  = p_{(v,g)}.
	\end{align*}
	By the universal property there is a *-homomorphism $$\phi: \CEGC \rightarrow \CEC \rtimes_{\delta_c} G $$ satisfying $\phi(P_{(v,g)}) = p_{(v,g)}$ and $\phi(S_{(e,g)}) = s_{(e,g)}$. To see that it is surjective, we first note that for every path $\mu \in \text{Path}(E)$ and $g \in G$ we have a unique path $(\mu,g)$ in $E \times_c G$ such that $\phi(S_{(\mu,g)}) = s_{(\mu,g)} =  j_G^B(\chi_{g^{-1}})j_B(S_\mu)$. By Proposition \ref{basegrafosseparados} we have basis elements in $L(E\times_c G,C \times_c G)$ of the form: 
	\begin{equation}
	\label{eq:pathinCtimesG} S_{(\mu_1,g)}S_{(\nu_1,z_1)}^*S_{(\mu_2,z_1)}S_{(\nu_1,z_2)}^* \ldots S_{(\mu_n,z_{n-1})}S_{(\nu_n,z_n)}^*
	\end{equation}
	where $(\mu_1,g)(\nu_1,z_1)^*(\mu_2,z_1)(\nu_2,z_2)^{*}\cdots (\mu_n,z_{n-1}) (\nu_n,z_n)^*$ is a $C\times G$-separated reduced path. Note that $\varsigma:= \mu_1\nu_1^* \cdots \mu_n\nu_n^*$ is a $C$-separated reduced path, and that $z_1 = g c(\mu_1)c(\nu_1)^{-1}$  and $z_i =z_{i-1}c(\mu_i)c(\nu_i)^{-1}$ for all $i \in \{2,\ldots,n\}$. Writing $$(\varsigma, g) := (\mu_1,g)(\nu_1,z_1)^*\cdots (\mu_n,z_{n-1}) (\nu_n,z_n)^*,$$ we observe that $s(\varsigma,g)= (s(\varsigma), g)$ and $r(\varsigma ,g)= (r(\varsigma ), gc(\varsigma))$. 
	
	Now, we are going to compute $\phi$ on the basis elements of the form \eqref{eq:pathinCtimesG}. 
	Denoting by $S_{(\varsigma, g)}$ such element, taking into account the above computations and using  \eqref{condicaodem},  we get	
	\begin{align*}
		\phi(S_{(\varsigma ,g)}) & 
    = j_B(S_{\mu_1})j_G^B(\chi_{(gc(\mu_1))^{-1}}) j_B(S_{\nu_1})^*j_B(S_{\mu_2})j_G^B(\chi_{(z_1c(\mu_2))^{-1}})j_B(S_{\nu_2})^*\cdots \\
	 & \cdots  j_B(S_{\mu_n})j_G^B(\chi_{(z_{n-1}c(\mu_n))^{-1}})j_B(S_{\nu_n})^* \\
		&= j_G^B(\chi_{g^{-1}})j_B(S_{\varsigma}).
	\end{align*}
	Extending linearly to $L(E\times_c G,C \times_c G)$ it follows that the image of $\phi$ contains all elements of the form $j_G^B(\chi)j_B(b)$ with $\chi \in C_c(G)$ and $b \in L(E,C)$. Since the span of these elements is a dense subspace of the crossed product $B \rtimes_{\delta_c} G$ this shows that $\phi$ is surjective.   
	
To prove that $\phi$ is injective and hence an isomorphism we cannot follow the same idea as for non-separated graphs in \cite[Theorem 2.4]{Kaliszewski-Quigg-Raeburn:Skew_products} because here we do not have an injectivity theorem as used for graphs. So the way to show this is to construct the inverse. For this, we are going to define a covariant representation $(\pi,\sigma)$ of $(\CEC,G,\delta_c)$ into $\M(\CEGC)$ which will be given by: 
	\begin{align}\label{isomorgrafosskewseparproducruzadoinversa}
		\pi(P_v) = \displaystyle\sum_{g  \in G} P_{(v,g)}, \quad \pi(S_e) = \displaystyle\sum_{g  \in G} S_{(e,g)} \quad \text{ and } \quad \sigma(\chi_g) = \displaystyle\sum_{v  \in E^0} P_{(v,g^{-1})}
	\end{align}  for all $v \in E^0$, $e \in E^1$ and $g \in G$. First of all, we need to make sure that the above formulas define *-homomorphisms. For $v \in E^0$, we claim that the sum $\sum_{g  \in G} P_{(v,g)}$ converges in the strict topology of $\M(\CEGC)$. Indeed, since the net of all finite sums of projections has norm uniformly bounded by 1, 
it is enough to check that $(\sum_{g} P_{(v,g)}) S_{(\varsigma,g)}$ 
	converges for each $C\times G$-separated reduced path $(\varsigma, g)$, where we follow the conventions and notations introduced above. But it is easily seen that this converges to either $S_{(\varsigma, g)}$ or $0$, depending on whether $v=s(\varsigma)$ or not.  
	
	So, we have a well-defined element $\pi(P_v) \in \M(\CEGC)$ which is the strict limit of the net considered above. A similar argument shows that, for each $e \in E^1$, $\pi(S_e)$ yields a well-defined element of $\M(\CEGC)$. It is straightforward to check that $\{\pi(P_v), \pi(S_e)\}$ is a Cuntz-Krieger $(E,C)$-family. By the universal property there exists a $\pi$ with the desired properties \eqref{isomorgrafosskewseparproducruzadoinversa}. The formula for $\sigma$ in \eqref{isomorgrafosskewseparproducruzadoinversa} also makes sense (the sum converges strictly) and the family of projections $\{\sigma(\chi_g): g\in G\}$ is mutually orthogonal, hence it determines a *-homomorphism $\sigma: C_0(G) \to \M(\CEGC)$.
	
	Now, we are going to check that $(\pi, \sigma)$ is a covariant representation, that is, we prove the relation in Definition \ref{condicaodecovariancia}. To see this, fix a $C$-separated reduced path  $\varsigma $ and $g \in G$. Since $S_\varsigma \in B_{c(\varsigma)}$, on the one hand we have 
	\begin{align*}
		\pi(S_\varsigma)\sigma(\chi_g) &= \left(\displaystyle\sum_{h  \in G} S_{(\varsigma,h)}\right)\left(\displaystyle\sum_{v  \in E^0} P_{(v,g^{-1})}\right)\\
		&= \displaystyle\sum_{h  \in G, v \in E^0} S_{(\varsigma ,h)}P_{(v,g^{-1})}\\
		&=S_{(\varsigma,(c(\varsigma )g)^{-1})} 
	\end{align*}
	where in the last step we have used that the summand $S_{(\varsigma ,h)}P_{(v,g^{-1})}$ is non-zero if and only if $(v,g^{-1})=r(\varsigma,h) = (r(\varsigma),hc(\varsigma))$, that is, $v=r(\varsigma)$ and $h=(c(\varsigma)g)^{-1}$. On the other hand, 
	\begin{align*}
		\sigma(\chi_{c(\varsigma)g})\pi(S_\varsigma) &= \left(\displaystyle\sum_{v  \in E^0} P_{(v,(c(\varsigma)g)^{-1})}\right)\left(\displaystyle\sum_{h  \in G} S_{(\varsigma,h)}\right)\\
		&= \displaystyle\sum_{h  \in G, v \in E^0} P_{(v,(c(\varsigma)g)^{-1})}S_{(\varsigma,h)}\\
		&=S_{(\varsigma,(c(\varsigma)g)^{-1})} 
	\end{align*}
	where again we used that the only non-zero summand is for $(v,(c(\varsigma)g)^{-1})=s(\varsigma,h) = (s(\varsigma),h)$, that is, $v=s(\varsigma)$ and $h=(c(\varsigma)g)^{-1}$. This proves the covariance relation for the pair $(\pi,\sigma)$ and therefore by the universal property we get a nondegenerate *-homomorphism $$\psi:=\pi \times \sigma: \CEC \rtimes_{\delta_c} G \rightarrow \M(\CEGC)$$ such that $\psi \circ j_B  = \pi$ and $\psi \circ j_G^B = \sigma$. Now, we compute: 
		\begin{align*}
		\psi \circ \phi ( P_{(w,g)}) &= \psi(p_{(w,g)})
		= \psi(j_B(P_w)j_G^B(\chi_{g^{-1}})) \\
		&=\pi(P_w)\sigma(\chi_{g^{-1}}) 
		=\left(\displaystyle\sum_{h  \in G} P_{(w,h)}\right)\left(\displaystyle\sum_{v  \in E^0} P_{(v,g)}\right) \\
		&= \displaystyle\sum_{h  \in G, v \in E^0} P_{(w,h)}P_{(v,g)} =P_{(w,g)}
	\end{align*} 
	and 
	\begin{align*}
		\psi \circ \phi ( S_{(e,g)}) &= \psi(s_{(e,g)}) = \psi(j_B(S_e)j_G^B(\chi_{(gc(e))^{-1}})) \\
		&=\pi(S_e)\sigma(\chi_{(gc(e))^{-1}}) 
		=\left(\displaystyle\sum_{h  \in G} S_{(e,h)}\right)\left(\displaystyle\sum_{v  \in E^0} P_{(v,gc(e))}\right)\\ 
		& = \displaystyle\sum_{h  \in G, v \in E^0} S_{(e,h)}P_{(v,gc(e))} 
		=S_{(e,g)}.
	\end{align*} 
	Since the elements $P_{(v,g)}$ and $S_{(e,g)}$ generate the \cstar{}algebra $\CEGC$, it follows that $\psi \circ \phi = id$ and hence $\phi$ is injective, therefore an isomorphism. In particular, the range of $\psi$ is inside of $\CEGC$. Finally, we are going to check the $G$-equivariance of the actions. Note that for all $z\in G$,
	\begin{align*}
		\phi(\gamma_z(P_{(v,g)})) &= \phi(P_{(v,zg)}) 
		= p_{(v,zg)} 
=j_B(P_v)j_G^B(\chi_{g^{-1}z^{-1}}) \\
&		=(\widehat{\delta_c})_z(j_B(P_v)j_G^B(\chi_{g^{-1}})) 
        =(\widehat{\delta_c})_z(\phi(P_{(v,g)})) 
	\end{align*}
	and
	\begin{align*}
		\phi(\gamma_z(S_{(e,g)})) &= \phi(S_{(e,zg)}) 
		= s_{(e,zg)} 
		=j_B(S_e)j_G^B(\chi_{(zgc(e))^{-1}}) \\
		&=(\widehat{\delta_c})_z(j_B(S_e)j_G^B(\chi_{(gc(e))^{-1}})) 
		=(\widehat{\delta_c})_z(\phi(S_{(e,g)})).
	\end{align*}	
\end{proof}

\begin{corollary}\label{corolariografosskewseparados}
	For a separated graph $(E,C)$ and a labeling $c: E\to G$, there is a canonical isomorphism
	$$C^*(E\times_c G,C\times_c G)\rtimes_{\gamma}G\cong C^*(E,C)\otimes\K(\ell^2 G)$$
	where $\gamma$ is the action of $G$ on $C^*(E\times_c G,C\times_c G)$ induced by the translation action on $(E\times_c G,C\times_c G)$.
\end{corollary}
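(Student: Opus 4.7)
The plan is to combine the two main results established in this section: Theorem~\ref{isomorgrafosskewseparproducruzado} identifies $C^*(E\times_c G, C\times_c G)$ with the coaction crossed product $C^*(E,C)\rtimes_{\delta_c}G$ and sends the translation action $\gamma$ to the dual action $\widehat{\delta_c}$, while Theorem~\ref{isomordeltamaximal} asserts that $\delta_c$ is a maximal coaction, which is precisely the statement of Katayama-type duality
$$C^*(E,C)\rtimes_{\delta_c}G\rtimes_{\widehat{\delta_c}}G\;\cong\; C^*(E,C)\otimes\K(\ell^2 G).$$

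First I would invoke Theorem~\ref{isomorgrafosskewseparproducruzado} to obtain a $G$-equivariant isomorphism
$$\bigl(C^*(E\times_c G, C\times_c G),\gamma\bigr)\;\cong\;\bigl(C^*(E,C)\rtimes_{\delta_c}G,\widehat{\delta_c}\bigr).$$
Since crossed products by actions are functorial on equivariant isomorphisms, applying $(-)\rtimes G$ to both sides yields
$$C^*(E\times_c G, C\times_c G)\rtimes_\gamma G\;\cong\;C^*(E,C)\rtimes_{\delta_c}G\rtimes_{\widehat{\delta_c}}G.$$
Then I would apply the maximality of $\delta_c$ from Theorem~\ref{isomordeltamaximal} to identify the right-hand side with $C^*(E,C)\otimes\K(\ell^2 G)$, and compose the two isomorphisms to conclude.

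Since both steps have already been carried out in the preceding theorems, there is no real obstacle here: the corollary is essentially a formal combination. The only thing one might want to double-check is that the isomorphism in Theorem~\ref{isomorgrafosskewseparproducruzado} really intertwines $\gamma$ with $\widehat{\delta_c}$ so that taking crossed products on either side gives the same algebra up to canonical isomorphism; but this equivariance is precisely the last assertion of that theorem, verified on generators at the end of its proof. Thus the proof reduces to a one-line concatenation of the two displayed isomorphisms.
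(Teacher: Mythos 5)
Your proposal is correct and matches the paper's own argument, which simply cites Theorem~\ref{isomorgrafosskewseparproducruzado} and Theorem~\ref{isomordeltamaximal}; you have merely spelled out the intermediate steps (equivariance of the isomorphism, functoriality of the crossed product, and Katayama duality via maximality) that the paper leaves implicit. No gaps.
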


\begin{proof}
	Follows from Theorems \ref{isomorgrafosskewseparproducruzado} and \ref{isomordeltamaximal}. 
\end{proof}

\begin{corollary}\label{corolariografosskewseparados1}
	For a free action $\theta$ of a group $G$ on a separated graph $(E,C)$, there is a canonical isomorphism
	$$C^*(E,C)\rtimes_{\theta}G\cong C^*(E/G,C/G)\otimes\K(\ell^2 G).$$
\end{corollary}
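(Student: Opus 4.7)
The plan is to reduce this to Corollary~\ref{corolariografosskewseparados} via the separated graph Gross-Tucker Theorem~\ref{theo:Gross-Tucker-Separated}. Since $\theta$ is a free action of $G$ on $(E,C)$, Theorem~\ref{theo:Gross-Tucker-Separated} produces a labeling function $c\colon (E/G)^1 \to G$ and a $G$-equivariant isomorphism of separated graphs
$$\Phi\colon (E/G \times_c G,\, C/G \times_c G) \xrightarrow{\sim} (E,C),$$
where $G$ acts on the left-hand side by the translation action $\gamma$ on the second coordinate, and on the right-hand side by $\theta$.

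Next, I would observe that the functoriality of the separated graph $C^*$-algebra construction (an isomorphism of separated graphs sends a Cuntz-Krieger family to a Cuntz-Krieger family, via Definition~\ref{defC*grafosep}) lifts $\Phi$ to a $*$-isomorphism
$$\Phi_*\colon C^*(E/G \times_c G,\, C/G \times_c G) \xrightarrow{\sim} C^*(E,C),$$
and the $G$-equivariance of $\Phi$ translates directly into $G$-equivariance of $\Phi_*$ with respect to the induced actions of $\gamma$ and $\theta$. Applying the crossed product functor (which is well-defined because $G$-equivariant isomorphisms induce crossed product isomorphisms) yields
$$\Phi_* \rtimes G\colon C^*(E/G \times_c G,\, C/G \times_c G) \rtimes_{\gamma} G \xrightarrow{\sim} C^*(E,C) \rtimes_{\theta} G.$$

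Finally, Corollary~\ref{corolariografosskewseparados} applied to the separated graph $(E/G, C/G)$ with the labeling $c$ gives a canonical isomorphism
$$C^*(E/G \times_c G,\, C/G \times_c G) \rtimes_{\gamma} G \cong C^*(E/G, C/G) \otimes \K(\ell^2 G),$$
and composing this with $(\Phi_* \rtimes G)^{-1}$ delivers the desired isomorphism. There is no serious obstacle here: the only subtlety is checking that Gross-Tucker's isomorphism is $G$-equivariant for the \emph{same} translation action $\gamma$ that appears in Corollary~\ref{corolariografosskewseparados}, which is exactly how both statements were formulated, so the composition goes through cleanly.
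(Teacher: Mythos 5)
Your proposal is correct and is exactly the argument the paper intends: its proof of this corollary is the one-line citation of Corollary~\ref{corolariografosskewseparados} together with the Gross--Tucker Theorem~\ref{theo:Gross-Tucker-Separated}, and you have simply spelled out the intermediate steps (functoriality of $(E,C)\mapsto C^*(E,C)$ on separated graph isomorphisms, equivariance, and the induced isomorphism of crossed products). Nothing is missing.
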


\begin{proof}
	Follows from Corollary~\ref{corolariografosskewseparados} and the Gross-Tucker Theorem~\ref{theo:Gross-Tucker-Separated}.
\end{proof}

\begin{example}
	If we consider the Cayley separated graph $(E_G,C_G)$ from Example \ref{exgrafodecayleyseparado} we know that  $(E_G,C_G)$ carries a free action $\beta$ of $G$ and hence by Corollary \ref{corolariografosskewseparados1} we see that $$C^*(E_G,C_G) \rtimes_{\beta} G \cong C^*(A_n,D) \otimes \K(\ell^2 G) \cong C^*(\F_n) \otimes \K(\ell^2 G),$$ 
where $\F_n$ is the free group generated by the $n$ edges of $A_n$. 
\end{example}

\begin{remark}\label{grafoCuntzseparado}
	The coaction $\delta_c$ is not always a normal coaction on $\CEC$. For example, if we consider the separated graph $(A_n,D)$ from Example \ref{excanonicografoseparado} and the label $c$ from $A_n$ to the free group $G=\Free_n$ on the $n$ edges of $A_n$, then we have $C^*(A_n,D) \cong C^*(\mathbb{F}_n)$ and $\delta_c$ coincides with the canonical coaction $\delta_{\Free_n}$ of $\Free_n$ on its \cstar{}algebra $C^*(\Free_n)$ given by its comultiplication. Then $\delta_{\mathbb{F}_n}$ is maximal but not a normal coaction since $\mathbb{F}_n$ is not amenable for $n>1$. The normalization of $\delta_{\mathbb{F}_n}$ is the canonical coaction on $C_r^*(\mathbb{F}_n)$. This is related to the reduced \cstar{}algebra of $(E,C)$, to be introduced in the next section.
%	which is isomorphic to $C_r^*(A_n,D)$, the reduced \cstar{}algebra of separated graph to be defined in the next section. In general, the normalization of $\delta_c$ will not be a coaction on $\CECR$. For instance, we can consider $(E,C)$ any separated graph, $G$ any discrete group and endow it with the ``trivial'' labeling function where $c(e)=1$ for all $e \in E^1$. In this case the coaction $\delta_c$ is trivial, that is, $\delta_c(a)=a\otimes 1$ for all $a$, so it is both maximal and normal. As will become clear in the next sections, in the general case the normalization of $\delta_c$ acts on an intermediate quotient between $C^*(E,C)$ and $C^*_r(E,C)$.	
\end{remark}

Summarising the main results from this section, we obtain the following commutative diagram of isomorphisms:
	\[
	\begin{tikzcd}
		\CEGC \rtimes_{\gamma} G \arrow{dr}{\ref{corolariografosskewseparados}} \arrow{rr}{\ref{isomorgrafosskewseparproducruzado}}  & &  \CEC \rtimes_{\delta_c} G \rtimes_{\hat{\delta_c}} G  \arrow{dl}{\ref{isomordeltamaximal}} \\
		&\CEC \otimes \K(\ell^2 G)  \\
	\end{tikzcd}
	\]

\section{Reduced Separated Graph $C^*$-algebras}\label{cap3}

In this section we deal with reduced \cstar{}algebras of separated graphs. For this purpose, we need to restrict our attention to finitely separated graphs. 
The main reason for this restriction is Proposition~\ref{esperançacondicional1} on the existence of a certain canonical conditional expectation on usual graph \cstar{}algebras taking values onto the commutative \cstar{}algebra generated by the vertex projections of the graph. The existence of this conditional expectation is only granted in the row-finite case. There seems to be no such expectation in general, as witnessed for instance by the graph with only one vertex and infinitely many edges, which gives rise to the Cuntz algebra $\Cuntz_\infty$.

The original definition of reduced \cstar{}algebras uses reduced amalgamated free products, see \cite{Voiculescu:Symmetries} and \cite{Ara-Goodearl:C-algebras_separated_graphs}. Our strategy here is to give an alternative description of reduced \cstar{}algebras of separated graphs that emphasizes a canonical conditional expectation $P\colon C^*(E,C) \rightarrow C_0(E^0)$.  

\subsection{Reduced amalgamated free products}

Let $(E,C)$ be a separated graph. For each $X \in C$, we consider the subgraph $E_X$ of $E$ with $(E_X)^0 := E^0$ and $(E_X)^1 := X$. 
The canonical inclusions $(E^0,\emptyset) \hookrightarrow (E_X,\{ X \}) \hookrightarrow (E,C)$ yield \cstar{}algebra inclusions
$$C_0(E^0) = C^*(E^0,\emptyset)\into C^*(E_X)\into \CEC.$$

We then recall the following result:

\begin{proposition}\cite[Proposition 3.1]{Ara-Goodearl:C-algebras_separated_graphs}\label{propcoproduto}
Let $(E,C)$ be a separated graph. Then $\CEC$ together with the inclusions $C^*(E_X) \into \CEC$ is the amalgamated free product
	$$C^*(E,C)\cong \freeprod{C_0(E^0)}{C^*(E_X)}$$	
of the family of \cstar{}algebras $(C^*(E_X))_{X \in C}$ over the common \cstar{}subalgebra $C_0(E^0)$. 
\end{proposition}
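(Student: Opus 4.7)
The plan is to verify the universal property that characterizes an amalgamated free product of \cstar{}algebras. Concretely, I must show two things: (i) for each $X\in C$ there is a canonical \Star{}homomorphism $\iota_X\colon C^*(E_X)\to C^*(E,C)$ that restricts to the identity on $C_0(E^0)$, and (ii) given any \cstar{}algebra $B$ together with \Star{}homomorphisms $\phi_X\colon C^*(E_X)\to B$ ($X\in C$) whose restrictions to $C_0(E^0)$ all coincide, there exists a unique \Star{}homomorphism $\Phi\colon C^*(E,C)\to B$ such that $\Phi\circ\iota_X=\phi_X$ for every $X$.

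For step (i), I would observe that the subgraph $E_X$ has $E_X^0=E^0$ and only the edges in $X$, all emanating from the single vertex $v_X:=s(X)$. Inside $C^*(E,C)$, the family $\{P_v\}_{v\in E^0}\cup\{S_e\}_{e\in X}$ satisfies the Cuntz-Krieger relations for the ordinary (trivially separated) graph $E_X$: relations (1) and (2) of Definition~\ref{defC*grafosep} restrict to precisely the graph relations for $E_X$, and relation (3) applied to the piece $X\in C_{v_X}$ (when $X$ is finite) gives $P_{v_X}=\sum_{e\in X}S_eS_e^*$, which is exactly the Cuntz-Krieger identity needed at the only non-sink vertex of $E_X$. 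By the universal property of $C^*(E_X)$, this induces $\iota_X$, and manifestly $\iota_X(P_v)=P_v$ for each $v\in E^0$, so all inclusions agree on $C_0(E^0)$.

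For step (ii), given a compatible family $\{\phi_X\}$, the amalgamation hypothesis lets me define $Q_v:=\phi_X(P_v)$ unambiguously (independent of $X$), and for each $e\in E^1$ set $T_e:=\phi_X(S_e)$ where $X\in C$ is the unique block containing $e$. I would then check that $\{Q_v,T_e\}$ is a Cuntz-Krieger $(E,C)$-family in $B$. Every relation in Definition~\ref{defC*grafosep} is ``internal'' to a single block $X\in C$: relations (1) and (2) involve edges from a common $X$, and relation (3) concerns a single finite $X\in C_v$. Hence each relation is the image under some $\phi_X$ of a relation already holding in $C^*(E_X)$. The universal property of $C^*(E,C)$ then produces the desired $\Phi$, and uniqueness follows because the generators $P_v,S_e$ of $C^*(E,C)$ all lie in $\bigcup_X\iota_X(C^*(E_X))$.

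The main subtle point is not a computational obstacle but a conceptual one: one must be careful that the relations for the graph $E_X$ are \emph{precisely} those inherited from $(E,C)$ at the edges in $X$, no more and no less. In particular, the sum relation $P_{v_X}=\sum_{e\in X}S_eS_e^*$ holds in $C^*(E_X)$ because $v_X$ is not a sink in $E_X$ and $X$ is the unique partition block at $v_X$ in $E_X$; this matches exactly the contribution of $X$ in the separated relation at $v$ in $C^*(E,C)$. Once this alignment of relations is pinned down, everything else is a direct application of the two relevant universal properties, and no nontrivial analytic estimate (such as a faithfulness or norm comparison argument, which would be needed for a \emph{reduced} free product) is required here.
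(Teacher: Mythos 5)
Your argument is correct and is essentially the proof of the cited result \cite[Proposition 3.1]{Ara-Goodearl:C-algebras_separated_graphs}: the paper itself quotes the proposition without proof, and the intended argument is exactly the block-by-block matching of the relations (1)--(3) with the Cuntz--Krieger relations of the ordinary graphs $E_X$, followed by the two universal properties. The one point you leave implicit is the injectivity of the canonical maps $C^*(E_X)\to C^*(E,C)$ (needed to call them \emph{inclusions}); this does not follow from the universal-property comparison alone, but is supplied by the general theory of full amalgamated free products over an embedded common subalgebra (or by the existence of the conditional expectation onto $C_0(E^0)$), and it does not affect the isomorphism statement.
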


Let us recall (see for instance \cite{Voiculescu-Dykema-Nica:Free, Blackadar:Weak_expectations}) that, in general, the amalgamated free product
$$A=\freeprod{A_0}{A_i}$$	
of a family of \cstar{}algebras $(A_i)_{i \in I}$ with a common \cstar{}subalgebra $A_0\sbe A_i$ 
is the \cstar{}algebra $A$ generated by (copies of) $A_i$ as \cstar{}algebras satisfying the following universal property: Given a \cstar{}algebra $D$ and a family of \Star{}homomorphisms 
$h_i: A_i \to D$ with $h_i|_{A_0}= h_{j}|_{A_0}$ for all $i,j \in I$, there is a unique $h: A\to D$ such that $h_i = h\circ g_i $ for all $i \in I$, where $g_i\colon A_i \to A$ are the canonical $*$-homomorphisms. In the literature most papers only deal with amalgamated free products of unital \cstar{}algebras, but the general case can be covered by taking unitizations. This is already done in \cite{Ara-Goodearl:C-algebras_separated_graphs} for instance.

Proposition~\ref{propcoproduto} provides some examples of \cstar{}algebras of separated graphs. 

\begin{example}\label{exgrafosepararo2}
	Let $(E,C)$ be a separated graph with $E^0=\{v\}$, $E^1=\{e_1,\ldots,e_n,f_1,\ldots,f_m\}$ and $C=C_v:=\{X,Y\}$ with $X=\{e_1,\ldots,e_n\}$ and $Y=\{f_1,\ldots,f_m\}$ as described in Figure \ref{fig:separatedgraph01} below. 
	\begin{center}{
			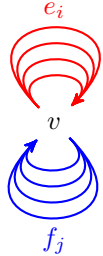
\begin{figure}[htb]
				\begin{tikzpicture}[scale=2]
					\node (v) at (1,0) {$v$};
					\Loop[dist=1cm,dir=NO,label=$e_i$,labelstyle=above,color=red](v)
					\Loop[dist=0.8cm,dir=NO,color=red](v)
					\Loop[dist=0.6cm,dir=NO,color=red](v)
					\Loop[dist=0.4cm,dir=NO,color=red](v)
				 	\Loop[dist=1cm,dir=SO,label=$f_j$,labelstyle=below,color=blue](v)
				 	\Loop[dist=0.8cm,dir=SO,color=blue](v)
				 	\Loop[dist=0.6cm,dir=SO,color=blue](v)
				 	\Loop[dist=0.4cm,dir=SO,color=blue](v)
				\end{tikzpicture}
				\caption{The separated graph with $n+m$ loops}
				\label{fig:separatedgraph01}
		\end{figure}}
	\end{center}

	Consider a group $G$ with generators $g_1,\ldots,g_n,h_1,\ldots,h_m$ and $c:E^1 \to G$ a labeling function defined by $c(e_i):=g_i$ and $c(f_j):=h_j$ for all $i \in \{1,\ldots,n\}$ and $j \in \{1,\ldots,m\}$. In this way we get the skew product separated graph $(E \times_c G,C \times_c G)$. By Proposition \ref{propcoproduto} we have $\CEC \cong C^*(E_X) \star_{\C} C^*(E_Y) \cong \mathcal{O}_n \star_{\C} \mathcal{O}_m$, and consequently, by Theorem \ref{isomorgrafosskewseparproducruzado}, $$\CEGC \cong \CEC \rtimes_{\delta_c} G \cong (\mathcal{O}_n \star_{\C} \mathcal{O}_m)\rtimes_{\delta_c} G$$ where $\delta_c(s_i) = s_i \otimes g_i$ and $\delta_c(t_j) = t_j \otimes h_j$ for every $i,j$. 
\end{example}

\begin{comment}

We need the notion of subgraphs in the separated case, called complete subgraphs \cite{Ara-Goodearl:Leavitt_path}.

\begin{definition}
	Let $(E,C)$ and $(F,D)$ be two finitely separated graphs. A morphism from $(E,C)$ to $(F,D)$ is a graph morphism $f: E \to F$ such that
	\begin{enumerate}
		\item $\phi^0: E^0 \to F^0$ is injective,
		\item For each $v \in E^0$ and each $X \in C_v$ there is $Y \in D_{\phi^0(v)}$ such that $\phi^1$ induces a bijection from $X$ to $Y$. 
	\end{enumerate}
\end{definition}
\begin{remark}
	Condition 2 does not imply that $\phi^1$ is injective since we can map two different $X,Y\in C_v$ to the same set of $D_{\phi^0(v)}$. 
\end{remark}

\begin{definition}
	A complete subgraph of a finitely separated graph $(F,D)$ is a finitely separated graph $(E,C)$ such that $E$ is a subgraph of $F$ and $C_v = \{ Y \in D_v ~|~ Y \cap E^1 \neq \emptyset\}$ for each $v \in E^0$, that is, $C$ is a subset of $D$.
\end{definition}

\begin{remark}
	If $(E,C)$ is a complete subgraph of $(F,D)$ then the inclusion map $E \hookrightarrow F$ induces a morphism from $(E,C)$ to $(F,D)$.   
\end{remark}

\begin{remark}
	Any morphism $f: (E,C) \to (F,D)$ induces a unique *-homomorphism $\CEC \to C^*(F,D)$ sending $P_v \mapsto P_{f^0(v)}$ and $S_e \mapsto S_{f^1(e)}$ for every $v \in E^0$ and $e \in E^1$, since $\{P_{f^0(v)},S_{f^1(e)}\}$ gives us a Cuntz-Krieger $(E,C)$-family in a natural way. 
\end{remark}
\end{comment}

\subsection{Conditional expectations and reduced $C^*$-algebras}

In this section we recall the definition of reduced \cstar{}algebras of separated graphs and realize them as certain reduced quotients with respect to canonically constructed conditional expectations. 
The following result from \cite{Ara-Goodearl:C-algebras_separated_graphs} is essential in this context:

\begin{proposition}\cite[Theorem 2.1]{Ara-Goodearl:C-algebras_separated_graphs}\label{esperançacondicional1}
	If $E$ is a row-finite graph, then there is a unique faithful conditional expectation $$\phi_E:C^*(E) \to C_0(E^0)$$ such that, for all paths $\mu, \nu \in \mathrm{Path}(E)$ we have 
	
	$$\phi_E(S_\mu S_\nu^*) = \begin{cases}
		n_\mu P_{s(\mu)}& \text{ if } \mu=\nu \\ 
		0& \text{ if } \mu \neq \nu 
	\end{cases},$$ 
where $n_\mu := \left(\prod_{i=1}^n |s^{-1}(s(\mu_i))|\right)^{-1}$ if $\mu=\mu_1 \ldots \mu_n \in \text{Path}(E)$; and if the length of $\mu$ is zero, we set $n_\mu = 1$.
\end{proposition}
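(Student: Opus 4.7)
The plan is to produce $\phi_E$ concretely as a matrix coefficient of a Hilbert-module representation, and then to establish faithfulness via the gauge action.

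\emph{Uniqueness.} Since $L(E)$ is dense in $C^*(E)$, any bounded linear map on $C^*(E)$ is determined by its values on the spanning elements $S_\mu S_\nu^*$ with $r(\mu)=r(\nu)$ (the unseparated analogue of Proposition~\ref{basegrafosseparados}). Hence at most one bounded $\phi_E$ can satisfy the stated formula, and uniqueness follows provided existence is established.

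\emph{Existence.} I would realize $\phi_E$ as a ``vacuum'' vector state of a canonical random-walk representation. Concretely, form the Hilbert $\contz(E^0)$\nb-module $\M$ with orthogonal basis $\{\xi_\mu : \mu\in\mathrm{Path}(E)\}$ and inner product normalized by
\[
\braket{\xi_\mu}{\xi_\nu} \;=\; \delta_{\mu,\nu}\, n_\mu\, P_{r(\mu)}.
\]
Define adjointable operators $\hat S_e, \hat P_v$ on $\M$ by
\[
\hat P_v\xi_\mu = \delta_{v,s(\mu)}\xi_\mu,\qquad
\hat S_e\xi_\mu = \delta_{r(e),s(\mu)}\xi_{e\mu},
\]
with adjoint $\hat S_e^*\xi_{e\mu} = |s^{-1}(s(e))|\,\xi_\mu$ and $\hat S_e^*\xi_\mu=0$ whenever $\mu$ does not begin with $e$. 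A direct verification using row-finiteness and the weights $n_\mu$ shows that $\{\hat P_v,\hat S_e\}$ is a Cuntz--Krieger $E$\nb-family; the universal property then yields a representation $\pi\colon C^*(E)\to \L(\M)$. Letting $p\colon\M\to\M$ be the projection onto $\cspn\{\xi_v:v\in E^0\}$, the formula $\phi_E(a):=p\pi(a)p$, viewed as an element of $\L(p\M)\cong \contz(E^0)$, is automatically completely positive and contractive, is the identity on $\contz(E^0)$, and is $\contz(E^0)$\nb-bilinear; a routine computation on a monomial $S_\mu S_\nu^*$ recovers the claimed value $\delta_{\mu,\nu}\, n_\mu P_{s(\mu)}$.

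\emph{Faithfulness.} The gauge action $\alpha_z(S_e)=zS_e$, $\alpha_z(P_v)=P_v$ for $z\in \mathbb T$ yields by averaging a faithful conditional expectation $\Phi_0\colon C^*(E)\to C^*(E)^{\mathbb T}$ onto the fixed-point algebra, and $\phi_E=\Psi\circ\Phi_0$ where $\Psi\colon C^*(E)^{\mathbb T}\to \contz(E^0)$ is defined on the diagonal spanning elements $S_\mu S_\mu^*$ (with $|\mu|$ fixed) by $S_\mu S_\mu^*\mapsto n_\mu P_{s(\mu)}$. The fixed-point algebra is an AF inductive limit of finite direct sums of matrix blocks indexed by length\nb-$n$ paths sharing a common source, and $\Psi$ restricts on each block to the normalized trace of that matrix block (rescaled by $P_{s(\mu)}$). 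Since each such trace is a faithful state on its block, $\Psi$ is faithful; composing with the faithful expectation $\Phi_0$ gives faithfulness of $\phi_E$.

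The main obstacle I expect is the bookkeeping verifying that the weighted operators $\hat S_e$ are adjointable and satisfy the Cuntz--Krieger relation $\sum_{e\in s^{-1}(v)}\hat S_e\hat S_e^*=\hat P_v$ with the specific weights $n_\mu$; once this is checked, everything else is formal, and faithfulness follows from the AF description of $C^*(E)^{\mathbb T}$.
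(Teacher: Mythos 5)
The paper does not actually prove this statement; it imports it verbatim from \cite[Theorem 2.1]{Ara-Goodearl:C-algebras_separated_graphs}, whose argument is essentially the gauge-action/AF-core route you sketch in your final paragraph. Your main existence construction, however, has a fatal flaw, and it is exactly the point you defer as ``bookkeeping'': the weighted path-space operators cannot form a Cuntz--Krieger $E$\nb-family. First, $\sum_{e\in s^{-1}(v)}\hat S_e\hat S_e^*$ annihilates the vacuum vector $\xi_v$ (every $\hat S_e^*$ kills $\xi_v$), whereas $\hat P_v\xi_v=\xi_v$, so relation (3) fails no matter how the weights are chosen; what you have built is a representation of the Toeplitz--Cuntz--Krieger algebra, and the universal property of $C^*(E)$ does not produce the map $\pi$. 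Second, your adjoint formula is inconsistent with the inner product: since $n_{e\mu}=n_\mu/|s^{-1}(s(e))|$, the actual adjoint is $\hat S_e^*\xi_{e\mu}=|s^{-1}(s(e))|^{-1}\xi_\mu$, which gives $\hat S_e^*\hat S_e=|s^{-1}(s(e))|^{-1}\hat P_{r(e)}$ and violates relation (2); with the constant $|s^{-1}(s(e))|$ you wrote, adjointability fails instead. Either way the compression $p\pi(\cdot)p$ lives only on the Toeplitz algebra, and showing that it descends to $C^*(E)$ amounts to redoing the core argument you were trying to avoid.

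The gauge-action paragraph is the viable proof (and the one in Ara--Goodearl), but as written it is also gapped. The matrix blocks of $\cspn\{S_\mu S_\nu^*:|\mu|=|\nu|=n\}$ are indexed by the common \emph{range} $r(\mu)=r(\nu)$ (otherwise $S_\mu S_\nu^*$ is not even defined as a matrix unit), not by a common source; on such a block $\Psi$ sends $S_\mu S_\mu^*\mapsto n_\mu P_{s(\mu)}$ with both the weight and the target projection varying along the diagonal, so it is a weighted diagonal map, not ``the normalized trace of that matrix block rescaled by $P_{s(\mu)}$''. You must also verify that $\Psi$ is well defined and compatible with the inductive system, i.e.\ that $\Psi(S_\mu S_\mu^*)=\sum_{e\in s^{-1}(r(\mu))}\Psi(S_{\mu e}S_{\mu e}^*)$; this reduces to the identity $n_\mu=\sum_{e\in s^{-1}(r(\mu))}n_{\mu e}$, which is precisely where row-finiteness and the definition of $n_\mu$ enter and is the real content of the existence proof. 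Finally, faithfulness of a positive map on an AF inductive limit does not follow from faithfulness on each finite stage alone; one needs an additional argument (e.g.\ compatible norm-one expectations onto the finite stages). None of these gaps is unfixable, but together they constitute the substance of the theorem, and your primary construction does not supply them.
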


Before we proceed, let us recall some natural conditions involving general conditional expectations that will be used in the sequel:

\begin{definition}\label{defesperançacondicional}
Let $B\sbe A$ be \cstar{}algebras and let $P: A \rightarrow B$ be a conditional expectation. We say that:
	\begin{enumerate}
		\item $P$ is \emph{faithful} if $P(a^*a) = 0$ for some $a \in A$ implies $a=0$,
		\item $P$ is \emph{almost faithful} if $P((ab)^*ab) = 0$ for all $b \in A$ and some $a \in A$ implies $a=0$,
		\item $P$ is \emph{symmetric} if $P(a^*a)=0$ for some $a \in A$ implies $P(aa^*)=0$.
	\end{enumerate}
\end{definition}

\begin{remark}
	Not every almost faithful conditional expectation is faithful. For instance the state of $\Mat_2(\C)$ that reads off the first entry when viewed as a conditional expectation onto $\C\sbe \Mat_2(\C)$  is almost faithful, but not faithful.
\end{remark}

Now, if $(A_i)_{i \in I}$ is a family of \cstar{}algebras containing a common \cstar{}subalgebra $A_0$ with almost faithful conditional expectations  $P_i: A_i \to A_0$, then the \emph{reduced amalgamated free product} of $(A_i)_{i \in I}$ over $A_0$ with respect to $(P_i)_{i\in I}$ is the pair $(A,P)$ consisting of a \cstar{}algebra $A$ containing $A_i$ as a generating family of \cstar{}subalgebras and an almost faithful condition expectation $P\colon A\to A_0$ that restricts to $P_i$ on $A_i$ and such that
\begin{equation}\label{eq:free-condition-exp}
P(a_1\cdot a_2 \cdots a_n) = 0
\end{equation}
whenever $a_k\in \ker(P_{i_k})\sbe A_{i_k}$ and $i_1, \ldots,i_n \in  I$ satisfy $i_1 \neq i_2$, $i_2\not= i_3,\ldots$, $i_{n-1} \neq i_n$.
In this situation, we shall use the notation
$$(A,P)=\rfreeprod{A_0}{(A_i,P_i)}$$
or just $A=\rfreeprod{A_0}{A_i}$ when the conditional expectations involved are implicit. The construction (hence the existence) of reduced amalgamated free products goes back to works by Voiculescu \cite{Voiculescu:Symmetries}, see also \cite{Dykema:Exactness, Voiculescu-Dykema-Nica:Free}. 

Now we are ready to recall the definition of reduced \cstar{}algebras of separated graphs from \cite{Ara-Goodearl:C-algebras_separated_graphs}:

\begin{definition}
Let $(E,C)$ be finitely separated graph $(E,C)$. The reduced \cstar{}algebra of $(E,C)$ is defined as the reduced amalgamated free product
$$\CECR=\rfreeprod{C_0(E^0)}{C^*(E_X)}$$
for $X$ varying over $C$ and each $C^*(E_X)$ is endowed with the canonical conditional expectation from Proposition~\ref{esperançacondicional1}.
\end{definition}

Notice that all the graphs $E_X$ are row-finite by the assumption on finite separation of $(E,C)$. Thus the canonical conditional expectations on $C^*(E_X)$ for $X\in C$ exist by Proposition~\ref{esperançacondicional1}.
And, by definition of the reduced amalgamated free product, $C^*_r(E,C)$ is a \cstar{}algebra containing copies of all the graph \cstar{}algebras $C^*(E_X)$ for $X\in C$ and we have a canonical conditional expectation $\P_r\colon C^*_r(E,C)\to C_0(E^0)$. Using the embeddings $C^*(E_X)\into C^*_r(E,C)$ we view the generating projections $P_v$ and partial isometries $S_e$ of $C^*(E,C)$ also as generators for $C^*_r(E,C)$.
These satisfy the same relations as for $C^*(E,C)$ or for its algebraic companion $L(E,C)$, and hence there is a unique (surjective) *-homomorphism $\Lambda\colon C^*(E,C) \to \CECR$ sending all projections and partial isometries to their canonical images. Moreover, it is shown in \cite[Theorem 3.8]{Ara-Goodearl:C-algebras_separated_graphs} that the restriction of this map to $L(E,C)$ is injective, so we get an embedding $\Lambda\colon L(E,C)\into C^*_r(E,C)$. For ordinary (non-separated graphs) $E$, that is, for the trivial separation $C_v=s^{-1}(v)$ on $E$, we have $C^*(E,C)\cong C^*_r(E,C) \cong C^*(E)$, the usual \cstar{}algebra of $E$, but in general $C^*_r(E,C)$ is a proper quotient of $C^*(E,C)$. 

Returning to a general finitely separated graph $(E,C)$, since all the conditional expectations $C^*(E_X)\to C_0(E^0)$ are faithful, 
it follows from \cite[Theorem 2.1]{Ivanov:structure_amalgamated_free_product} that $\P_r\colon C^*_r(E,C)\to C_0(E^0)$ is faithful as well.

We now introduce a key concept:

\begin{definition}
	\label{def:free-label}
Let $E$ be a directed graph and let $\mathbb F$ be the free group on $E^1$. The
	{\it free label} $\f \colon E^1 \to\mathbb F$ is the map sending each $e\in E^1$ to the canonical generator $\f (e)$ of $\mathbb F$ corresponding to $e$.  
	The free label is universal in the sense that any other labeling $c\colon E^1\to G$ is induced by a unique group homomorphism $c\colon\mathbb F \to G$, which we also denote by $c$ by abuse of notation.
	\end{definition}

In our next result, we use the coaction associated to the free label to show that it can be extended to the $*$-subsemigroup of $L(E,C)\subseteq C^*(E,C)$ generated by $E^0\cup E^1$.

\begin{lemma}
	\label{lem:extending-freelabel}
	Let $(E,C)$ be a separated graph, and consider the $*$-subsemigroup $S(E,C)$
	of $L(E,C)$ generated by $E^0\cup E^1$. Then there is a unique $*$-map $\mathfrak f \colon S(E,C)\setminus \{0\} \to \mathbb F$ such that $\mathfrak f (S_e)$ is the canonical generator of $\mathbb F$ corresponding to $e\in E^1$, and $\mathfrak f (S_\mu \cdot S_\nu) = \mathfrak f (S_\mu)\mathfrak f (S_\nu)$ whenever $S_\mu , S_\nu \in S(E,C)$ and $S_\mu \cdot S_\nu \ne 0$ in $S(E,C)$.
	\end{lemma}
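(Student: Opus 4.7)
The plan is to extract $\mathfrak{f}$ from the spectral decomposition of the coaction $\delta_\mathfrak{f}\colon C^*(E,C)\to C^*(E,C)\otimes C^*(\mathbb{F})$ associated to the free label, whose existence is granted by Theorem~\ref{isomordeltamaximal}. By construction this coaction acts on the generators by
\[
\delta_\mathfrak{f}(P_v)=P_v\otimes 1,\qquad \delta_\mathfrak{f}(S_e)=S_e\otimes \mathfrak{f}(e),
\]
and consequently $\delta_\mathfrak{f}(S_e^*)=S_e^*\otimes \mathfrak{f}(e)^{-1}$. Hence if $s\in S(E,C)\setminus\{0\}$ is written as any product $s=a_1\cdots a_n$ with each $a_i\in\{P_v:v\in E^0\}\cup\{S_e,S_e^*:e\in E^1\}$, the multiplicativity of $\delta_\mathfrak{f}$ yields
\[
\delta_\mathfrak{f}(s)=s\otimes g,
\]
where $g\in\mathbb{F}$ is the product of the corresponding group elements ($1$ for a $P_v$, $\mathfrak{f}(e)$ for an $S_e$, $\mathfrak{f}(e)^{-1}$ for an $S_e^*$).

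The crucial point is that $g$ depends only on $s$, not on the chosen factorization. Indeed, if two factorizations of the same nonzero $s$ produced group elements $g$ and $g'$, then $s\otimes g=\delta_\mathfrak{f}(s)=s\otimes g'$; since $s\neq 0$ in $C^*(E,C)$ and distinct elements of $\mathbb{F}$ remain linearly independent in $C^*(\mathbb{F})$ (by the injectivity of $\mathbb{F}\hookrightarrow C^*(\mathbb{F})$), the defining property of the minimal tensor product forces $g=g'$. I can therefore \emph{define} $\mathfrak{f}(s):=g$ unambiguously, and the assignment $\mathfrak{f}(S_e)=\mathfrak{f}(e)$ is built in.

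The remaining properties then follow formally from $\delta_\mathfrak{f}$ being a $*$-homomorphism. If $s,t\in S(E,C)$ with $st\neq 0$, then
\[
\delta_\mathfrak{f}(st)=\delta_\mathfrak{f}(s)\delta_\mathfrak{f}(t)=(s\otimes\mathfrak{f}(s))(t\otimes\mathfrak{f}(t))=st\otimes \mathfrak{f}(s)\mathfrak{f}(t),
\]
so $\mathfrak{f}(st)=\mathfrak{f}(s)\mathfrak{f}(t)$; and $\delta_\mathfrak{f}(s^*)=\delta_\mathfrak{f}(s)^*=s^*\otimes \mathfrak{f}(s)^{-1}$ gives the $*$-compatibility $\mathfrak{f}(s^*)=\mathfrak{f}(s)^{-1}$. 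Uniqueness is immediate: any $*$-map with the stated properties is determined on generators, and hence on all of $S(E,C)\setminus\{0\}$, by multiplicativity.

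The only subtle step is the well-definedness of $g$, which could in principle fail because the defining relation~(3) of Definition~\ref{defC*grafosep} is a sum relation not visible inside the purely multiplicative semigroup $S(E,C)$, so direct verification would require inspecting all identifications that arise from combining relation~(3) with relations~(1) and~(2). Routing the argument through the coaction $\delta_\mathfrak{f}$, whose well-definedness was already established in Theorem~\ref{isomordeltamaximal}, sidesteps this combinatorial check entirely.
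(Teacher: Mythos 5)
Your proposal is correct and follows essentially the same route as the paper: both obtain well-definedness of $\mathfrak f$ by applying the coaction $\delta_{\mathfrak f}$ from Theorem~\ref{isomordeltamaximal} to a nonzero element $s\in S(E,C)$, observing that $\delta_{\mathfrak f}(s)=s\otimes g$ for the group element $g$ read off from any factorization, and concluding $g=g'$ from $s\otimes g=s\otimes g'$ with $s\neq 0$ because distinct elements of $\mathbb F$ are linearly independent in $C^*(\mathbb F)$. The multiplicativity, $*$-compatibility and uniqueness arguments are likewise the same.
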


\begin{proof}
	 As observed in Subsection \ref{subsection:skewproductsofgraphs}, we can extend the map  $\f \colon E^1 \to\mathbb F$ to a $*$-functor $ \f \colon \text{Path} (\hat{E})\to \mathbb F$, defined by 
	$$\f (v)= 1, \quad \text{and} \quad \f (a_1a_2\cdots a_r) = 
	\f (a_1)\f (a_2)\cdots \f (a_r),$$
	where $v\in E^0$, $\f (e^*)= \f (e)^{-1}$ for $e\in E^1$, and $a_1,\dots , a_r\in \hat{E}^1$. 
	
	There is a canonical surjective $*$-map $\pi \colon \text{Path} (\hat{E})\to S(E,C)$
sending a path $\mu \in \text{Path}(\hat{E})$ to its canonical image $\pi (\mu):=S_\mu$ in $L(E,C)$.
Note that $\pi(\mu \nu)=\pi(\mu)\pi(\nu)$ whenever $r(\mu)= s(\nu)$. We define $\ol{\f}\colon S(E,C)\setminus \{0\}\to \mathbb F$ by 
$\ol{\f}(\pi (\mu))= \f (\mu)$. We will use in the course of this proof the notation $\ol{\f}$ in order to distinguish the two maps $\ol{\f}$ and $\f$, denoted elsewhere in the paper by the same symbol $\f$.

Let $\mu,\nu \in \text{Path}(\hat{E})$ be such that $\pi(\mu)= \pi (\nu)\ne 0$. We will show that $\f (\mu)= \f (\nu)$. Let $\delta_{\f}\colon C^*(E,C)\to C^*(E,C)\otimes C^*(\mathbb F)$ be the coaction defined in Theorem \ref{isomordeltamaximal}.

By the multiplicative properties of $\delta _{\f}$,  $\pi$ and $\f$, we have $\delta _{\f} (\pi (\mu)) = \pi (\mu)\otimes \f (\mu)$,
and similarly for $\delta_{\f} (\pi (\nu))$. But now, since $\pi(\mu)= \pi (\nu)$, we have
$$\pi (\mu)\otimes \f (\mu) = \delta _{\f} (\pi (\mu)) = \delta _{\f} (\pi (\nu)) = \pi (\nu)\otimes \f (\nu)= \pi (\mu)\otimes \f (\nu).$$
Since $\f (\mu)$ and $\f (\nu)$ are basis elements of $\C [\mathbb F]\subseteq C^*(\mathbb F)$ and $\pi (\mu)\ne 0$, we get $\f (\mu)= \f (\nu)$, as desired.

Hence we have a well-defined $*$-map $\ol{\f}\colon S(E,C)\to \mathbb F$. Suppose that $\mu,\nu\in \text{Path}(\hat{E})$ satisfy that $S_\mu \cdot S_\nu \ne 0$. Then $r(\mu)=s(\nu)$, and 
$$\ol{\f}(S_\mu\cdot S_\nu)= \ol{\f}(S_{\mu\nu})= \ol{\f}(\pi(\mu
\nu))= \f (\mu\nu)= \f(\mu)\f(\nu)= \ol{\f}(S_\mu)\ol{\f}(S_\nu),$$
as desired.  
\end{proof}

We now provide some useful observations about the $*$-semigroup $S(E,C)$. 
The structure of this $*$-semigroup and other related semigroups will be further studied in a forthcoming paper.

 Let $\mu$ be a path in $\hat{E}$. If $\mu$ is not $C$-separated then it can be reduced to a new path
 $\mu'$ such that $|\mu'|= |\mu|-2$, or to $0$, by replacing a subword $e^*f$, with $e,f\in X\in C$, with $\delta_{e,f}r(e)$. Note that either $S_{\mu'}=S_\mu$ or $S_\mu=0$. Continuing in this way (in case that $\mu'\ne 0$), we see that either $S_\mu=0$ or it can be reduced to a $C$-separated path $\nu$ such that $S_{\mu}=S_{\nu}$. 
 If $S_{\nu}$ is not weakly reduced, then it can be reduced by replacing a subword of the form $ee^*$, with $\{ e\}\in C$, with $s(e)$, to another path $\nu'$ such that $|\nu'|=|\nu|-2$ and $S_{\nu'}=S_{\nu}$. It might be that $\nu'$ is not $C$-separated, but in any case we can apply the above process to it in order to arrive ,  after finitely many steps,  to either a $C$-separated weakly reduced path $\zeta$ such that $S_{\zeta}=S_\mu$, or to $0$. Of course, the latter case occurs if and only if $S_\mu=0$. Note also that $s(\zeta) = s(\mu)$ and $r(\zeta)= r(\mu)$.

Write $\mathcal C:= \text{Path}(\hat{E})$, and $\mathcal C_0:=\mathcal C\cup \{0\}$. The reductions in $\mathcal C$ of the form $\mu_0e^*e\mu_1\mapsto \mu_0\mu_1$ and $\mu_0ee^*\mu_1\mapsto \mu_0\mu_1$, for $e\in E^1$, will be called free-group reductions. It is clear that if $\mu'$ is obtained from $\mu$ by applying a finite number of free-group reductions, then $\f(\mu) = \f(\mu')$. 
We will write $\mu \equiv \mu'$ if $\f(\mu) = \f(\mu')$, for $\mu,\mu'\in \mathcal C$. As observed before, if $\mu\in \mathcal C$, then either it can be reduced to $0$ by using the reduction rules $e^*f\mapsto \delta_{e,f} r(e)$ if $e,f\in X$, for $X\in C$, and $ee^*\mapsto s(e)$ if $\{e\}\in C$ (and hence $S_\mu =0$), or it can be reduced using solely free-group reductions of the form $e^*e\mapsto r(e)$ for $e\in E^1$ and $ee^*\mapsto s(e)$ for $e\in E^1$ such that $\{ e\}\in C$, to a $C$-separated weakly reduced path $\mu'$, so that in particular $S_\mu = S_{\mu'}$ and $\mu \equiv \mu'$.

\begin{lemma}
	\label{lem:formula-for-P}
	Let $(E,C)$ be a finitely separated graph. Then we have the formula \begin{equation}
	\label{eq:condexpeconbasiselements}
	P(S_{\mu_1} S_{\nu_1}^*\ldots S_{\mu_n} S_{\nu_n}^*) =	N_{\mu}P_{s(\mu)}
	\end{equation} 
	for each $C$-separated weakly reduced path $\mu\defeq \mu_1\nu_1^* \cdots \mu_n \nu_n^*$, where $N_{\mu}$ is a rational number. Moreover $P(S_{\mu_1} S_{\nu_1}^*\ldots S_{\mu_n} S_{\nu_n}^*)=0$ whenever $\f (\mu)\ne 1$, where $\f (\mu)$ is the canonical image of $\mu$ in the free group $\mathbb F$ generated by $E^1$. 	
\end{lemma}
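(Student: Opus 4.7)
The strategy is to exploit the $\mathbb F$-grading on $C^*(E,C)$ coming from the coaction $\delta_{\f}\colon C^*(E,C) \to C^*(E,C)\otimes C^*(\mathbb F)$ associated via Theorem~\ref{isomordeltamaximal} to the free label $\f\colon E^1\to \mathbb F$ of Definition~\ref{def:free-label}. The multiplicative behaviour of $\delta_{\f}$ on the generators gives $\delta_{\f}(S_\mu)=S_\mu\otimes \f(\mu)$, where $S_\mu:=S_{\mu_1}S_{\nu_1}^*\cdots S_{\mu_n}S_{\nu_n}^*$, so that $S_\mu$ lies in the spectral subspace of $\delta_{\f}$ indexed by $\f(\mu)$.

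The key point is to show that $P$ is $\mathbb F$-equivariant, i.e., that it vanishes on every spectral subspace of $\delta_{\f}$ other than the unit fiber (on which it restricts to $P$ itself). Since $P$ factors as $C^*(E,C)\twoheadrightarrow C^*_r(E,C)\xrightarrow{P_r}C_0(E^0)$, where $P_r$ is the faithful conditional expectation coming from the reduced amalgamated free product structure, it suffices to observe: (i) the quotient map is graded because it preserves the Cuntz--Krieger generators; and (ii) the reduced free product expectation $P_r$ inherits the equivariance from the individual expectations $\phi_X\colon C^*(E_X)\to C_0(E^0)$ of Proposition~\ref{esperançacondicional1}, each of which annihilates $S_\mu S_\nu^*$ unless $\mu=\nu$, a condition equivalent to $\f(\mu\nu^*)=1$, combined with the defining vanishing property of $P_r$ on alternating products from the kernels of the $\phi_X$'s.

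Applying this equivariance to $S_\mu$ yields the identity $P(S_\mu)\otimes \f(\mu)=P(S_\mu)\otimes 1$ in $C_0(E^0)\otimes C^*(\mathbb F)$. Since $1$ and $\f(\mu)$ are linearly independent in $\C[\mathbb F]\sbe C^*(\mathbb F)$ whenever $\f(\mu)\neq 1$, this forces $P(S_\mu)=0$ in that case, establishing the second assertion. For the first assertion, the relation $S_\mu=P_{s(\mu)}S_\mu P_{s(\nu_n)}$ together with the $C_0(E^0)$-bimodule property of $P$ places $P(S_\mu)$ in $P_{s(\mu)}\cdot C_0(E^0)\cdot P_{s(\nu_n)}$, which equals $\C\cdot P_{s(\mu)}$ if $s(\mu)=s(\nu_n)$ and is zero otherwise; either way $P(S_\mu)=N_\mu P_{s(\mu)}$ for a scalar $N_\mu$. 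Rationality of $N_\mu$ will follow by tracking its computation inductively through $P_r$: each $\phi_X$ takes values in $\mathbb Q\cdot P_v$ (the coefficients $n_\mu$ from Proposition~\ref{esperançacondicional1} are reciprocals of products of integers), and an alternating decomposition of $S_\mu$ into $\ker\phi_X$-summands preserves this rationality.

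The main obstacle is establishing the $\mathbb F$-equivariance of $P$ in the second step, which rests on verifying that the reduced free product expectation $P_r$ is compatible with the global $\mathbb F$-grading on $C^*_r(E,C)$. Once that is in place, both assertions become essentially formal consequences of the spectral decomposition together with the bimodule and rationality properties of $P$.
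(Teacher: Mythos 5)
Your overall strategy---exploit the $\mathbb F$\nb{}grading coming from the free label together with the freeness relation \eqref{eq:free-condition-exp}---is the same mechanism that drives the paper's own proof, and your bimodule observation $P(S_\mu)=P_{s(\mu_1)}P(S_\mu)P_{s(\nu_n)}$ is a clean way to see that $P(S_\mu)$ must be a \emph{complex} multiple of $P_{s(\mu)}$. The problem is the step you yourself flag as the main obstacle: the claim that $P$ (equivalently $P_r$) vanishes on every spectral subspace of $\delta_{\f}$ of degree $\neq 1$. On the spanning elements this claim \emph{is} the ``moreover'' part of the lemma, so it cannot be taken as an input; and the justification you offer---that each expectation $P_X$ on $C^*(E_X)$ kills $S_\alpha S_\beta^*$ unless $\alpha=\beta$, ``combined with the defining vanishing property of $P_r$ on alternating products from the kernels''---does not apply directly, because a basis element $S_{\mu_1}S_{\nu_1}^*\cdots S_{\mu_n}S_{\nu_n}^*$ is in general \emph{not} an alternating product of elements of the kernels $\ker(P_X)$. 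Each $\mu_i$, $\nu_i$ may pass through several colours, and at a junction $\mu_i\nu_i^*$ whose two terminal edges coincide and lie in the same $X$, the corresponding monochromatic letter $S_\lambda S_eS_e^*S_\tau^*$ has a nonzero $P_X$\nb{}component.

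To make your argument work one must first rewrite the word as an alternating product of homogeneous letters from the individual factors $C^*(E_X)$, split each letter into its $C_0(E^0)$\nb{}part plus a kernel part (the paper does this with the elements $\beta_e=S_eS_e^*-\tfrac{1}{|X|}P_v$), absorb the scalar parts into neighbouring letters, recombine letters that thereby become adjacent with the same colour, and induct on the length. That reduction is the entire content of the paper's proof (its case analysis (1), (2a), (2b) and the induction on $|\mu|$), and it is also what yields the \emph{rationality} of $N_\mu$, since the only scalars produced are the $\tfrac{1}{|X|}$ and the coefficients $n_\mu$ from Proposition~\ref{esperançacondicional1}. Your proposal records that such an ``alternating decomposition into $\ker P_X$\nb{}summands'' will do the job, but never supplies it. In short: the skeleton is correct and coincides with the paper's, but the central vanishing/equivariance claim is asserted rather than proved, and proving it requires essentially the whole inductive computation that the paper carries out.
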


\begin{proof}
	For $X\in C_v$ such that $|X|>1$ and $e\in X$, we write
	$$\beta _e= S_eS_e^*- \frac{1}{|X|}\cdot P_v .$$
	Note that $S_\lambda \beta _e S_\tau^* \in \ker (P_X)$ for each $e\in X$ and each pair of paths $\lambda ,\tau$ in $E_X$ such that $r(\lambda)= r(\tau)= v$.   
	
	Let $\mu $ be a $C$-separated weakly reduced path.
	We first deal with the case where $\mu = \mu _1\nu_1^*$, where $\mu_1$ and $\nu_1$ are paths in $E$. We can express $\mu$ in one of the following forms:
	\begin{enumerate}
		\item Either 	$$\mu = \lambda _1\cdots \lambda_j\lambda _{j+1}^* \cdots \lambda_{j+r}^*,$$
		where $j,r\ge 0$, $\lambda _i$ are paths of positive length in $E_{X_{l_i}}$ for $i=1,\dots , j+r$, and $X_{l_{i}}\ne X_{l_{i+1}}$ for $i=1,\dots , j+r-1$. If $j=r=0$, we interpret this path as $\mu = v$,
		\item or
		$$\mu = \lambda _1\cdots \lambda_j\cdot \delta \cdot \lambda _{j+1}^* \cdots \lambda_{j+r}^*,$$
		where $j,r\ge 0$, $\lambda _i$ are paths of positive length in $E_{X_{l_i}}$ for $i=1,\dots , j+r$, $\delta = \lambda ef^* \tau^* $, where $v:=s(\lambda) =r(\lambda )= s(e)=s(f)= r(\tau)= s(\tau)$, and $\lambda e$, $\tau f$ are paths in $E_X$ for some $X\in C_v$. Moreover we have $X_{l_{i}}\ne X_{l_{i+1}}$ for $i=1,\dots , j-1$, $X_{l_j}\ne X$, $X\ne X_{l_{j+1}}$, and $X_{l_{j+i}}\ne X_{l_{j+i+1}}$ for $i=1,\dots , r-1$. 
	\end{enumerate}
	In case (1), we have  $P(S_\mu )= P(S_{\lambda _1}\cdots S_{\lambda_j} \cdot S_{\lambda _{j+1}}^* \cdots S_{\lambda_{j+r}}^*)=0$ by \eqref{eq:free-condition-exp}, except in the case where $\mu = v\in E^0$. 
	
	In case (2), we distinguish two subcases:
	
	(2a) $e\ne f$.  In this case, $P_X(S_\delta)= 0$ and so $P(S_\mu)= 0$ by \eqref{eq:free-condition-exp}. 
	
	(2b) $e=f$. In this case, we can write 
	$$S_\mu = \frac{1}{|X|}S_{\lambda _1}\cdots S_{\lambda_j}S_{\lambda} S_{\tau}^* S_{\lambda_{j+1}}^* \cdots S_{\lambda_{j+r}}^*+ S_{\lambda_1}\cdots S_{\lambda_j}(S_{\lambda} \beta_e S_{\tau}^*)S_{\lambda_{j+1}}^* \cdots S_{\lambda_{j+r}}^*,$$
	where $e\in X\in C$. 
	We have $P(S_{\lambda_1}\cdots S_{\lambda_j}(S_{\lambda} \beta_e S_{\tau}^*)S_{\lambda_{j+1}}^* \cdots S_{\lambda_{j+r}}^*)=0$ by \eqref{eq:free-condition-exp}, and thus after reducing $\gamma := \lambda _1\cdots \lambda_j \lambda \tau^* \lambda_{j+1}^* \cdots \lambda_{j+r}^*$ we get a $C$-separated weakly reduced path $\gamma'$ (or $0$), such that $S_{\gamma}=S_{\gamma'}$ and $\gamma' \equiv \gamma \equiv \mu$. Applying induction on the length of $\mu$ (as a $C$-separated weakly reduced path), we deduce that $P(S_\mu)=P(S_{\gamma'})$ is of the form
	$N_\mu P_{s(\mu)}$, where $N_\mu$ is a rational number.  
	
	Furthermore, suppose that $\f (\mu) \ne 1$ as an element of $\mathbb F$, the free group on $E^1$. Then, since $\mu \equiv \gamma'$, it follows by induction that $P(S_{\gamma'})=0$, and thus $P(S_\mu)= P(S_{\gamma'})=0$. 
	
	We now consider the general case where $\mu = \mu_1\nu_1^*\cdots \mu_n\nu_n^*$ is a $C$-separated weakly reduced path. If all the 
	paths $\mu_i \nu_i^*$ are in cases (1) or (2a) above, then we get
	$P(S_\mu)=0$. For those $\mu_i\nu_i^*$ which are in case (2b) above, we can 
	replace $S_{\mu_i} S_{\nu_i}^*$ by $(1/|X_i|)S_{\mu_i'}(S_{\nu_i'})^*+S_{\mu_i'}\beta_{e_i}(S_{\nu_i'})^*$, where $\mu_i=\mu_i'e_i$ and $\nu_i= \nu_i'e_i$, and $e_i\in X_i\in C$.
	
	Let $i_1,i_2,\dots , i_t$ be the set of indices $i$ for which $\mu_i\nu_i^*$ is of the form (2b). When we perform all the substitutions mentioned above, and we expand them as a sum of products, all of the terms but one will involve a factor of the form $S_{\mu_i'} (S_{\nu_i'})^*$, which has length strictly less than the length of $S_{\mu_i} S_{\nu_i}^*$. Therefore, we can write 
	\begin{align*}
		S_\mu & = (S_{\mu_1}S_{\nu_1}^*)\cdots (S_{\mu_{i_1}'}\beta_{e_i}(S_{\nu_{i_1}'})^*)\cdots  (S_{\mu_{i_2}'}\beta_{e_2}(S_{\nu_{i_2}'})^*)\cdots (S_{\mu_{i_t}'}\beta_{e_t}(S_{\nu_{i_t}'})^*)\cdots (S_{\mu_n}S_{\nu_n}^*)\\ &  +  \sum_{p=1}^M d_pS_{\delta_p},
	\end{align*}
	where $\delta_p$ are paths in $\hat{E}$ of length strictly smaller than the length of $\mu$, and $d_p$ are rational numbers. Now, for a given $p$, either $S_{\delta_p}=0$, or $\delta _p$ can be reduced using free-group reductions to a $C$-separated weakly reduced path $\delta_p'$, representing the same element as $\delta_p$ in $C^*(E,C)$, that is, $S_{\delta_p}=S_{\delta_p'}$,  and in the latter case we have $\delta _p' \equiv \delta _p \equiv \mu$, so that $\mathfrak f (\mu)= \mathfrak f (\delta_p')$.   
	
	Since 
	$$P((S_{\mu_1}S_{\nu_1}^*)\cdots (S_{\mu_{i_1}'}\beta_{e_i}(S_{\nu_{i_1}'})^*)\cdots  (S_{\mu_{i_2}'}\beta_{e_2}(S_{\nu_{i_2}'})^*)\cdots (S_{\mu_{i_t}'}\beta_{e_t}(S_{\nu_{i_t}'})^*)\cdots (S_{\mu_n}S_{\nu_n}^*) ) =0,$$
	we can apply induction to get our result. Moreover, if $\mathfrak f(\mu)\ne 1$, then we get by induction that $P(S_{\delta_p})=P(S_{\delta'_p})=0$ for all $p$, and so $P(S_\mu)=0$. 
\end{proof}

For later use, we also need to study the invariance of $P$ with respect to a group action, as follows.

\begin{lemma}
	\label{lem:formula-for-P-groupaction}
	Let $(E,C)$ be a finitely separated graph and let $\alpha $ be an action of a group $G$ on $(E,C)$. Then we have $N_{\alpha_g(\mu)}=N_\mu$ for each $C$-separated weakly reduced path $\mu =\mu_1\nu_1^* \cdots \mu_n \nu_n^*$.
	\end{lemma}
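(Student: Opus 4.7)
The strategy is to show that the canonical conditional expectation $P\colon C^*(E,C)\to C_0(E^0)$ is equivariant with respect to the induced actions of $G$, and then compare $P$ evaluated on the path $S_{\mu_1}S_{\nu_1}^*\cdots S_{\mu_n}S_{\nu_n}^*$ with $P$ evaluated on its $\alpha_g$-translate. Concretely: since $\alpha_g$ permutes the separations, the induced automorphism of $C^*(E,C)$ (still denoted $\alpha_g$) satisfies $\alpha_g(C^*(E_X))=C^*(E_{\alpha_g(X)})$ for every $X\in C$. On each subgraph $E_X$, the canonical conditional expectation $\phi_{E_X}$ given by Proposition~\ref{esperançacondicional1} is determined by the numbers $n_\mu=\prod_i|s^{-1}(s(\mu_i))|^{-1}$, and since $\alpha_g$ restricts to a bijection $s^{-1}(v)\to s^{-1}(\alpha_g(v))$ for every $v\in E^0$, we have $n_\mu=n_{\alpha_g(\mu)}$. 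Hence $\phi_{E_{\alpha_g(X)}}\circ\alpha_g|_{C^*(E_X)}=\alpha_g\circ\phi_{E_X}$.

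By the universal property of the amalgamated free product (Proposition~\ref{propcoproduto}) the family $\{\phi_{E_X}\}_{X\in C}$ is compatible with $\alpha_g$, and so the canonical conditional expectation $P\colon C^*(E,C)\to C_0(E^0)$ that extends them satisfies $\alpha_g\circ P=P\circ \alpha_g$, where $\alpha_g$ acts on $C_0(E^0)$ by $P_v\mapsto P_{\alpha_g(v)}$.

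Applying this equivariance to our path, we compute on the one hand
\begin{align*}
P\bigl(\alpha_g(S_{\mu_1}S_{\nu_1}^*\cdots S_{\mu_n}S_{\nu_n}^*)\bigr)
 &=\alpha_g\bigl(P(S_{\mu_1}S_{\nu_1}^*\cdots S_{\mu_n}S_{\nu_n}^*)\bigr)\\
 &=\alpha_g(N_\mu P_{s(\mu)})=N_\mu P_{\alpha_g(s(\mu))}.
\end{align*}
On the other hand, because $\alpha_g$ preserves the separation and sends $C$-separated weakly reduced paths to $C$-separated weakly reduced paths, the path $\alpha_g(\mu)=\alpha_g(\mu_1)\alpha_g(\nu_1)^*\cdots\alpha_g(\mu_n)\alpha_g(\nu_n)^*$ is again $C$-separated and weakly reduced, with $s(\alpha_g(\mu))=\alpha_g(s(\mu))$. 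Applying Lemma~\ref{lem:formula-for-P} to $\alpha_g(\mu)$ gives
\[
P\bigl(\alpha_g(S_{\mu_1}S_{\nu_1}^*\cdots S_{\mu_n}S_{\nu_n}^*)\bigr)
 =N_{\alpha_g(\mu)}P_{\alpha_g(s(\mu))}.
\]
Comparing the two expressions and using that the projections $P_v$ are nonzero and linearly independent in $C_0(E^0)$ yields $N_{\alpha_g(\mu)}=N_\mu$.

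The step I expect to require the most care is the equivariance of $P$. If one prefers not to invoke universal properties directly, an alternative is to track the inductive reduction used in the proof of Lemma~\ref{lem:formula-for-P}: the substitutions $e^*f\mapsto\delta_{e,f}r(e)$ for $e,f\in X$, the replacements $ee^*\mapsto s(e)$ for $\{e\}\in C$, and the decomposition $S_eS_e^*=\tfrac{1}{|X|}P_v+\beta_e$ all depend only on the combinatorial data $(|X|)_{X\in C}$, which is preserved under $\alpha_g$. Thus the reduction algorithm run on $\mu$ and on $\alpha_g(\mu)$ produces the same rational coefficient at every stage, giving $N_\mu=N_{\alpha_g(\mu)}$ by induction on the length of $\mu$.
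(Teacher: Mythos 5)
Your proposal is correct, but your main argument runs along a genuinely different route from the paper's. The paper proves the lemma combinatorially: it reruns the inductive reduction from Lemma~\ref{lem:formula-for-P} on $g\cdot\mu$, observing case by case that the forms (1), (2a), (2b) are preserved by the action, that $|g\cdot X|=|X|$, and that the reduction of $g\cdot\tau$ is $g\cdot\tau'$ -- this is exactly the ``alternative'' you sketch in your final paragraph. Your primary argument instead establishes the equivariance $P\circ\alpha_g=\alpha_g\circ P$ abstractly and then reads off $N_{\alpha_g(\mu)}=N_\mu$ by comparing the formula of Lemma~\ref{lem:formula-for-P} on $\mu$ and on $\alpha_g(\mu)$ (using that $\alpha_g$ preserves $C$-separated weakly reduced paths and that the $P_v$ are nonzero). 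This is sound and arguably cleaner, and it has the bonus of directly delivering the equivariance of $P$, which is precisely what the paper later extracts from this lemma in the lemma preceding Theorem~\ref{isomorfismo2}; there is no circularity, since your equivariance proof does not use the statement being proved. One point to tighten: the equivariance of $P$ does not follow from ``the universal property of the amalgamated free product'' in Proposition~\ref{propcoproduto}, which concerns $*$-homomorphisms out of $C^*(E,C)$, not conditional expectations. What you actually need is that a conditional expectation onto $C_0(E^0)$ restricting to $\phi_{E_X}$ on each $C^*(E_X)$ and satisfying the freeness condition~\eqref{eq:free-condition-exp} is unique, because such conditions determine it on the dense span of alternating products of kernel elements; applying this to $\alpha_{g^{-1}}\circ P\circ\alpha_g$, which satisfies the same conditions by your intertwining computation $\phi_{E_{\alpha_g(X)}}\circ\alpha_g=\alpha_g\circ\phi_{E_X}$, gives $\alpha_{g^{-1}}\circ P\circ\alpha_g=P$. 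With that justification replaced, your argument is complete; the paper's version buys a fully self-contained combinatorial proof that never needs this uniqueness characterization on the full algebra.
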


\begin{proof} First of all, note that the set of $C$-separated weakly reduced paths is invariant under the action of $G$.

	 To simplify the notation,
	we will write $g\cdot \mu$ instead of $\alpha_g(\mu)$. We will use the same notation as in Lemma \ref{lem:formula-for-P}, and also we will follow the same steps in order to show the result.
	
Assume first that $\mu = \mu _1\nu_1^*$, where $\mu_1$ and $\nu_1$ are paths in $E$. If $\mu$ is of the form (1) of Lemma \ref{lem:formula-for-P}, then 
$g\cdot \mu$ is also in the form (1), and thus $P(S_{g\cdot \mu})= P(S_\mu)= 0$ except that $\mu = v$ for $v\in E^0$, in which case we have $N_{g\cdot v} = 1 = N_v$. If $\mu$ falls in case (2a) of Lemma \ref{lem:formula-for-P}, then also $g\cdot \mu$ falls in case 2(a), and thus $N_{g\cdot \mu}= 0 = N_{\mu}$.  
Suppose that $\mu$ is in the form (2b) of Lemma \ref{lem:formula-for-P}.  
In this case, using the same notation as in that lemma, we can write 
\begin{align*}
S_{g\cdot \mu} & = \frac{1}{|X|}S_{g\cdot \lambda _1}\cdots S_{g\cdot \lambda_j} S_{g\cdot \lambda} S_{g\cdot \tau}^* S_{g\cdot \lambda_{j+1}}^* \cdots S_{g\cdot \lambda_{j+r}}^* \\ & + S_{g\cdot \lambda_1}\cdots S_{g\cdot \lambda_j}(S_{g\cdot \lambda} \beta_{g\cdot e} S_{g\cdot \tau}^*)S_{g\cdot \lambda_{j+1}}^* \cdots S_{g\cdot \lambda_{j+r}}^*,
\end{align*}
where $e\in X\in C$. 
We have $$P(S_{g \cdot \lambda_1}\cdots S_{g \cdot \lambda_j} (S_{g \cdot \lambda} \beta_{g\cdot e} S_{g \cdot \tau}^*)S_{g \cdot  \lambda_{j+1}}^* \cdots S_{g \cdot \lambda_{j+r}}^*)=0,$$ and it is a simple matter to check that the reduction of a path $g\cdot \tau$ is of the form $g\cdot \tau'$, where $\tau'$ is the reduction of $\tau$. Hence using induction on the length of $\mu$ (as a $C$-separated weakly reduced path), we deduce that $N_{g\cdot \mu}= N_\mu$.   

The general case of a $C$-separated weakly reduced path 
$\mu = \mu_1\nu_1^*\cdots \mu_n\nu_n^*$ uses a similar reasoning, with the same reduction steps from Lemma \ref{lem:formula-for-P}. We leave the details to the reader. 
\end{proof}

 \subsection{The reduced $C^*$-algebra associated with a conditional expectation}

The original definition of the reduced \cstar{}algebra $C^*_r(E,C)$ given in the previous section uses the machinery of reduced amalgamated free products.
It produces a special quotient of the full \cstar{}algebra $C^*(E,C)$ carrying a faithful conditional expectation $\P_r\colon C^*_r(E,C)\to C_0(E^0)$.  
We can then pullback the conditional expectation from $C^*_r(E,C)$ to $C^*(E,C)$ via the quotient homomorphism $\Lambda\colon C^*(E,C)\onto C^*_r(E,C)$ in order to get also a conditional expectation 
$$\P=\P_r\circ\Lambda:C^*(E,C) \rightarrow C_0(E^0).$$
We also refer to this as the canonical conditional expectation of $C^*(E,C)$. Of course, it is given by the same formula~\eqref{eq:condexpeconbasiselements} on the generators.

In this section we are going to show how to build $C^*_r(E,C)$ from $C^*(E,C)$ and the canonical conditional expectation $\P$. 

Indeed, this works in general for abstract \cstar{}algebras and conditional expectations:

\begin{proposition}[\cite{Kwasniewski-Meyer:Essential}*{Section~3.1}]\label{prop:KM-reduced-quotient-exp}
Let $A$ be a \cstar{}algebra and let $P\colon A\to B$ be a conditional expectation onto a \cstar{}subalgebra $B\sbe A$.
Let $\NN_P$ be the largest (closed, two-sided) ideal contained in $\ker(P)$, that is, the (closed) sum of all closed ideals of $A$ contained in $\ker(P)$; we call $\NN_P$ the \emph{nucleus} of $P$.
Then the quotient \cstar{}algebra $A_{P,r}\defeq A/\NN_{P}$ contains (a copy of) $B$ as a \cstar{}subalgebra and $P$ factors through an almost faithful conditional expectation $P_r\colon A_{P,r}\to B$. Moreover, 
$$\mathcal{N}_P = \{ a \in A ~|~ P((ab)^*(ab))=0 \text{ for all } b \in A\},$$
and $\mathcal{L}_P = \{ a \in A ~|~ P(a^*a)=0\} \text{ and }\mathcal{R}_P = \{ a \in A ~|~ P(aa^*)=0\}$ are the largest left and right ideals in $A$ contained in $\ker(P)$, respectively. We have
	\begin{enumerate}
		\item $P$ is symmetric if and only if $\mathcal{L}_P=\mathcal{R}_P=\mathcal{N}_P$,
		\item $P$ is faithful if and only if $\mathcal{L}_P=\mathcal{R}_P=\mathcal{N}_P=0$,
		\item $P$ is almost faithful if and only if $\mathcal{N}_P=0$,
		\item $P$ is faithful if and only if $P$ is almost faithful and symmetric.
	\end{enumerate}
\end{proposition}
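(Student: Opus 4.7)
The plan is to exploit Tomiyama's theorem (conditional expectations are positive $B$-bimodule contractions) and the Schwarz inequality $P(a)^*P(a)\leq P(a^*a)$; once the one- and two-sided ideals are correctly identified, the four equivalences follow formally.

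First I would show that $\LL_P=\{a:P(a^*a)=0\}$ is a closed left ideal and is the largest one inside $\ker P$. Closedness is by continuity of $P$; additivity comes from $(a+b)^*(a+b)\leq 2(a^*a+b^*b)$; left-multiplication closure from $(ca)^*(ca)\leq\|c\|^2\,a^*a$, both fed into the positivity of $P$. Schwarz gives $\LL_P\sbe\ker P$, and any closed left ideal $L\sbe\ker P$ lies in $\LL_P$ because $a\in L$ forces $a^*a\in L\sbe\ker P$. The statements for $\RR_P$ are symmetric.

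Next I would identify $\NN_P$ with $S\defeq\{a:P((ab)^*(ab))=0\ \forall b\in A\}$. The set $S$ is checked to be a closed two-sided ideal as above (the right-ideal property is immediate upon regrouping $(acb)^*(acb)$; the left one comes from $(cab)^*(cab)\leq\|c\|^2(ab)^*(ab)$), and it is contained in $\ker P$ via an approximate unit $(e_\lambda)$ of $A$: $P((ae_\lambda)^*(ae_\lambda))=P(e_\lambda a^*a e_\lambda)\to P(a^*a)$, so $a\in\LL_P\sbe\ker P$. Maximality of $\NN_P$ yields $S\sbe\NN_P$, while the reverse inclusion holds because $a\in\NN_P$ implies $(ab)^*(ab)\in\NN_P\sbe\ker P$ for every $b$. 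Passing to the quotient, $B\cap\NN_P\sbe B\cap\ker P=0$ (as $P|_B=\id_B$), so $B$ embeds as a \cstar{}subalgebra of $A_{P,r}$; since $P$ vanishes on $\NN_P$ and is $B$-linear, it descends to a norm-one projection $P_r\colon A_{P,r}\to B$, which is automatically a conditional expectation, and its almost-faithfulness is precisely the content of $\NN_P=S$.

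The four equivalences are now essentially formal. Item (3) is the definition combined with $\NN_P=S$, and (2) follows because $\LL_P=0$ trivially forces $\NN_P\sbe\LL_P=0$ and is equivalent to faithfulness by definition of $\LL_P$. For (1), $P$ is symmetric if and only if $\LL_P=\RR_P$ by definition; in that case $\LL_P$ is closed under $*$, hence is a $*$-invariant left ideal, hence two-sided, so by maximality $\LL_P=\NN_P=\RR_P$. The converse is trivial. Item (4) then combines (1)--(3): almost faithfulness gives $\NN_P=0$, symmetry then forces $\LL_P=\RR_P=\NN_P=0$, which is faithfulness. The main subtlety to keep an eye on is the approximate-unit argument used to show $S\sbe\ker P$ in the non-unital setting; without it the characterization $\NN_P=S$ would break down, since $a\in S$ only controls products $ab$ and not $a$ itself.
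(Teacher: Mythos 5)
The paper does not prove this proposition at all: it is quoted from Kwa\'sniewski--Meyer (Section~3.1 of the cited reference) and used as a black box, so there is no in-paper argument to compare against. Your blind proof is a correct and essentially complete self-contained derivation. The key verifications all hold: $\LL_P$ is a closed left ideal by the inequalities $(a+b)^*(a+b)\leq 2(a^*a+b^*b)$ and $(ca)^*(ca)\leq\|c\|^2a^*a$ together with positivity of $P$; the Kadison--Schwarz inequality $P(a)^*P(a)\leq P(a^*a)$ (which for a conditional expectation follows directly from $0\leq P\bigl((a-P(a))^*(a-P(a))\bigr)=P(a^*a)-P(a)^*P(a)$ via the bimodule property) places $\LL_P$ inside $\ker P$; maximality follows since any left ideal $L\sbe\ker P$ satisfies $a^*a\in L$ for $a\in L$. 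Your identification $\NN_P=S$ is the crux, and you correctly flag the one genuine subtlety, namely that $S\sbe\ker P$ requires the approximate-unit computation $P(e_\lambda a^*ae_\lambda)\to P(a^*a)$ rather than a naive substitution. The four equivalences then follow formally as you say; the only micro-observation worth making explicit in item (1) is that the paper's definition of symmetry is the inclusion $\LL_P\sbe\RR_P$, which is equivalent to equality because $\RR_P=\LL_P^*$, and that $\NN_P\sbe\LL_P$ can alternatively be seen directly from the fact that $\NN_P$ is in particular a left ideal inside $\ker P$.
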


It follows that $A_{P,r}$ is the unique quotient of $A$ containing $B$ as a \cstar{}subalgebra such that $P$ factors through an almost faithful conditional expectation onto $B$, more precisely: 

\begin{corollary}\label{cor:prop-KM-red-quotient-exp}
If  $\Lambda\colon A\onto A_{P,r}$ denotes the quotient map and if $D$ is another \cstar{}algebra containing $B$ with a surjective *-homomorphism $\pi: A \twoheadrightarrow D$ and an almost faithful conditional expectation $Q: D \to B$ which factors $P$, then there is a unique isomorphism $\psi \colon D\congto A_{P,r}$ such that $\psi \circ\pi= \Lambda$:
	\[
	\begin{tikzcd}
		A \arrow[twoheadrightarrow]{r}{\pi} \arrow[swap]{d}{P} & D \arrow[swap]{dl}{Q}\arrow{r}{\psi} & A_{P,r} \arrow[swap]{dll}{P_r}\\
		B  & 
	\end{tikzcd}
	\] 
\end{corollary}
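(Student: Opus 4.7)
The plan is to show that $\ker(\pi)$ coincides with $\NN_P$, so that $\pi$ and $\Lambda$ have the same kernel and surjectivity will give a unique isomorphism $\psi$ between their codomains. Uniqueness of $\psi$ with $\psi\circ\pi=\Lambda$ is immediate from the surjectivity of $\pi$: if $\psi_1\circ\pi=\psi_2\circ\pi$, then $\psi_1=\psi_2$ on the range $\pi(A)=D$.

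First I would check $\ker(\pi)\subseteq\NN_P$. Take $a\in\ker(\pi)$. For any $b\in A$, the relation $Q\circ\pi=P$ gives
$$P((ab)^*(ab))=Q(\pi(ab)^*\pi(ab))=Q(\pi(a)\pi(b))^*Q(\pi(a)\pi(b))=0,$$
since $\pi(a)=0$. By the characterization of $\NN_P$ recalled in Proposition~\ref{prop:KM-reduced-quotient-exp}, this means $a\in\NN_P$.

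For the reverse inclusion, take $a\in\NN_P$. Then $P((ab)^*(ab))=0$ for all $b\in A$, hence $Q((\pi(a)\pi(b))^*(\pi(a)\pi(b)))=0$ for all $b\in A$. Because $\pi$ is surjective, $\pi(b)$ ranges over all of $D$, so $Q((\pi(a)d)^*(\pi(a)d))=0$ for every $d\in D$. This says $\pi(a)\in\NN_Q$, and since $Q$ is almost faithful we have $\NN_Q=0$ (again by Proposition~\ref{prop:KM-reduced-quotient-exp}). Thus $\pi(a)=0$ and $a\in\ker(\pi)$.

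With $\ker(\pi)=\NN_P=\ker(\Lambda)$, both $\pi$ and $\Lambda$ are surjective *-homomorphisms from $A$ with the same kernel, so the first isomorphism theorem produces a unique *-isomorphism $\psi\colon D\congto A_{P,r}$ with $\psi\circ\pi=\Lambda$. The commutativity with the conditional expectations is automatic: from $P_r\circ\Lambda=P=Q\circ\pi$ and $\Lambda=\psi\circ\pi$ we get $P_r\circ\psi\circ\pi=Q\circ\pi$, and surjectivity of $\pi$ yields $P_r\circ\psi=Q$. There is essentially no obstacle here; the entire argument is a direct unpacking of the characterization $\NN_P=\{a:P((ab)^*(ab))=0\ \forall b\}$ from Proposition~\ref{prop:KM-reduced-quotient-exp} combined with the first isomorphism theorem.
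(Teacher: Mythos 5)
Your argument is correct and fills in exactly the reasoning the paper leaves implicit when it derives this corollary from Proposition~\ref{prop:KM-reduced-quotient-exp}: you identify $\ker(\pi)$ with $\NN_P$ using the characterization $\NN_P=\{a\in A: P((ab)^*(ab))=0\ \text{for all } b\in A\}$ together with the almost faithfulness of $Q$ (i.e.\ $\NN_Q=0$), and then invoke the first isomorphism theorem; the compatibility $P_r\circ\psi=Q$ follows as you say. One cosmetic slip: in your first display the intermediate expression $Q(\pi(a)\pi(b))^*Q(\pi(a)\pi(b))$ is not equal to $Q\bigl((\pi(a)\pi(b))^*(\pi(a)\pi(b))\bigr)$ in general, since a conditional expectation is not multiplicative, but because $\pi(a)=0$ both quantities vanish and the conclusion is unaffected.
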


The reduced \cstar{}algebras $A_{P,r}$ usually realize some sort of ``reduced crossed product''. Needless to say, the usual reduced crossed products $B\rtimes_\red G$ for an action of a discrete group $G$ on a \cstar{}algebra $B$ are special examples of the above construction by taking, for instance, the full crossed product $B\rtimes G$ as the starting \cstar{}algebra $A$ endowed with the canonical conditional expectation $B\rtimes G\onto B$.
More generally, one can consider also Fell bundles and their cross-sectional \cstar{}algebras as special examples. 

For us, the most important class of examples are those associated to separated graphs where we have an ordinary conditional expectation taking values into a commutative subalgebra:

\begin{corollary}\label{cor:red-separated-cond-exp}
Let $(E,C)$ be a finitely separated graph and let $P\colon C^*(E,C)\to C_0(E^0)$ be the canonical conditional expectation. This is symmetric and its reduced quotient is
$$C^*(E,C)_{P,r}\cong C^*_r(E,C)$$
endowed with the canonical faithful conditional expectation given by~\eqref{eq:condexpeconbasiselements}.
\end{corollary}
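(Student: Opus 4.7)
The proof should be quite short, essentially a direct application of Proposition~\ref{prop:KM-reduced-quotient-exp} and Corollary~\ref{cor:prop-KM-red-quotient-exp} combined with what is already established about $C^*_r(E,C)$.

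My plan is to start from the definition $P = P_r \circ \Lambda$, where $\Lambda\colon C^*(E,C) \onto C^*_r(E,C)$ is the canonical quotient homomorphism coming from the reduced amalgamated free product description of $C^*_r(E,C)$, and $P_r\colon C^*_r(E,C) \to C_0(E^0)$ is the canonical conditional expectation produced by the reduced amalgamated free product construction. The crucial input is that $P_r$ is \emph{faithful}: this was recorded right after the definition of $C^*_r(E,C)$ as a consequence of \cite[Theorem 2.1]{Ivanov:structure_amalgamated_free_product}, using that each building-block expectation $C^*(E_X) \to C_0(E^0)$ from Proposition~\ref{esperançacondicional1} is faithful (valid since $E_X$ is row-finite by the finite-separation assumption).

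With faithfulness of $P_r$ in hand, Proposition~\ref{prop:KM-reduced-quotient-exp}(2) gives $\mathcal{L}_{P_r} = \mathcal{R}_{P_r} = \mathcal{N}_{P_r} = 0$. Pulling back through $\Lambda$, for $a \in C^*(E,C)$ we have $P(a^*a) = P_r(\Lambda(a)^*\Lambda(a)) = 0$ iff $\Lambda(a) = 0$, so $\mathcal{L}_P = \ker(\Lambda)$, and symmetrically $\mathcal{R}_P = \ker(\Lambda)$. For the nucleus, $\ker(\Lambda)$ is a closed two-sided ideal contained in $\ker(P)$ (in fact in $\mathcal{L}_P$), hence $\ker(\Lambda) \subseteq \mathcal{N}_P$; conversely $\mathcal{N}_P \subseteq \mathcal{L}_P = \ker(\Lambda)$. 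Thus $\mathcal{L}_P = \mathcal{R}_P = \mathcal{N}_P = \ker(\Lambda)$, which by Proposition~\ref{prop:KM-reduced-quotient-exp}(1) shows that $P$ is symmetric.

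Finally, the identification $C^*(E,C)_{P,r} \cong C^*_r(E,C)$ follows immediately from Corollary~\ref{cor:prop-KM-red-quotient-exp} applied with $D = C^*_r(E,C)$, $\pi = \Lambda$ and $Q = P_r$: since $P_r$ is faithful, it is in particular almost faithful, and it factors $P$ by construction. The unique induced isomorphism $\psi\colon C^*_r(E,C) \congto C^*(E,C)_{P,r}$ intertwines $P_r$ with $(P)_r$, so the resulting conditional expectation on $C^*(E,C)_{P,r}$ is exactly the faithful one given by formula~\eqref{eq:condexpeconbasiselements} on generators (which is the defining formula for $P_r$ via Lemma~\ref{lem:formula-for-P}). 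There is no real obstacle here; the only nontrivial ingredient, namely faithfulness of $P_r$, has been invoked earlier in the text, so the argument reduces to bookkeeping the formal consequences.
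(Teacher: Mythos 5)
Your proposal is correct and follows essentially the same route as the paper, whose proof is exactly the one-line invocation of Corollary~\ref{cor:prop-KM-red-quotient-exp} together with the faithfulness of the canonical conditional expectation on $C^*_r(E,C)$. You have merely spelled out the bookkeeping (in particular the verification that $\mathcal{L}_P=\mathcal{R}_P=\mathcal{N}_P=\ker(\Lambda)$, giving symmetry), which the paper leaves implicit.
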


\begin{proof}
	This follows from Corollary \ref{cor:prop-KM-red-quotient-exp} and the fact that the canonical conditional expectation on $C^*_r(E,C)$ is faithful. 
\end{proof}

The algebras $C^*(E,C)$ and $C^*_r(E,C)$ are, in general, quite different as is illustrated by the following basic example.

\begin{example}\label{ex:red-Cuntz}
%Consider a graph $E$ with only one vertex $v$ and the separation $C$ by singleton sets $X=\{e\}$ with $e\in E^1$. 
Let $(A_n,D)$ be the Cuntz separated graph from Example \ref{excuntzgrafoseparado}. 
As observed in Example~\ref{excanonicografoseparado}, we have $C^*(A_n,D)=C^*(\Free)$,  the full \cstar{}algebra of the free group $\mathbb{F}$ on the edges $A_n^1$ of $A_n$. The canonical conditional expectation coincides with the canonical trace $\tau$ on $C^*(\Free)$,
so that $C^*_r(A_n,D)=C^*(\F)_{\tau,r} \cong C_r(\F)$. 
\end{example}

Of course, $C^*(E,C)=C^*_r(E,C)$ if and only if the canonical conditional expectation $P$ is faithful on $C^*(E,C)$. This happens, for instance, for ordinary graph \cstar{}algebras, but also for certain special properly separated graphs as shown in the following example, which is a particular case of \cite[Proposition 9.1]{Ara-Exel:Dynamical_systems}.  

\begin{example} 
	Consider the separated graphs $(E(m,n),C(m,n))$ from Example~\ref{exemplografoseparado}.
%\begin{center}
	%\begin{tikzpicture}[scale=0.6]
%		\SetGraphUnit{5}
%		\GraphInit[vstyle=Classic]
%		\tikzset{VertexStyle/.append style={minimum size=1.5pt, inner sep=1.5pt}}
%		\Vertex[Math, Lpos=-90]{v}
%		\EA[Math, Lpos=-87, unit=10](v){w}
%		\tikzset{LabelStyle/.style =
%			{fill=white,minimum size=0.5pt, inner sep=0.5pt}}
%		\tikzset{EdgeStyle/.style = {thick,
%				color = red}}
%		\Edge[ style={->,bend left=20}](v)(w)
%%		\Edge[style={->,bend left=30}](v)(w)
%		\Edge[label=$e_i$, style={->,bend left=40}](v)(w)
%		\tikzset{EdgeStyle/.style = {thick,
%				color = blue}}
%%		\Edge[style={->,bend right=20}](v)(w)
%		\Edge[style={->,bend right=30}](v)(w)
%		\Edge[label=$f_i$, style={->,bend right=40}](v)(w)
%	\end{tikzpicture}
%\end{center}
For $m=1$ and $n\ge 2$, we have the separated graph $(E(1,n),C(1,n))$, shown in Figure \ref{fig:separtedgraphE1n}. 
\begin{center}{
			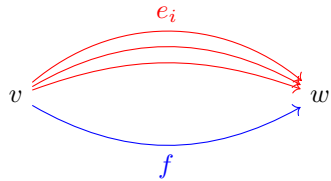
\begin{figure}[htb]
				\begin{tikzpicture}[scale=2]
					%\node (v) at (1,1)  {$v$};
					%\node (v) at (1,0) {$w$};
					\node (v) at (0,0) {$v$};
					\node (w) at (2,0) {$w$};
					\draw[->,red]  (v) to [bend left=40] node [above] {$e_i$} (w);
					\draw[->,red]  (v) to [bend left=30] (w);
					%\node[red] (.) at (1,0.3) {$\vdots$};
					\draw[->,red]  (v) to [bend left=20] (w);
					\draw[->,blue]  (v) to [bend right] node [below] {$f$} (w);
				\end{tikzpicture}
				\caption{The separated graph $(E(1,n),C(1,n))$}
				\label{fig:separtedgraphE1n}
		\end{figure}}
\end{center}
\vspace{0.2cm}
We know from Example~\ref{excanonicografoseparado1} that $C^*(E(1,n),C(1,n)) \cong M_2(\mathcal{O}_n)$.
Identifying $C_0(E^0)=\C \oplus \C$ as diagonal matrices in $M_2(\mathcal{O}_n)$, the canonical conditional expectation is given by
$$P: M_2(\mathcal{O}_n) \to \C\oplus\C,\quad P(a)=(\tau(a_{11}),\tau(a_{22}))$$ 
for $a=(a_{ij})\in M_2(\mathcal{O}_n)$, where $\tau$ is the canonical faithful state on $\mathcal{O}_n$ (consider $\mathcal{O}_n$ as a graph \cstar{}algebra and apply Proposition~\ref{esperançacondicional1}). Therefore $P$ is faithful and we have 
	$$C_r^*(E(1,n),C(1,n)) = C^*(E(1,n),C(1,n)).$$

For a separated graph $(E(m,n),C(m,n))$ with $1< m<n$, the behaviour is quite different.
It is shown in \cite[Corollary 4.5]{Ara:Purely_infinite} that the reduced \cstar{}algebra  $C^*_r(E(m,n),C(m,n))$ is a simple purely infinite \cstar{}algebra. 
On the other hand, $C^*(E(m,n),C(m,n))$ is not a simple \cstar{}algebra, so that $C^*(E,C)\not=C^*_r(E,C)$.
\end{example}

From now on, we assume that all conditional expectations are symmetric. 

\begin{lemma}\label{lemaesperançacondicionalAPR}
	Let $A$ and $A'$ be \cstar{}algebras with symmetric conditional expectations $P:A \rightarrow B$ and $Q: A' \rightarrow B'$ and suppose that $\pi: A \rightarrow A'$ is a *-homomorphism commuting with these conditional expectations, that is, the diagram below commutes: 
	\[
	\begin{tikzcd}
		A \arrow[rr]{}{\pi} \arrow[d]{}{P} & & A' \arrow[d]{}{Q} \\
		B \arrow[rr]{}{\pi|} & & B'
	\end{tikzcd}
	\] Then $\pi$ factors through a *-homomorphism $\pi_r:A_{P,r} \rightarrow A'_{Q,r}$. Moreover, if $\pi$ is surjective, then so is $\pi_r$, and if $\pi$ is injective on $B$, then $\pi_r$ is injective on $A_{P,r}$.
	In particular, $\pi_r$ is an isomorphism if so is $\pi$.
\end{lemma}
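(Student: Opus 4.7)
The plan is to use the explicit description of the nuclei $\NN_P$ and $\NN_Q$ made available by the symmetry hypothesis together with Proposition~\ref{prop:KM-reduced-quotient-exp}, and then pass to quotients in the standard way. The whole argument is driven by the identity $Q \circ \pi = \pi|_B \circ P$ coming from the commutative diagram in the statement.

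First I would show the inclusion $\pi(\NN_P) \subseteq \NN_Q$. Since $P$ is symmetric, Proposition~\ref{prop:KM-reduced-quotient-exp} gives $\NN_P = \LL_P = \{a \in A : P(a^*a) = 0\}$, and likewise $\NN_Q = \LL_Q$. So if $a \in \NN_P$, then $P(a^*a) = 0$, and applying the commutative square to the element $a^*a$ yields
\[
Q(\pi(a)^*\pi(a)) \;=\; Q(\pi(a^*a)) \;=\; \pi|_B(P(a^*a)) \;=\; 0,
\]
so $\pi(a) \in \LL_Q = \NN_Q$. Since $\NN_P$ and $\NN_Q$ are closed two-sided ideals, the composition $A \xrightarrow{\pi} A' \twoheadrightarrow A'_{Q,r}$ vanishes on $\NN_P$ and therefore descends to a well-defined \Star{}homomorphism $\pi_r \colon A_{P,r} \to A'_{Q,r}$.

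The surjectivity claim is immediate: if $\pi$ is surjective, then so is the composition $A \to A'_{Q,r}$, and hence $\pi_r$ is surjective. For the injectivity claim, suppose that $\pi|_B$ is injective and that $\pi_r([a]) = 0$ for some $a \in A$. Then $\pi(a) \in \NN_Q = \LL_Q$, so $Q(\pi(a)^*\pi(a)) = 0$. Using the commutative square once more we get $\pi|_B(P(a^*a)) = Q(\pi(a^*a)) = 0$, and injectivity of $\pi|_B$ forces $P(a^*a) = 0$. By symmetry of $P$ this means $a \in \LL_P = \NN_P$, i.e.\ $[a] = 0$ in $A_{P,r}$, as required. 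Combining both statements, if $\pi$ is an isomorphism then $\pi$ is in particular surjective and injective on $B$, so $\pi_r$ is bijective, hence an isomorphism.

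The only potential obstacle is the step that identifies $\NN_P$ with $\LL_P$; without symmetry, $\pi(\NN_P) \subseteq \NN_Q$ would not follow from $P(a^*a)=0$ alone, and one would have to check the defining condition $P((ab)^*ab)=0$ for all $b\in A$ by hand. Symmetry bypasses this entirely, and the rest is routine.
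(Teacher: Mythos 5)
Your proof is correct and follows essentially the same route as the paper: both arguments use the symmetry hypothesis to identify $\NN_P=\LL_P$ and $\NN_Q=\LL_Q$ via Proposition~\ref{prop:KM-reduced-quotient-exp}, deduce $\pi(\NN_P)\subseteq\NN_Q$ from $Q\circ\pi=\pi|_B\circ P$ applied to $a^*a$, and prove injectivity by pulling $Q(\pi(a)^*\pi(a))=0$ back to $P(a^*a)=0$ using injectivity of $\pi|_B$. No gaps.
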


\begin{proof}
	Let $\Lambda: A \to A_{P,r}$ and $\Lambda':A' \to A'_{Q,r}$ denote the quotient maps. We want to define $\pi_r: A_{P,r} \to A'_{Q,r}$ such that $\pi_r(\Lambda(a))=\Lambda'(\pi(a))$ for all $a \in A$. To see that such map exists, we must show that $\Lambda(a)=0$ implies $\Lambda'(\pi(a))=0$, that is, $\pi(\ker(\Lambda))\sbe \ker(\Lambda')$. But since $P$ and $Q$ are assumed to be symmetric, $\ker(\Lambda)=\LL_P$ and $\ker(\Lambda')=\LL_Q$, and the necessary inclusion is equivalent to $\pi(\LL_P)\sbe \LL_Q$, which follows from $Q\circ \pi=\pi\circ P$. Hence $\pi_r$ exists. It is clear from the definition that $\pi_r$ is surjective if so is $\pi$. Suppose that $\pi$ is injective on $B$ and let $a\in A$ with $\pi_r(\Lambda(a))=\Lambda'(\pi(a)) = 0$. We must show that $\Lambda(a) = 0$. But 
$$\pi(P(a^*a))=Q(\pi(a^*a))=Q_r(\Lambda'(\pi(a^*a)))=Q_r(\Lambda'(\pi(a))^*\Lambda'(\pi(a)))=0.$$
Since $\pi$ is injective on $B$, it follows that $P(a^*a)=0$, that is, $a\in \LL_P=\NN_P=\ker(\Lambda)$.
\end{proof}

Next we are going to record some results that will be needed later. They describe the reduced quotients by conditional expectations in some special situations.

\begin{lemma}\label{lemaPotimesQ}
	Let $A$ and $A'$ be \cstar{}algebras with symmetric conditional expectations $P:A \rightarrow B$ and $Q: A' \rightarrow B'$. 
	Consider on the (minimal) tensor product $A\otimes A'$ the tensor product conditional expectation $P\otimes Q\colon A\otimes A'\to B\otimes B'$.
	Then $$(A \otimes A')_{P \otimes Q,r} \cong A_{P,r} \otimes A'_{Q,r}.$$
\end{lemma}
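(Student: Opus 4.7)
The plan is to apply Corollary~\ref{cor:prop-KM-red-quotient-exp}, which characterizes the reduced quotient by a universal property. Let $\Lambda\colon A\onto A_{P,r}$ and $\Lambda'\colon A'\onto A'_{Q,r}$ be the quotient maps. Their minimal tensor product $\Lambda\otimes\Lambda'\colon A\otimes A'\onto A_{P,r}\otimes A'_{Q,r}$ is surjective, and one checks on elementary tensors that $(P_r\otimes Q_r)\circ(\Lambda\otimes\Lambda')=P\otimes Q$. Hence, to invoke Corollary~\ref{cor:prop-KM-red-quotient-exp}, it remains to verify that $P_r\otimes Q_r$ is an almost faithful conditional expectation on $A_{P,r}\otimes A'_{Q,r}$.

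First I would observe that $P_r$ and $Q_r$ are in fact \emph{faithful} (not merely almost faithful), owing to the symmetry of $P$ and $Q$. Indeed, Proposition~\ref{prop:KM-reduced-quotient-exp}(1) gives $\LL_P=\NN_P$; so if $P_r(\Lambda(a)^*\Lambda(a))=0$, then $P(a^*a)=0$, whence $a\in\LL_P=\NN_P$ and $\Lambda(a)=0$. The same reasoning applies to $Q_r$.

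The main step is to show that the minimal tensor product of two faithful conditional expectations is faithful, via a standard slice-map argument. Factor $P_r\otimes Q_r=(\id\otimes Q_r)\circ(P_r\otimes\id)$, and assume $(P_r\otimes\id)(x)=0$ for some positive $x\in A_{P,r}\otimes A'_{Q,r}$. Then for every state $\psi$ of $A'_{Q,r}$,
$$P_r\bigl((\id\otimes\psi)(x)\bigr)=(\id\otimes\psi)\bigl((P_r\otimes\id)(x)\bigr)=0,$$
so faithfulness of $P_r$ forces $(\id\otimes\psi)(x)=0$. Since the slice maps $\id\otimes\psi$ separate points on the minimal tensor product, $x=0$; thus $P_r\otimes\id$ is faithful on $A_{P,r}\otimes A'_{Q,r}$. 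A symmetric argument shows $\id\otimes Q_r$ is faithful on $B\otimes A'_{Q,r}$, so their composition $P_r\otimes Q_r$ is faithful. Corollary~\ref{cor:prop-KM-red-quotient-exp} then produces the desired canonical isomorphism $(A\otimes A')_{P\otimes Q,r}\cong A_{P,r}\otimes A'_{Q,r}$.

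The main point of delicacy is the slice-map step, which uses the minimality of the tensor norm in an essential way; for other \cstar{}tensor norms the analogous statement can fail, but this is exactly the setting in which the lemma is stated, so no genuine obstacle arises.
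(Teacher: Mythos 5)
Your proof is correct and follows essentially the same route as the paper: both verify that $P_r\otimes Q_r$ is a faithful conditional expectation on $A_{P,r}\otimes A'_{Q,r}$ factoring $P\otimes Q$ and then invoke the universal property of the reduced quotient (Corollary~\ref{cor:prop-KM-red-quotient-exp}). The only difference is that the paper simply asserts the faithfulness of $P_r\otimes Q_r$, whereas you supply the standard slice-map argument for it — a welcome extra detail, correctly executed.
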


\begin{proof}
First of all, it is well known that $P \otimes Q : A \otimes A' \to B \otimes B'$ is a well-defined conditional expectation given on generators by $(P \otimes Q)(a \otimes a') = P(a)\otimes Q(a')$, see \cite{Brown-Ozawa:Approximations}*{Theorem~3.5.3} or \cite{Blackadar:book}*{II.9.7.1}. By Proposition~\ref{prop:KM-reduced-quotient-exp} it is enough to check that $A_{P,r} \otimes B_{Q,r}$ has a faithful conditional expectation which factors $P \otimes Q$. But the tensor product $P_r \otimes Q_r: A_{P,r} \otimes A'_{Q,r} \to B \otimes B'$ is a conditional expectation satisfying these properties.	
\end{proof}

\begin{proposition}\label{induçãoesperançacondicional}
	Let $(A,G,\delta)$ be a coaction of $G$, let $B\sbe A$ be a $G$-invariant \cstar{}subalgebra in the sense that $\delta$ restricts to a coaction $\delta\colon B\to B\otimes C^*(G)$ and let $P: A \rightarrow B$ a $G$-equivariant symmetric conditional expectation meaning that $(P \otimes\id_G) \circ \delta = \delta \circ P$. Then there is a conditional expectation $P \rtimes G: A \rtimes_\delta G \rightarrow B \rtimes_\delta G$ such that 
	$$P \rtimes G(j_A(a)j_G^A(f)) = (j_B \circ P)(a)j_G^B(f)$$ for every $a \in A$ and $f \in C_0(G)$. Moreover, if $P$ is faithful then so is $P \rtimes G$.
\end{proposition}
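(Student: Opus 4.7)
The plan is to construct $P\rtimes G$ as the restriction of the slice conditional expectation $P\otimes\id_{\K}\colon A\otimes\K(\ell^2 G)\to B\otimes\K(\ell^2 G)$ to the crossed product $A\rtimes_\delta G$, realized as a $C^*$-subalgebra of $A\otimes\K(\ell^2 G)$ through the regular covariant pair $(j_A,j_G^A)=((\id\otimes\lambda)\circ\delta,\,1\otimes M)$. Note that $B\rtimes_\delta G$ similarly sits inside $B\otimes\K(\ell^2 G)\subseteq A\otimes\K(\ell^2 G)$ (using injectivity of the minimal tensor product), so restricting into this target makes sense.

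The first step is to verify that $A\rtimes_\delta G$ actually lies inside $A\otimes\K(\ell^2 G)$: for a spectral element $a\in A_g$ and $t\in G$, the identity $\delta(a)=a\otimes g$ gives
\[
j_A(a)j_G^A(\chi_t)=a\otimes \lambda_g M_{\chi_t},
\]
where $\lambda_g M_{\chi_t}$ is a rank-one operator in $\K(\ell^2 G)$. Since finite linear combinations of spectral elements are norm dense in $A$, such products span a dense subspace of $A\rtimes_\delta G$. Applying the slice map and using the $G$-equivariance $(P\otimes\id_G)\circ\delta=\delta\circ P$---which in particular forces $P(A_g)\subseteq B_g$---one gets
\[
(P\otimes\id_\K)\bigl(a\otimes\lambda_g M_{\chi_t}\bigr)=P(a)\otimes\lambda_g M_{\chi_t}=j_B(P(a))\,j_G^B(\chi_t).
\]
Linearity, continuity, and density promote this computation to a well-defined map $P\rtimes G\colon A\rtimes_\delta G\to B\rtimes_\delta G$ satisfying the required formula; the conditional-expectation properties (idempotence on $B\rtimes_\delta G$, $(B\rtimes_\delta G)$-bimodularity, and complete positivity) are then inherited from the slice map $P\otimes\id_\K$.

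The main obstacle is the faithfulness claim. My strategy is to establish the stronger fact that the slice map $P\otimes\id_\K$ is itself faithful whenever $P$ is, whence faithfulness of $P\rtimes G$ follows by restriction. For this, I would use the GNS Hilbert $B$-module $E_P$ associated with $P$ (the completion of $A$ with respect to the $B$-valued inner product $\langle a,b\rangle:=P(a^*b)$): faithfulness of $P$ upgrades the canonical $*$-homomorphism $A\to\mathcal L_B(E_P)$ to an embedding, and taking the exterior tensor product with $\K$ yields a faithful embedding $A\otimes\K\hookrightarrow \mathcal L_{B\otimes\K}(E_P\otimes\K)$ whose $(B\otimes\K)$-valued inner product computes precisely $P\otimes\id_\K$. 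A standard Cauchy--Schwarz estimate $(P\otimes\id_\K)((yx)^*(yx))\le \|y\|^2(P\otimes\id_\K)(x^*x)$ then shows that $\{x\in A\otimes\K:(P\otimes\id_\K)(x^*x)=0\}$ is a closed left ideal of $A\otimes\K$ which annihilates the dense submodule $A\otimes\K\subseteq E_P\otimes\K$, forcing it to be zero by the faithful module representation.
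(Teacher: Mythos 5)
Your construction of $P\rtimes G$ is correct and is essentially the same as the paper's: both realize $P\rtimes G$ as the restriction of the slice expectation $P\otimes\id_{\K}$ to the crossed product. The only real difference is that you exploit discreteness of $G$ to place $A\rtimes_\delta G$ inside $A\otimes\K(\ell^2 G)$ itself (via $j_A(a)j_G^A(\chi_t)=a\otimes\lambda_g M_{\chi_t}$ for $a\in A_g$, with $P(A_g)\sbe B_g$ coming from equivariance), whereas the paper works in the $\K$-multiplier algebra $\M_\K(A\otimes\K(\ell^2G))$, quoting \cite{Echterhoff-Kaliszewski-Quigg-Raeburn:Categorical}. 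Your variant is more elementary and perfectly adequate for discrete groups; this part of the argument is fine.

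The gap is in the faithfulness claim, which both you and the paper reduce to the assertion that $P$ faithful implies $P\otimes\id_\K$ faithful; the paper simply asserts this, while your attempted proof of it does not close. The Cauchy--Schwarz estimate you invoke, $(P\otimes\id_\K)\bigl((yx)^*(yx)\bigr)\le\|y\|^2(P\otimes\id_\K)(x^*x)$, shows that $N=\{x\in A\otimes\K:(P\otimes\id_\K)(x^*x)=0\}$ is a closed \emph{left} ideal, i.e.\ $yx\in N$ whenever $x\in N$. But for an element $x\in N$ to annihilate the dense submodule under the left-multiplication representation you must control $\|(P\otimes\id_\K)\bigl((xy)^*(xy)\bigr)\|$, i.e.\ you need $xy\in N$ --- that $N$ is a \emph{right} ideal --- and this is exactly what fails for a general non-symmetric expectation and cannot be assumed for $P\otimes\id_\K$ without circularity. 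Knowing only that $x$ has zero image in $E_P\otimes\K$ does not give $(\pi\otimes\id)(x)=0$, and injectivity of the canonical map $A\otimes\K\to E_P\otimes\K$ on the completed tensor product is precisely the statement being proved. The standard fix is a slice-map argument: for any state $\omega$ on $\K(\ell^2G)$ one has $(\id_B\otimes\omega)\circ(P\otimes\id_\K)=P\circ(\id_A\otimes\omega)$, the slice $(\id_A\otimes\omega)(z)$ of a positive $z$ is positive, so $(P\otimes\id_\K)(z)=0$ forces $(\id_A\otimes\omega)(z)=0$ for every $\omega$ by faithfulness of $P$, and slice maps separate points of the minimal tensor product, whence $z=0$. (For $G$ discrete one can equivalently compress by the projections $1\otimes M_{\chi_t}$ and read off matrix entries.) With that replacement your proof is complete.
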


\begin{proof}
Recall that the crossed product $A\rtimes_\delta G$ is a \cstar{}subalgebra of $\M(A\otimes\K(\ell^2G))$ by definition.
It will be important for the proof to recall from \cite{Echterhoff-Kaliszewski-Quigg-Raeburn:Categorical}*{Remark A.40} that it actually always lies in the $\K$-multiplier \cstar{}subalgebra:
$$\M_\K(A\otimes\K(\ell^2G))\defeq \{x\in\M(A\otimes\K): x(1\otimes k), (1\otimes k)x\in A\otimes \K(\ell^2G), \forall k\in \K(\ell^2G)\}.$$
Since $\M_{\K}(B \otimes \K(\ell^2 G)) \subseteq \M_{\K}(A \otimes \K(\ell^2 G))$ (see \cite{Echterhoff-Kaliszewski-Quigg-Raeburn:Categorical}*{Proposition A.6}), it follows that we can view $B\rtimes_\delta G$ as a \cstar{}subalgebra of $A\rtimes_\delta G$ via the regular representations. Moreover, the conditional expectation $P\colon A\to B$ extends to a conditional expectation $P\otimes\Id_\K\colon \M_\K(A\otimes\K(\ell^2G))\to \M_\K(B\otimes\K(\ell^2G))$ and since $P$ commutes with the coactions on $A$ and $B$, it follows that $P\otimes\Id_\K$ restricts to a conditional expectation $P\rtimes G\colon A\rtimes_\delta G\to B\rtimes_\delta G$ as in the statement. If $P$ is faithful then so is $P \otimes\id_\K$ and therefore also $P\rtimes G$. 
\end{proof}

\begin{lemma}\label{lemaAprotmesG}
	Let $\delta\colon A\to A\otimes C^*(G)$ be a coaction and let $P: A \to B$ be a $G$-equivariant symmetric conditional expectation onto a $G$-invariant \cstar{}subalgebra $B\sbe A$. Then  
	$\delta$ factors through a coaction $\delta_r\colon A_{P,r}\to A_{P,r}\otimes C^*(G)$ such that 
\begin{equation}\label{eq:coaction-Pr}
(A \rtimes_{\delta} G)_{P \rtimes G,r} \cong A_{P,r} \rtimes_{\delta_r} G.
\end{equation}
Moreover, if $(P\otimes\tau)\circ\delta=P$, where $\tau\colon C^*(G)\to \C$ denotes the canonical trace, then the coaction $\delta_r$ is normal.
\end{lemma}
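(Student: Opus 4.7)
The plan has three stages: first, construct the quotient coaction $\delta_r$ on $A_{P,r}$; second, identify $(A\rtimes_\delta G)_{P\rtimes G,r}$ with $A_{P,r}\rtimes_{\delta_r}G$ via Corollary~\ref{cor:prop-KM-red-quotient-exp}; third, upgrade to normality under the trace identity. The hard part will be the first stage, as it requires showing that $\NN_P$ is carried into the kernel of $(\Lambda\otimes\id_G)\circ\delta$, so that $\delta$ descends to the quotient.

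The key observation for stage one is that symmetry of $P$ forces $P_r\colon A_{P,r}\to B$ to be \emph{faithful}, not merely almost faithful: if $P_r(\Lambda(a)^*\Lambda(a))=P(a^*a)=0$, then $a\in\LL_P=\NN_P$ by Proposition~\ref{prop:KM-reduced-quotient-exp}, so $\Lambda(a)=0$. A slice-map argument then promotes this to faithfulness of $P_r\otimes\id_{C^*(G)}$ on the minimal tensor product (if $y\ge 0$ satisfies $(P_r\otimes\id)(y)=0$, then for every state $\psi$ of $C^*(G)$ one has $P_r((\id\otimes\psi)(y))=0$ with $(\id\otimes\psi)(y)\ge 0$, hence $(\id\otimes\psi)(y)=0$ for every $\psi$, so $y=0$). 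Setting $\rho:=(\Lambda\otimes\id_G)\circ\delta$ and using $G$-equivariance of $P$, for $a\in\NN_P$ one computes
$$(P_r\otimes\id_G)\bigl(\rho(a)^*\rho(a)\bigr)=(P\otimes\id_G)(\delta(a^*a))=\delta(P(a^*a))=0,$$
whence $\rho(a)=0$. Thus $\rho$ descends to a $*$-homomorphism $\delta_r\colon A_{P,r}\to A_{P,r}\otimes C^*(G)$ with $\delta_r\circ\Lambda=\rho$; the coaction identity and nondegeneracy transport directly from $\delta$ using surjectivity of $\Lambda\otimes\id_G$.

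For stage two, Proposition~\ref{induçãoesperançacondicional} yields conditional expectations $P\rtimes G$ on $A\rtimes_\delta G$ and $P_r\rtimes G$ on $A_{P,r}\rtimes_{\delta_r}G$, with $P_r\rtimes G$ faithful because $P_r$ is. Functoriality of crossed products by coactions provides a surjective, $G$-equivariant $*$-homomorphism $\Lambda\rtimes G$ intertwining these expectations, and since $P_r\rtimes G$ is in particular almost faithful and factors $P\rtimes G$, Corollary~\ref{cor:prop-KM-red-quotient-exp} identifies $(A\rtimes_\delta G)_{P\rtimes G,r}$ with $A_{P,r}\rtimes_{\delta_r}G$, as required in~\eqref{eq:coaction-Pr}.

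For stage three, normality of $\delta_r$ is equivalent to injectivity of $(\id\otimes\lambda)\circ\delta_r$, using the characterisation recalled after Proposition~\ref{propmaximalcoaçãofibras}. Given $x=\Lambda(a)$ with $(\id\otimes\lambda)(\delta_r(x))=0$, the intertwining $\delta_r\circ\Lambda=(\Lambda\otimes\id_G)\circ\delta$ combined with the factorisation $\tau=\tau_r\circ\lambda$ (with $\tau_r$ the trace on $C^*_r(G)$) gives $(P\otimes\tau)(\delta(a^*a))=(P_r\otimes\tau_r)\bigl((\id\otimes\lambda)\delta_r(x^*x)\bigr)=0$. The standing hypothesis $(P\otimes\tau)\circ\delta=P$ then yields $P_r(x^*x)=P(a^*a)=0$, and faithfulness of $P_r$ forces $x=0$. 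The chief obstacle, as anticipated, is concentrated in stage one, specifically in the slice-map step promoting faithfulness of $P_r$ to faithfulness of $P_r\otimes\id_{C^*(G)}$.
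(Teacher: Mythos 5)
Your proposal is correct and takes essentially the same route as the paper: your stage-one descent argument (faithfulness of $P_r$ from symmetry, promoted to faithfulness of $P_r\otimes\id_G$ so that $\NN_P\sbe\ker\bigl((\Lambda\otimes\id_G)\circ\delta\bigr)$) is exactly what the paper packages into Lemmas~\ref{lemaesperançacondicionalAPR} and~\ref{lemaPotimesQ}, which you in effect reprove inline. Stages two and three coincide with the paper's use of Proposition~\ref{induçãoesperançacondicional} together with Corollary~\ref{cor:prop-KM-red-quotient-exp}, and with its verification that $(P_r\otimes\tau_r)\circ(\id\otimes\lambda)\circ\delta_r=P_r$ forces injectivity of $(\id\otimes\lambda)\circ\delta_r$.
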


\begin{proof} Since $P$ is $G$-equivariant, combining Lemmas~\ref{lemaesperançacondicionalAPR} and~\ref{lemaPotimesQ}, we get a \Star{}homo\-mor\-phism 
$$\delta_r\colon A_{P,r}\to (A\otimes C^*(G))_{P\otimes\Id}=A_{P,r}\otimes C^*(G)$$ 
satisfying $\delta_r (\Lambda (a))= (\Lambda \otimes\id_G)(\delta (a))$ for all $a\in A$. The coaction identity $(\delta_r\otimes\Id)\circ\delta_r=(\Id\otimes\delta_G)\circ\delta_r$ easily follows from this relation.
And similarly the nondegeneracy of $\delta_r$ also follows:
\begin{multline*}
\cspn \delta_r(A_{P,r})(1\otimes C^*(G))=\cspn (\Lambda\otimes\Id)(\delta(A)(1\otimes C^*(G)))\\
=\cspn (\Lambda\otimes \Id)(A\otimes C^*(G))=A_{P,r}\otimes C^*(G).
\end{multline*}	
	
	Let $P_r: A_{P,r} \to B$ be the faithful conditional expectation that factors $P$. By Proposition \ref{induçãoesperançacondicional} we have conditional expectations $P \rtimes G: A \rtimes_{\delta} G \to B \rtimes_{\delta} G$ and $P_r \rtimes G: A_{P,r} \rtimes_{\delta_r} G \to B \rtimes_{\delta} G$. Since $P_r$ is faithful then so is $P_r \rtimes G$ and the diagram 
	\[	
	\begin{tikzcd}
		A \rtimes_{\delta} G \arrow{r}{\Lambda \rtimes G} \arrow{d}{P \rtimes G} & A_{P,r} \rtimes_{\delta_r} G \arrow{ld}{P_r \rtimes G} \\
		B \rtimes_{\delta} G   & 
	\end{tikzcd}
	\] 
commutes because 
$(P_r\rtimes G)\circ (\Lambda\rtimes G)=(P_r\circ\Lambda)\rtimes G=P\rtimes G$. The isomorphism~\eqref{eq:coaction-Pr} then follows from Proposition~\ref{prop:KM-reduced-quotient-exp}.

To prove the normality of $\delta_r$ under the assumption that $(P\otimes\tau)\circ\delta=P$ we apply a similar idea, but now with the conditional expectation $P\otimes\tau\colon A\otimes C^*(G)\to B$.
Notice that $C^*(G)_{\tau,r}=C^*_r(G)$ with the regular representation $\lambda\colon C^*(G)\to C^*_r(G)$ as the quotient map, and the induced trace $\tau_r$ on $C^*_r(G)$ is faithful. It follows from Lemmas~\ref{lemaesperançacondicionalAPR} and~\ref{lemaPotimesQ} that there is a \Star{}homomorphism
$$\delta_r^\lambda\colon A_{P,r}\to (A\otimes C^*(G))_{P\otimes\tau,r}=A_{P,r}\otimes C^*_r(G)$$
such that $\delta_r^\lambda\circ\Lambda=(\Lambda\otimes\lambda)\circ\delta=(\Id\otimes\lambda)\circ(\Lambda\otimes\Id)\circ\delta=(\Id\otimes\lambda)\circ\delta_r\circ\Lambda$, that is, $\delta_r^\lambda=(\Id\otimes\lambda)\circ\delta_r$. By construction and the hypothesis that $(P\otimes \tau)\circ \delta = P$, we also have $(P_r\otimes\tau_r)\circ\delta_r^\lambda=P_r$. Since $P_r$ is faithful, it follows that $\delta_r^\lambda$ is also faithful.
But this means that $\delta_r$ is normal, as desired.
\end{proof}

\subsection{Duality for reduced separated graph $C^*$-algebras}

In this section we are going to use the description of the reduced \cstar{}algebras $C^*_r(E,C)$ of finitely separated graphs $(E,C)$ in terms of the reduced quotients described in the previous section, specially in Corollary~\ref{cor:red-separated-cond-exp}, in order to derive results involving coactions and their crossed products associated to labels on $E$, similarly to what has been done for the full \cstar{}algebras $C^*(E,C)$ in Section~\ref{sec:duality-full-separated}.

Let $(E,C)$ be a finitely separated graph and let $c: E^1 \to G$ be a labeling function. We already have from Theorem~\ref{isomordeltamaximal} a coaction $\delta_c: C^*(E,C) \rightarrow C^*(E,C) \otimes C^*(G)$ and we next prove that this coaction factors through the reduced \cstar{}algebra:

\begin{proposition}\label{prop:coaction-reduced-separated}
	Let $(E,C)$ be a finitely separated graph and $c: E^1 \to G$ be a labeling function. Then the coaction $\delta_c:\CEC \to \CEC \otimes \CG$ factors through the reduced \cstar{}algebra $C_r^*(E,C)$, that is, there is a coaction $\delta_c^r: \CECR \to \CECR \otimes \CG$ such that $\delta_c^r\circ \Lambda = (\Lambda \otimes\id_G)\circ \delta_c $. Moreover, $\delta_c^r$ is a normal coaction.  
\end{proposition}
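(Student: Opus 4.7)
The plan is to apply Lemma~\ref{lemaAprotmesG} to the coaction $\delta_c$ of Theorem~\ref{isomordeltamaximal} together with the canonical conditional expectation $P\colon C^*(E,C)\to C_0(E^0)$ and the subalgebra $B=C_0(E^0)$. Four conditions must be checked: (a) that $B$ is $G$-invariant; (b) that $P$ is symmetric; (c) that $P$ is $G$-equivariant, i.e.\ $(P\otimes\id_G)\circ\delta_c=\delta_c\circ P$; and (d) the normality hypothesis $(P\otimes\tau)\circ\delta_c=P$, where $\tau$ is the canonical trace on $C^*(G)$. Condition (a) is immediate from $\delta_c(P_v)=P_v\otimes 1$, which shows that $\delta_c$ restricts to the trivial coaction $b\mapsto b\otimes 1$ on $C_0(E^0)$. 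For (b), the factorisation $P=P_r\circ\Lambda$ together with faithfulness of $P_r$ on $C^*_r(E,C)$ yields $\LL_P=\ker(\Lambda)=\RR_P$, so $P$ is symmetric by Proposition~\ref{prop:KM-reduced-quotient-exp}.

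The heart of the matter is (c) and (d). Both are continuous maps that agree with $P$, respectively $\delta_c\circ P$, provided they agree on the dense spanning family of elements $S_\mu:=S_{\mu_1}S_{\nu_1}^*\cdots S_{\mu_n}S_{\nu_n}^*$ coming from Proposition~\ref{basegrafosseparados}. For such an element one computes
$$
\delta_c(S_\mu)=S_\mu\otimes c(\mu),\qquad c(\mu):=c(\mu_1)c(\nu_1)^{-1}\cdots c(\mu_n)c(\nu_n)^{-1},
$$
so that $(P\otimes\id_G)(\delta_c(S_\mu))=P(S_\mu)\otimes c(\mu)$ and $(P\otimes\tau)(\delta_c(S_\mu))=P(S_\mu)\,\delta_{c(\mu),1}$, while $\delta_c(P(S_\mu))=P(S_\mu)\otimes 1$ since $P(S_\mu)\in C_0(E^0)$. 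All three identities reduce to the single claim: \emph{if $c(\mu)\neq 1$ in $G$, then $P(S_\mu)=0$}.

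The key step will be to deduce this vanishing from Lemma~\ref{lem:formula-for-P}. By Definition~\ref{def:free-label} the labeling $c$ factors through the free label $\f\colon E^1\to\mathbb{F}$; thus $c(\mu)\neq 1$ in $G$ forces $\f(\mu)\neq 1$ in $\mathbb{F}$. Applying finitely many free-group reductions of the form $e^*e\mapsto r(e)$ and (for singletons $\{e\}\in C$) $ee^*\mapsto s(e)$ to $\mu$, as in the discussion preceding Lemma~\ref{lem:formula-for-P}, produces either $0$ or a $C$-separated weakly reduced path $\tau$ with $S_\mu=S_\tau$ and $\f(\tau)=\f(\mu)\neq 1$. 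Lemma~\ref{lem:formula-for-P} then gives $P(S_\tau)=0$, hence $P(S_\mu)=0$, as required.

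With (a)--(d) in hand, Lemma~\ref{lemaAprotmesG} delivers the desired coaction $\delta_c^r\colon C^*_r(E,C)\to C^*_r(E,C)\otimes C^*(G)$ satisfying $\delta_c^r\circ\Lambda=(\Lambda\otimes\id_G)\circ\delta_c$, and it asserts that $\delta_c^r$ is normal. The principal subtlety lies in (c) and (d), whose common core is the vanishing of $P$ on the non-trivial spectral subspaces; the decisive input is the combination of the free-label universality with the explicit formula for $P$ on weakly reduced paths supplied by Lemma~\ref{lem:formula-for-P}.
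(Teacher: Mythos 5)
Your proposal is correct and follows essentially the same route as the paper: apply Lemma~\ref{lemaAprotmesG} to the canonical conditional expectation $P\colon C^*(E,C)\to C_0(E^0)$, with the decisive input being that $c(\mu)\neq 1$ forces $\f(\mu)\neq 1$ and hence $P(S_\mu)=0$ by Lemma~\ref{lem:formula-for-P}. Your explicit verification of the symmetry and $G$-equivariance hypotheses of Lemma~\ref{lemaAprotmesG} (which the paper leaves implicit, relying on Corollary~\ref{cor:red-separated-cond-exp} and the standing symmetry assumption) is a welcome but inessential addition.
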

\begin{proof}
We apply Lemma~\ref{lemaAprotmesG} with $A=C^*(E,C)$, $B=C_0(E^0)$ and the canonical conditional expectation $P\colon A\to B$. 
Recall that the coaction $\delta_c$ is trivial on $B$, so that $B$ is $G$-invariant. Therefore Lemma~\ref{lemaAprotmesG} already gives us the desired coaction $\delta_c^r$. And by the same lemma, to show that $\delta_c^r$ is normal, it is enough to prove that
$(P\otimes\tau)\circ\delta_c=P$, where $\tau$ is the canonical trace of $\CG$. 
Consider a basic element of the form $S_{\mu_1} S_{\nu_1}^* \cdots S_{\mu_n} S_{\nu_n}^* \in L(E,C)$ for a $C$-separated reduced path $\mu\defeq \mu_1\nu_1^* \cdots \mu_n \nu_n^*$ in $(E,C)$. 
Using \eqref{eq:condexpeconbasiselements}, we get
	\begin{align*}
		(P \otimes \tau) \circ \delta_c(S_{\mu_1} S_{\nu_1}^* & \cdots S_{\mu_n} S_{\nu_n}^*) \\
		&=(P \otimes \tau)(S_{\mu_1} S_{\nu_1}^* \cdots S_{\mu_n} S_{\nu_n}^* \otimes c(\mu_1)c(\nu_1)^{-1} \ldots c(\mu_n)c(\nu_n)^{-1})\\
		&=P(S_{\mu_1} S_{\nu_1}^* \cdots S_{\mu_n} S_{\nu_n}^*) \tau(c(\mu_1)c(\nu_1)^{-1} \cdots c(\mu_n)c(\nu_n)^{-1}) \\
		&=N_\mu P_{s(\mu)}=P(S_{\mu_1} S_{\nu_1}^* \cdots S_{\mu_n} S_{\nu_n}^*).
	\end{align*} 
	We need to justify the equality $P(S_{\mu})\tau(c(\mu))= N_\mu P_{s(\mu)}$ above. Recall that we have defined the free label $\f \colon E^1\to \mathbb F$, where $\mathbb F$ is the free group on $E^1$, and that there is a unique group homomorphism $c\colon \mathbb F\to G$ so that $c(\f(e))= c(e)$ for all $e\in E^1$. Suppose first that $c(\mu)=1$. Then $\tau(c(\mu))= 1$ and the equality is clear. If $c(\mu)\ne 1$, then $\f (\mu)\ne 1$, and therefore by Lemma \ref{lem:formula-for-P}, we have  $P(S_\mu)=0$ and so $N_\mu =0$, and the equality is also clear. 
Therefore $(P\otimes\tau)\circ\delta_c=P$ as the elements  $S_{\mu_1} S_{\nu_1}^* \cdots S_{\mu_n} S_{\nu_n}^*$ generate $C^*(E,C)$ as a Banach space and all maps involved are continuous and linear.
\end{proof}

As an immediate consequence we get:  

\begin{corollary}\label{isomorfismo1}
	Let $(E,C)$ be a finitely separated graph and let $c: E^1 \to G$ be a labeling function. Then $$C_r^*(E,C) \rtimes_{\delta_c^r} G \rtimes_{\widehat{\delta_c^r},r} G \cong \CECR \otimes \K(\ell^2 G).$$
\end{corollary}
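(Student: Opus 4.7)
The plan is to derive Corollary~\ref{isomorfismo1} directly from the normality of the coaction $\delta_c^r$ established in Proposition~\ref{prop:coaction-reduced-separated}, following the same philosophy used to obtain Corollary~\ref{corolariografosskewseparados} from Theorem~\ref{isomordeltamaximal}: once one has verified the appropriate regularity of the coaction (maximal in the full case, normal in the reduced case), the iterated crossed-product isomorphism is a purely formal consequence of the definition.

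More precisely, by Proposition~\ref{prop:coaction-reduced-separated}, the coaction $\delta_c^r$ of $G$ on $\CECR$ is normal. By the very definition of normality reviewed in Section~\ref{sec:Preliminaries} (just before Proposition~\ref{propmaximalcoaçãofibras}), this means that the canonical surjection $\Pi \times U$ factors through an isomorphism
$$\CECR \rtimes_{\delta_c^r} G \rtimes_{\widehat{\delta_c^r},r} G \congto \CECR \otimes \K(\ell^2 G),$$
which is exactly the content of the corollary. In this sense there is no genuine obstacle: all the substantive work was already absorbed in establishing normality of $\delta_c^r$, which in turn reduced (via Lemma~\ref{lemaAprotmesG}) to the identity $(P\otimes\tau)\circ\delta_c=P$ verified on the basis from Proposition~\ref{basegrafosseparados} using Lemma~\ref{lem:formula-for-P}. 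So the proof of Corollary~\ref{isomorfismo1} is essentially a one-line citation of Proposition~\ref{prop:coaction-reduced-separated} together with the definition of a normal coaction.
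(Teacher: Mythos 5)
Your proposal is correct and coincides with the paper's own argument: the proof given there is precisely the one-line observation that the isomorphism is the definition of normality of $\delta_c^r$, which was established in Proposition~\ref{prop:coaction-reduced-separated}. Your additional remarks tracing the normality back to Lemma~\ref{lemaAprotmesG} and the identity $(P\otimes\tau)\circ\delta_c=P$ accurately locate where the real work was done.
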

\begin{proof}
	This follows from the fact that $\delta_c^r$ is a normal coaction. 
\end{proof}

\begin{lemma}
	For a free action of a group $G$ on a finitely separated graph $(E,C)$, the induced action $\alpha$ of $G$ on $\CEC$ factors through an action $\tilde{\alpha}$ of $G$ on the reduced \cstar{}algebra $\CECR$ such that $\P_r(\tilde{\alpha_{g}}(x)) = \alpha_{g}(\P_r(x))$ for all $x \in \CECR$.
\end{lemma}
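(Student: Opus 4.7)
The plan is to apply Lemma~\ref{lemaesperançacondicionalAPR} with $A=A'=C^*(E,C)$, $B=B'=C_0(E^0)$, $P=Q$ the canonical symmetric conditional expectation, and $\pi=\alpha_g$; note that $\alpha_g$ maps $C_0(E^0)$ to itself since the action permutes vertices. The main point to verify is the intertwining relation $P\circ\alpha_g=\alpha_g\circ P$ for every $g\in G$. Once this is done, the lemma yields, for each $g$, a $*$-homomorphism $\tilde\alpha_g\colon C^*_r(E,C)\to C^*_r(E,C)$ satisfying $\tilde\alpha_g\circ\Lambda=\Lambda\circ\alpha_g$, where $\Lambda$ is the canonical quotient. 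Applying the same reasoning to $\alpha_{g^{-1}}$ shows $\tilde\alpha_g$ is a $*$-automorphism with inverse $\tilde\alpha_{g^{-1}}$, and the surjectivity of $\Lambda$ combined with the multiplicativity of $g\mapsto\alpha_g$ forces $\tilde\alpha$ to be a group action of $G$ on $C^*_r(E,C)$ (strong continuity is automatic since $G$ is discrete).

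The main step is therefore to establish $P\circ\alpha_g=\alpha_g\circ P$ on $C^*(E,C)$. By continuity and linearity it suffices to check it on the linear basis of $L(E,C)$ from Proposition~\ref{basegrafosseparados}, and after using the reduction from $C$-separated reduced paths to $C$-separated weakly reduced paths described before Lemma~\ref{lem:formula-for-P}, it is enough to verify it on an element $S_\mu:=S_{\mu_1}S_{\nu_1}^*\cdots S_{\mu_n}S_{\nu_n}^*$ for a $C$-separated weakly reduced path $\mu=\mu_1\nu_1^*\cdots\mu_n\nu_n^*$. Since the action permutes $C$, the translated path $g\cdot\mu:=(g\cdot\mu_1)(g\cdot\nu_1)^*\cdots(g\cdot\mu_n)(g\cdot\nu_n)^*$ is again $C$-separated weakly reduced and $\alpha_g(S_\mu)=S_{g\cdot\mu}$. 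Combining Lemma~\ref{lem:formula-for-P} on both sides with the invariance $N_{g\cdot\mu}=N_\mu$ provided by Lemma~\ref{lem:formula-for-P-groupaction}, I obtain
\begin{equation*}
P(\alpha_g(S_\mu))=N_{g\cdot\mu}P_{s(g\cdot\mu)}=N_\mu P_{g\cdot s(\mu)}=\alpha_g(N_\mu P_{s(\mu)})=\alpha_g(P(S_\mu)),
\end{equation*}
as required.

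Finally, for the asserted compatibility with $P_r$, given $x=\Lambda(a)\in C^*_r(E,C)$ I compute
\begin{equation*}
P_r(\tilde\alpha_g(x))=P_r(\Lambda(\alpha_g(a)))=P(\alpha_g(a))=\alpha_g(P(a))=\alpha_g(P_r(\Lambda(a)))=\alpha_g(P_r(x)).
\end{equation*}
The only genuine obstacle in the plan is the intertwining of $P$ with $\alpha_g$ on generators, which is precisely where Lemma~\ref{lem:formula-for-P-groupaction} is crucial; freeness of the action is not needed for this argument per se, but is the standing hypothesis under which the statement is invoked in the sequel. Everything else reduces to a routine application of the reduced-quotient framework developed in Section~\ref{cap3}.
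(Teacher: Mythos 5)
Your proposal is correct and follows essentially the same route as the paper: both reduce the statement to the intertwining relation $P\circ\alpha_g=\alpha_g\circ P$, verify it on elements $S_{\mu_1}S_{\nu_1}^*\cdots S_{\mu_n}S_{\nu_n}^*$ indexed by $C$-separated weakly reduced paths using the formula of Lemma~\ref{lem:formula-for-P} together with the invariance $N_{g\cdot\mu}=N_\mu$ from Lemma~\ref{lem:formula-for-P-groupaction}, and then invoke Lemma~\ref{lemaesperançacondicionalAPR} and the identity $P_r\circ\Lambda=P$ to descend to $C^*_r(E,C)$. Your added observations (that $\tilde\alpha_{g^{-1}}$ inverts $\tilde\alpha_g$ and that freeness is not actually needed for this argument) are accurate but do not change the substance.
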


\begin{proof}
The action $\alpha$ on $C^*(E,C)$ is given on generators by $\alpha_g(P_v) = P_{g \cdot v}$ and $\alpha_g(S_e) = S_{g \cdot e}$ for every $v \in E^0$, $e \in E^1$ and $g \in G$. For each $g \in G$, it is enough to show that the *-homomorphism $\alpha_g:\CEC \to \CEC $ commutes with respect to the conditional expectations, that is, the following diagram commutes: 
	\[
	\begin{tikzcd}
		&\CEC  \arrow{d}{P} \arrow{r}{\alpha_g}  &  \CEC   \arrow{d}{P}  \\
		&C_0(E^0) \arrow{r}{\alpha_g|_{C_0(E^0)}} & C_0(E^0).
	\end{tikzcd}
	\]
	Since $\alpha_g$ is linear and continuous, it is enough to check the commutativity on basic elements of the form $S_{\mu_1} S_{\nu_1}^* \cdots S_{\mu_n} S_{\nu_n}^* \in C(E,C)$ for $\mu\defeq \mu_1\nu_1^*\cdots \mu_n\nu_n^*$ a $C$-separated weakly reduced path. Using \eqref{eq:condexpeconbasiselements} and Lemma \ref{lem:formula-for-P-groupaction}, we have 
	\begin{align*}
		P(\alpha_g(S_{\mu_1} S_{\nu_1}^* \cdots S_{\mu_n} S_{\nu_n}^*)) &= P(S_{g \cdot \mu_1} S_{g \cdot  \nu_1}^* \cdots S_{g \cdot  \mu_n} S_{g \cdot  \nu_n}^*) \\ 
		&= N_{g\cdot \mu}P_{g \cdot  s(\mu)} \\
		&= N_{\mu}P_{g \cdot  s(\mu)} \\
		&= \alpha_g(N_{\mu}P_{s(\mu)}) \\
		&= \alpha_g(P(S_{\mu_1} S_{\nu_1}^* \cdots S_{\mu_n} S_{\nu_n}^*)).
	\end{align*}
	 We conclude that the diagram above in fact commutes. According to Lemma \ref{lemaesperançacondicionalAPR}, $\alpha_g$ factors through an action $\tilde{\alpha_g}: \CECR \to \CECR$ such that $\tilde{\alpha_g}(\Lambda(x)) = \Lambda(\alpha_g(x))$ for all $x \in \CEC$. Finally observe that 
	$$\P_r(\tilde{\alpha_g}(\Lambda(x)))  = \P_r(\Lambda(\alpha_g(x)))	= \P(\alpha_g(x)) = \alpha_g(\P(x)) = \alpha_g(\P_r(\Lambda(x))). $$
This implies that $\P_r \circ \tilde{\alpha_g} = \alpha_g \circ \P_r$, as desired. 
	\end{proof}

\begin{theorem}\label{isomorfismo2}
	With notations as above, there is a canonical isomorphism 
	$$C_r^*(E\times_c G,C \times_c G) \cong C_r^*(E,C) \rtimes_{\delta_c^r} G.$$ 
	Under this isomorphism, the action $\tilde{\gamma}$ on $C_r^*(E\times_c G,C\times_c G)$ induced by the translation action $\gamma$ on $\CEGC$
	corresponds to the dual action $\widehat{\delta_c^r}$ on $C_r^*(E,C)\rtimes_{\delta_c^r}G$.
\end{theorem}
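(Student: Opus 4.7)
The plan is to derive this reduced isomorphism as the image of the full isomorphism from Theorem \ref{isomorgrafosskewseparproducruzado} under the reduced-quotient construction. Write $A := \CEGC$ and $B := \CEC$, and let $P' \colon A \to C_0(E^0 \times G)$ and $P \colon B \to C_0(E^0)$ denote the canonical conditional expectations. By Corollary \ref{cor:red-separated-cond-exp}, $A_{P',r} \cong \CEGCR$. On the other hand, since $\delta_c$ acts trivially on $C_0(E^0)$, the expectation $P$ is automatically $G$-equivariant, and Lemma \ref{lemaAprotmesG} identifies $(B \rtimes_{\delta_c} G)_{P \rtimes G, r}$ with $\CECR \rtimes_{\delta_c^r} G$. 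So the game is to show that the isomorphism $\phi \colon A \to B \rtimes_{\delta_c} G$ of Theorem \ref{isomorgrafosskewseparproducruzado} intertwines $P'$ and $P \rtimes G$; once this is done, Lemma \ref{lemaesperançacondicionalAPR} produces the required isomorphism $\phi_r$ between the reduced quotients, which via the above identifications becomes $\CEGCR \cong \CECR \rtimes_{\delta_c^r} G$. The codomain $C_0(E^0) \rtimes_{\delta_c} G$ of $P \rtimes G$ is canonically identified with $C_0(E^0 \times G) \subseteq A$ via $j_B(P_v) j_G^B(\chi_{g^{-1}}) \mapsto P_{(v,g)}$, which is exactly the restriction of $\phi^{-1}$ to that subalgebra (since $\delta_c$ is trivial there, the crossed product is just a tensor product).

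I expect the intertwining to be the main technical point. I would verify it on the basic products $S_{(\varsigma, g)}$ indexed by $C \times G$-separated weakly reduced paths $(\varsigma, g)$ in $E \times_c G$, which span a dense subspace of $A$, in the notation of the proof of Theorem \ref{isomorgrafosskewseparproducruzado}. On the left, Lemma \ref{lem:formula-for-P} applied to the skew-product separated graph gives $P'(S_{(\varsigma, g)}) = N_{(\varsigma, g)} P_{(s(\varsigma), g)}$. On the right, using $\phi(S_{(\varsigma, g)}) = j_G^B(\chi_{g^{-1}}) j_B(S_\varsigma)$ (established in the proof of Theorem \ref{isomorgrafosskewseparproducruzado}) together with the covariance relation \eqref{condicaodem} and Lemma \ref{lem:formula-for-P} for $(E,C)$, I obtain $(P \rtimes G)(\phi(S_{(\varsigma,g)})) = N_\varsigma\, j_B(P_{s(\varsigma)}) j_G^B(\chi_{g^{-1}})$. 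The key subtle point is the equality $N_{(\varsigma, g)} = N_\varsigma$: this holds because the recursive reduction producing $N_\mu$ in Lemma \ref{lem:formula-for-P} only sees the cardinalities of the partition sets, and $|X_g| = |X|$ for every $X \in C_v$ and $g \in G$ by construction of $C \times_c G$. This is essentially the same principle that underlies Lemma \ref{lem:formula-for-P-groupaction}, applied here to the free translation action on $E \times_c G$.

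Finally, the equivariance $\phi_r \circ \tilde\gamma_g = \widehat{\delta_c^r}_g \circ \phi_r$ follows by naturality. The full isomorphism $\phi$ is already equivariant for $\gamma$ and $\widehat{\delta_c}$ by Theorem \ref{isomorgrafosskewseparproducruzado}; both actions descend to the reduced quotients (the preceding lemma constructs $\tilde\gamma$ from $\gamma$, and Lemma \ref{lemaAprotmesG} constructs $\widehat{\delta_c^r}$ as the factorization of $\widehat{\delta_c}$ through the reduced quotient); and Lemma \ref{lemaesperançacondicionalAPR} applied to $G$-equivariant morphisms automatically produces equivariant reductions. Hence $\phi_r$ is equivariant, completing the proof once the intertwining computation is in hand.
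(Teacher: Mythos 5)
Your proposal is correct and follows essentially the same route as the paper's proof: reduce everything to showing that the isomorphism $\phi$ of Theorem~\ref{isomorgrafosskewseparproducruzado} intertwines the canonical conditional expectations, verify this on the generators $S_{(\varsigma,g)}$ via $\phi(S_{(\varsigma,g)})=j_G^B(\chi_{g^{-1}})j_B(S_\varsigma)$ and the key identity $N_{(\varsigma,g)}=N_\varsigma$, and then descend to the reduced quotients by Lemma~\ref{lemaesperançacondicionalAPR}, with equivariance following by naturality. The paper likewise justifies $N_{(\varsigma,g)}=N_\varsigma$ by the method of Lemma~\ref{lem:formula-for-P-groupaction}, which is the same cardinality observation you make.
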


\begin{proof} By Lemmas \ref{lemaesperançacondicionalAPR} and \ref{lemaAprotmesG} and Corollary~\ref{cor:red-separated-cond-exp}, to get the isomorphism from the statement  
it is enough to show that the isomorphism $\phi: \CEGC \to \CEC \rtimes_{\delta_c} G$ from Theorem \ref{isomorgrafosskewseparproducruzado} commutes with the canonical conditional expectations, that is, the following diagram commutes:	
	\[
	\begin{tikzcd}
		&\CEGC  \arrow{d}{Q} \arrow{r}{\phi}  &  \CEC \rtimes_{\delta_c} G   \arrow{d}{P \rtimes G}  \\
		&C_0(E^0 \times G) \arrow{r}{\phi|} & C_0(E^0 ) \rtimes_{\delta_c} G
	\end{tikzcd}
	\]
where $P$ and $Q$ denote the canonical conditional expectation on $C^*(E,C)$ and $C^*(E\times_c G, C\times _c G)$, respectively.
	Set $B=C^*(E,C)$ and $D=C_0(E^0)$. We will adopt the same notation and caveats used in the proof of Theorem \ref{isomorgrafosskewseparproducruzado}, so we consider a $(C\times _cG)$-separated reduced path of the form
	$$(\varsigma, g)= (\mu_1, g)(\nu_1, z_1)^* \cdots (\mu_n,z_{n-1})(\nu_n,z_n)^*.$$
	In particular, $\varsigma = \mu_1\nu_1^*\cdots \mu_n\nu_n^*$ is a $C$-separated reduced path. As seen in the proof of Theorem \ref{isomorgrafosskewseparproducruzado}, we have $\phi(S_{(\varsigma, g)}) = j_G(\chi_{g^{-1}}) j_B(S_{\varsigma})$. Using \eqref{eq:condexpeconbasiselements}, we compute: 
	\begin{align*}
		P \rtimes G(\phi(S_{(\varsigma,g)})) &= P \rtimes G(j_G(\chi_{g^{-1}}) j_B(S_\varsigma ))      \\
		&= j_G(\chi_{g^{-1}}) (j_D\circ P)(S_\varsigma) \\
		&= j_G(\chi_{g^{-1}}) j_D(N_{\varsigma}P_{s(\varsigma)}) \\
        &= \phi(N_{\varsigma }P_{(s(\varsigma),g)}) \\
		&= \phi(N_{(\varsigma, g)}P_{(s(\varsigma),g)}) \\
		&= \phi(Q(S_{(\varsigma,g)})), 
	\end{align*} 
	where here the identity $N_{(\varsigma, g)}= N_{\varsigma}$ is shown by a method similar to the one used in the proof of Lemma \ref{lem:formula-for-P-groupaction}.
This shows the commutativity of the above diagram on a set of linear generators and hence also on the whole algebras by linearity and continuity.

	To finish we need to check the $G$-equivariance of the induced isomorphism 
	$$\phi_r\colon C_r^*(E\times_c G,C \times_c G) \congto C_r^*(E,C) \rtimes_{\delta_c^r} G.$$ 
	But this follows because it factors the $G$-equivariant homomorphism $\phi$ via quotient homomorphisms that are also $G$-equivariant.
	\end{proof}

\begin{corollary}\label{isomorfismo3}
	Let $(E,C)$ be a finitely separated graph and $c:E^1 \to G$ be a labeling function. Then $$C_r^*(E\times_c G,C \times_c G) \rtimes_{\tilde{\gamma},r} G \cong \CECR \otimes \K(\ell^2 G).$$
\end{corollary}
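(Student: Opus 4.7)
The plan is to deduce this corollary by composing the two main preceding results, namely Theorem~\ref{isomorfismo2} and Corollary~\ref{isomorfismo1}. First I would invoke Theorem~\ref{isomorfismo2}, which already provides a canonical isomorphism
$$\phi_r\colon C_r^*(E\times_c G, C\times_c G) \congto C_r^*(E,C)\rtimes_{\delta_c^r} G,$$
together with the crucial equivariance property that the action $\tilde\gamma$ on the left hand side induced by the translation $G$-action on the skew product separated graph corresponds, under $\phi_r$, to the dual action $\widehat{\delta_c^r}$ of $G$ on the crossed product $C_r^*(E,C)\rtimes_{\delta_c^r}G$.

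Next, since $\phi_r$ is $G$-equivariant with respect to these two actions, it descends to a canonical isomorphism of reduced crossed products
$$\phi_r\rtimes_r G\colon C_r^*(E\times_c G, C\times_c G)\rtimes_{\tilde\gamma,r} G \congto \bigl(C_r^*(E,C)\rtimes_{\delta_c^r} G\bigr)\rtimes_{\widehat{\delta_c^r},r} G.$$
This step uses only the functoriality of the reduced crossed product construction for equivariant isomorphisms.

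Finally, I would apply Corollary~\ref{isomorfismo1}, which states that
$$C_r^*(E,C)\rtimes_{\delta_c^r} G\rtimes_{\widehat{\delta_c^r},r} G\cong C_r^*(E,C)\otimes \K(\ell^2 G),$$
this being precisely the content of the normality of the coaction $\delta_c^r$ established in Proposition~\ref{prop:coaction-reduced-separated}. Composing the two isomorphisms yields the desired conclusion.

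There is essentially no main obstacle, since all the hard work has already been carried out: the identification with an iterated double crossed product was performed in Theorem~\ref{isomorfismo2}, and the collapse of the double crossed product to the tensor with compacts was performed in Corollary~\ref{isomorfismo1} as a consequence of the normality of $\delta_c^r$. The only delicate point worth remarking on is the equivariance statement in Theorem~\ref{isomorfismo2}, which is exactly what allows the reduced crossed product by $\tilde\gamma$ on the skew product side to be transported to the reduced crossed product by the dual coaction side.
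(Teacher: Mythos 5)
Your proposal is correct and follows exactly the paper's own route: the paper's proof is the one-line statement that the corollary follows from Theorem~\ref{isomorfismo2} and Corollary~\ref{isomorfismo1}, and your write-up simply makes explicit the intermediate step (functoriality of reduced crossed products applied to the equivariant isomorphism $\phi_r$) that the paper leaves implicit.
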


\begin{proof}
	This follows from Corollary \ref{isomorfismo1} and Theorem $\ref{isomorfismo2}$.
	\end{proof}

\begin{corollary}
	For a free action $\theta$ of a group $G$ on a finitely separated graph $(E,C)$, there is a canonical isomorphism
	$$C_r^*(E,C)\rtimes_{\tilde{\theta},r}G\cong C_r^*(E/G,C/G)\otimes\K(\ell^2 G).$$
\end{corollary}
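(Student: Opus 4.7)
My plan is to combine the separated Gross-Tucker Theorem with Corollary~\ref{isomorfismo3} in exactly the same way that Corollary~\ref{corolariografosskewseparados1} was derived from Corollary~\ref{corolariografosskewseparados} in the full case. First, I would invoke Theorem~\ref{theo:Gross-Tucker-Separated} to produce a labeling function $c\colon (E/G)^1\to G$ and a $G$-equivariant isomorphism of separated graphs
$$(E,C)\cong (E/G\times_c G,\,C/G\times_c G),$$
where the action on the right-hand side is the canonical translation action $\gamma$. By the universal property of $C^*(E,C)$, this $G$-equivariant isomorphism induces a $G$-equivariant \Star{}isomorphism $C^*(E,C)\cong C^*(E/G\times_c G,C/G\times_c G)$ intertwining $\theta$ and $\gamma$.

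Next I would descend this to the reduced level. The canonical conditional expectations $P$ on $C^*(E,C)$ and $Q$ on $C^*(E/G\times_c G,C/G\times_c G)$ are both given by the formula~\eqref{eq:condexpeconbasiselements}, so the $G$\nobreakdash-equivariant isomorphism in the previous paragraph commutes with these conditional expectations (this follows from the $G$\nobreakdash-invariance of the numerical coefficients $N_\mu$, which is Lemma~\ref{lem:formula-for-P-groupaction}, combined with the identification of vertices under the graph isomorphism). Applying Lemma~\ref{lemaesperançacondicionalAPR}, the isomorphism factors through a $G$\nobreakdash-equivariant \Star{}iso\-mor\-phism of the reduced \cstar{}algebras
$$C^*_r(E,C)\cong C^*_r(E/G\times_c G,\,C/G\times_c G),$$
intertwining $\tilde\theta$ and $\tilde\gamma$. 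Passing to reduced crossed products then gives
$$C^*_r(E,C)\rtimes_{\tilde\theta,r}G\cong C^*_r(E/G\times_c G,\,C/G\times_c G)\rtimes_{\tilde\gamma,r}G.$$

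Finally, I would apply Corollary~\ref{isomorfismo3} to the finitely separated graph $(E/G,C/G)$ with the labeling $c$, which yields
$$C^*_r(E/G\times_c G,\,C/G\times_c G)\rtimes_{\tilde\gamma,r}G\cong C^*_r(E/G,C/G)\otimes\K(\ell^2 G).$$
Composing the two isomorphisms produces the desired identification. I do not expect any real obstacle here; the only technical point requiring care is verifying that the Gross-Tucker isomorphism intertwines $P$ and $Q$ so that Lemma~\ref{lemaesperançacondicionalAPR} applies, and this is immediate from the fact that both $P$ and $Q$ are determined on basis elements by the same combinatorial formula and the fact that the graph isomorphism sends $C$-separated weakly reduced paths of $(E,C)$ to $C/G\times_c G$-separated weakly reduced paths with the same coefficients $N_\mu$.
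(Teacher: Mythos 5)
Your proposal is correct and follows exactly the paper's route: the paper derives this corollary in one line from the Gross--Tucker Theorem~\ref{theo:Gross-Tucker-Separated} together with Corollary~\ref{isomorfismo3}, and your argument is simply a careful unfolding of that same deduction (including the verification, via the combinatorial invariance of the coefficients $N_\mu$, that the induced isomorphism of full \cstar{}algebras intertwines the canonical conditional expectations and hence descends to the reduced level). The only cosmetic remark is that Lemma~\ref{lem:formula-for-P-groupaction} is stated for automorphisms of a fixed separated graph rather than for general isomorphisms, but its proof applies verbatim in your setting, so there is no gap.
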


\begin{proof}
	This follows from Corollary \ref{isomorfismo3} and the Gross-Tucker Theorem~\ref{theo:Gross-Tucker-Separated}.	
\end{proof}

%Compiling all results that we have seen so far in this section we get the following diagram of isomorphisms:
%	\[
%	\begin{tikzcd}
%		\CEGCR \rtimes_{\tilde{\gamma},r} G \arrow{dr}{\ref{isomorfismo3}} \arrow{rr}{\ref{isomorfismo2}}  & &  \CECR \rtimes_{\delta_c^r} G \rtimes_{\widehat{\delta_c^r},r} G  \arrow{dl}{\ref{isomorfismo1}}  \\
%		&\CECR \otimes \K(l^2(G))  \\
%	\end{tikzcd}
%	\]

\begin{remark}
Unlike what happens with ordinary (unseparated) graphs, in the separated case neither $\gamma$ nor $\tilde{\gamma}$ are amenable in general, meaning that full and reduced crossed products by $\gamma$ or $\tilde\gamma$ might be non-isomorphic.
From our results, we get the following commutative diagram, where except for the vertical map, all others are isomorphisms:
	\[
	\begin{tikzcd}
		\CEGC \rtimes_{\gamma} G \arrow{dr}{\ref{corolariografosskewseparados}} \arrow{rr}{\ref{isomorgrafosskewseparproducruzado}}  & &  \CEC \rtimes_{\delta_c} G \rtimes_{\widehat{\delta}_c} G  \arrow{dl}{\ref{isomordeltamaximal}} \\
		&\CEC \otimes \K(\ell^2 G) \arrow[twoheadrightarrow]{d}  \\
		&\CECR \otimes \K(\ell^2 G)  \\
		\CEGCR \rtimes_{\tilde{\gamma},r} G \arrow{ur}{\ref{isomorfismo3}} \arrow{rr}{\ref{isomorfismo2}}  & &  \CECR \rtimes_{\delta_c^r} G \rtimes_{\widehat{\delta^r_c},r} G  \arrow{ul}{\ref{isomorfismo1}}  \\
	\end{tikzcd}
	\]
\end{remark}

\section{The Fell bundle approach}\label{sec:Fell-bundle}

In this section we analyze the Fell bundles obtained from the spectral decompositions of the coactions $\delta_c$ on the full \cstar{}algebra $C^*(E,C)$ and $\delta_c^r$ on the reduced \cstar{}algebra $C^*_r(E,C)$ of separated graphs $(E,C)$ associated to a labeling function $c\colon E^1\to G$. We show that the Fell bundle from $\delta_c^r$ is a quotient of the Fell bundle from $\delta_c$. This is a proper quotient in general, reflecting the fact that $\delta_c^r$ might not be the normalization of $\delta_c$.

This point of view will highlight the precise connection between the several Fell bundle structures involved in the whole picture and will also make it more explicit where the normalization of $\delta_c$ and the maximalization of $\delta_c^r$ are situated in general. 

We will derive these results as a consequence of a more general result describing the reduced \cstar{}algebra $C^*(\A)_{P,r}$ associated with a certain conditional expectation $P$ onto a \cstar{}subalgebra $B\sbe A_1\sbe C^*(\A)$ for a general Fell bundle $\A=(A_g)_{g\in G}$.  Our result will describe $C^*(\A)_{P,r}$ as the reduced cross-sectional \cstar{}algebra of a quotient Fell bundle of $\A$ by some ideal $\J\sbe \A$ whenever $P$ vanishes on the fibers $A_g$ for $g\not=1$. 

Let us fix a separated graph $(E,C)$ and a labeling function $c: E^1 \to G$ to a group $G$.
We already know from Theorem~\ref{isomordeltamaximal} that the coaction 
$$\delta_c\colon C^*(E,C)\to C^*(E,C)\otimes C^*(G)$$ is maximal. This means that we have a canonical isomorphism 
\begin{equation}\label{eq:iso-spectral-dec-delta_c}
\kappa\colon C^*(\A)\congto C^*(E,C)
\end{equation}
where $\A=(A_g)_{g\in G}$ is the Fell bundle associated to $\delta_c$, that is, 
$$A_g=C^*(E,C)_g:=\{a\in \CEC: \delta_c(a)=a\otimes g\}$$
and $\kappa$ is the $*$-homomorphism on $C^*(\A)$ obtained from the embeddings $A_g\into \CEC$ viewed as a Fell bundle representation. By the way, one could give an alternative proof for the fact that $\delta_c$ is maximal by using the universal property of $\CEC$ in order to prove that $\kappa$ is an isomorphism by producing an inverse homomorphism $\CEC\to C^*(\A)$. This can be done using the dense $*$-subalgebra $L(E,C)\sbe C^*(E,C)$ and the $G$-grading obtained from $c\colon E^1\to G$ as will be described in what follows.

To describe the spectral subspaces $A_g$ inside $C^*(E,C)$, we define 
\begin{multline}\label{eq:spectral-subspaces}
	L(E,C)_g:=\spn\{S_{\mu_1}S_{\nu_1}^* \ldots S_{\mu_n}S_{\nu_n}^* ~|~ \varsigma=\mu_1\nu_1^*\ldots \mu_n\nu_n^* \\ \text{ is a $C$-separated reduced } \text{ path with } c(\varsigma)=g\}.
\end{multline}
It is straightforward to see that this gives $L(E,C)$ a natural algebraic $G$-grading structure meaning that we have a direct sum decomposition $L(E,C) = \displaystyle\oplus_{g\in G}^{\text{alg}} L(E,C)_g$ with grading property: $L(E,C)_g \cdot L(E,C)_h \subseteq L(E,C)_{gh}$ and $L(E,C)_g^* = L(E,C)_{g^{-1}}$ for all $g,h \in G$. Essentially this follows from Proposition \ref{basegrafosseparados}. With this, we get a description of the spectral subspaces of $\CEC$:  

\begin{proposition}\label{propsubespaçosespectrais}
	Given a labeling function $c: E^1 \to G$ on a separated graph $(E,C)$, we have
	\begin{align*}
		C^*(E,C)_g= \overline{L(E,C)}_g
	\end{align*} where $\CEC_g$ is the spectral subspace associated with the coaction $\delta_c$. 
\end{proposition}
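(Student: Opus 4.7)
The plan is to rely on the bounded spectral projection $E_g \defeq (\id \otimes \chi_g) \circ \delta_c$ associated with the coaction $\delta_c$. As recalled in Section~\ref{sec:coactions}, the characteristic function $\chi_g$ lies in $A(G) \subseteq B(G) \cong C^*(G)^*$ and so yields a bounded functional on $C^*(G)$, making $E_g$ a norm-continuous linear idempotent on $C^*(E,C)$ whose image is exactly the spectral subspace $C^*(E,C)_g$, acting as the identity there and vanishing on $C^*(E,C)_h$ for $h \neq g$.

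For the inclusion $\overline{L(E,C)}_g \subseteq C^*(E,C)_g$, I would simply invoke the formula for $\delta_c$ established in Theorem~\ref{isomordeltamaximal}: applied to a basic element $S_{\mu_1}S_{\nu_1}^*\cdots S_{\mu_n}S_{\nu_n}^*$ with $c(\varsigma) = g$, it returns the same element tensored with $c(\mu_1)c(\nu_1)^{-1}\cdots c(\mu_n)c(\nu_n)^{-1} = g$. Hence $L(E,C)_g \subseteq C^*(E,C)_g$, and the inclusion passes to the norm closure because the spectral subspace is norm-closed (being the range of the continuous projection $E_g$).

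For the reverse inclusion, the key observation is that the basis of $L(E,C)$ supplied by Proposition~\ref{basegrafosseparados} splits along the labeling: every $C$-separated reduced-path basis element $S_{\mu_1}S_{\nu_1}^*\cdots S_{\mu_n}S_{\nu_n}^*$ lies in exactly one summand $L(E,C)_{c(\varsigma)}$, so at the vector-space level
\[
L(E,C) = \bigoplus_{h \in G}^{\mathrm{alg}} L(E,C)_h.
\]
For a finite sum $x = \sum_h x_h \in L(E,C)$ with $x_h \in L(E,C)_h$, the first part yields $\delta_c(x) = \sum_h x_h \otimes h$, and therefore $E_g(x) = \sum_h \chi_g(h) x_h = x_g \in L(E,C)_g$. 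Given now $a \in C^*(E,C)_g$, I would approximate $a$ in norm by a sequence $(a_n) \subseteq L(E,C)$; then $E_g(a_n) \in L(E,C)_g$ for every $n$, and by continuity $E_g(a_n) \to E_g(a) = a$, showing $a \in \overline{L(E,C)}_g$.

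No genuine obstacle is anticipated: the whole argument is the standard spectral-slice technique, and the only point that truly needs Proposition~\ref{basegrafosseparados} is the algebraic $G$-grading of $L(E,C)$, which then combines with continuity of $E_g$ to transfer the decomposition to the closure.
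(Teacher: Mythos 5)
Your proposal is correct and follows essentially the same route as the paper's proof: the easy inclusion via the formula for $\delta_c$ on basis elements, and the reverse inclusion by applying the contractive spectral projection $E_g=(\id\otimes\chi_g)\circ\delta_c$ to approximants from $L(E,C)$ and using Proposition~\ref{basegrafosseparados} to see that $E_g$ maps $L(E,C)$ into $L(E,C)_g$. The only cosmetic difference is that the paper spells out the estimate $\|x-E_g(\tilde x)\|\leq\|x-\tilde x\|$ where you invoke continuity of $E_g$ directly.
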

\begin{proof}
	It is immediate that $L(E,C)_g$ is contained in $C^*(E,C)_g$ since $$\delta_c(S_{\mu_1}S_{\nu_1}^* \ldots S_{\mu_n}S_{\nu_n}^*) = S_{\mu_1}S_{\nu_1}^* \ldots S_{\mu_n}S_{\nu_n}^* \otimes c(\varsigma) = S_{\mu_1}S_{\nu_1}^* \ldots S_{\mu_n}S_{\nu_n}^* \otimes g$$ where $\varsigma=\mu_1\nu_1^*\ldots \mu_n\nu_n^*$ is a $C$-separated reduced path with $c(\varsigma)=g$. By continuity we have $\overline{L(E,C)}_g \subseteq \CEC_g$. Conversely, suppose that $x \in \CEC$ is such that $\delta_c(x) = x \otimes g$. Since $L(E,C)$ is dense in $\CEC$, $x$ can be approximated by an element $\tilde{x} \in L(E,C)$. Consider the spectral projection $E_g: \CEC \to \CEC_g$ as defined in Section~\ref{sec:coactions}, that is, $E_g = (\id_ \otimes \chi_g)\circ \delta_c$. Since $E_g(x) = x$ we have $$\|x - E_g(\tilde{x})\| =\|E_g(x) - E_g(\tilde{x})\| \leq \|x - \tilde{x}\|.$$ By Proposition \ref{basegrafosseparados} we can consider $\tilde{x}$ as a sum of elements of the form $S_{\mu_1}S_{\nu_1}^* \ldots S_{\mu_n}S_{\nu_n}^*$ with $\varsigma=\mu_1\nu_1^*\ldots \mu_n\nu_n^*$ a $C$-separated reduced path. For each sum factor observe that 
	\begin{align*}
		E_g(S_{\mu_1}S_{\nu_1}^* \ldots S_{\mu_n}S_{\nu_n}^*) &= (\id_ \otimes \chi_g)\circ \delta_c(S_{\mu_1}S_{\nu_1}^* \ldots S_{\mu_n}S_{\nu_n}^*) \\ &= (\id_ \otimes \chi_g)(S_{\mu_1}S_{\nu_1}^* \ldots S_{\mu_n}S_{\nu_n}^* \otimes c(\varsigma)) \\ &= \begin{cases}
		S_{\mu_1}S_{\nu_1}^* \ldots S_{\mu_n}S_{\nu_n}^* & \text{if } c(\varsigma )= g \\
		0 & \text{if } c(\varsigma )\ne  g .
		\end{cases}
	\end{align*}By linearity we have $E_g(\tilde{x}) \in L(E,C)_g$. Therefore, every element $x$ of $\CEC_g$ can be approximated by $E_g(\tilde{x}) \in L(E,C)_g$ as desired. This completes the proof. 
\end{proof}

A similar result also holds for $C^*_r(E,C)$ in place of $C^*(E,C)$ in the above proposition. Indeed, the same proof goes through if we take any \cstar{}algebra completion of $L(E,C)$ for which there exists a coaction $\delta_c^A\colon A\to A\otimes C^*(G)$ satisfying
$$\delta_c^A(S_e)=S_e\otimes c(e)\quad\mbox{and}\quad\delta_c^A(P_v)=P_v\otimes 1$$
for all edges $e\in E^1$ and vertices $v\in E^0$. In this situation we then get, as above, that the associated spectral subspaces are given by
$$A_g:=\{a\in A: \delta_c^A(a)=a\otimes g\}=\overline{L(E,C)}_g^A$$
where we use the superscript $A$ in the closure above to emphasize that the norm of $A$ is being used. This makes a big difference here: although all spectral subspaces `look similar', they are quite different in general due to the possible difference of the norms involved. This difference will be made more explicit in what follows, where we analyze the general situation of a Fell bundle $\A=(A_g)_{g\in G}$.

Let $A$ be a $G$-graded \cstar{}algebra, that is, $A=\overline\oplus_{g\in G}A_g$ for a family of linearly independent closed subspaces $A_g\sbe A$ satisfying $A_g A_h\sbe A_{gh}$ and $A_g^{*}= A_{g^{-1}}$ for all $g,h\in G$. In this situation the family $\A=(A_g)_{g\in G}$ becomes a Fell bundle with the operations inherited from $A$. 

If $J\sbe A$ is a (closed, two-sided) ideal of $A$, then defining $J_g:=J\cap A_g\sbe A_g$, the family $\J=(J_g)_{g\in G}$ becomes an ideal of the Fell bundle $\A$ in the sense of \cite{Exel:Exact_groups_Fell}*{Definition~2.1}, that is, $J_g A_h\sbe J_{gh}$ and $A_g J_h\sbe J_{gh}$ for all $g,h\in G$. We shall say that $\J$ is the ideal of $\A$ induced by $J\sbe A$. As observed in \cite{Exel:Exact_groups_Fell}, $\J=(J_g)_{g\in G}$ is a Fell bundle in its own, and so is also the quotient $\A/\J=(A_g/J_g)_{g\in G}$. Observe, in particular, that $J_1=J\cap A_1$ is an ideal of $A_1$ and we have $J_g=A_g\cdot J_1=J_1\cdot A_g$. Although we do not need this, we also observe that this means that $J_1$ is an $\A$-invariant ideal of $A_1$ as defined in \cite{Kwasniewski-Meyer:Aperiodicity}.

\begin{proposition}\label{prop:Fell-bundle-quotient-conditional}
Let $A=\overline\oplus_{g\in G} A_g$ be a topologically $G$-graded \cstar{}algebra with conditional expectation $E_1\colon A\to A_1$.  
Let $P\colon A\to B$ be a symmetric conditional expectation onto a \cstar{}subalgebra $B\sbe A_1$ with nucleus 
$$\mathcal{N}_P=\{x \in A : P(x^*x)=0\}=\{x\in A : P(xx^*)=0\}\idealin A.$$ 
Assume that $P$ vanishes on $A_g$ for all $g \neq 1$. If $\J=(J_g:=\NN_P\cap A_g)_{g\in G}$ is the ideal of $\A=(A_g)_{g\in G}$ induced by $\NN_P$, then
the quotient maps $A_g\to A_g/J_g$ induce an isomorphism
$$A_{P,r}\cong C_r^*(\A / \J).$$ 
\end{proposition}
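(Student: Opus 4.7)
My strategy is to show that $A/\NN_P$ is topologically $G$-graded by the Fell bundle $\A/\J$ and carries a faithful canonical conditional expectation onto the unit fiber $A_1/J_1$; once this is done, the identification of such a $C^*$-algebra with $C^*_r$ of the associated Fell bundle (this is Proposition~\ref{propmaximalcoaçãofibras} applied to the induced coaction, cf.\ also \cite{Exel:Partial_dynamical}) will give $A/\NN_P\cong C^*_r(\A/\J)$. Since $P|_{A_g}=0$ for $g\ne 1$ and $\bigoplus_g A_g$ is norm-dense in $A$, the conditional expectation $E_1\colon A\to A_1$ of the grading satisfies $P=Q\circ E_1$, where $Q\defeq P|_{A_1}\colon A_1\to B$.

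The first main step is to show that the contractive Fourier projections $F_g\colon A\to A_g$ provided by the topological grading send $\NN_P$ into $J_g$. On the algebraic direct sum $\bigoplus_g A_g$, writing $a=\sum_h a_h$ with $a_h\in A_h$, one has $E_1(a^*a)=\sum_h a_h^*a_h$, whence
\begin{equation*}
F_g(a)^*F_g(a)\leq E_1(a^*a)\quad\text{and}\quad F_g(a)F_g(a)^*\leq E_1(aa^*),
\end{equation*}
and by continuity these inequalities persist on all of $A$. Applying $Q$ and using $P=Q\circ E_1$ together with the symmetry of $P$, one obtains $F_g(\NN_P)\subseteq\NN_P\cap A_g=J_g$. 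This yields a topological $G$-grading of $A/\NN_P$ by $\A/\J$: the natural maps $A_g/J_g\hookrightarrow A/\NN_P$ realize the grading subspaces, linear independence of the images follows by applying $F_h$ to a vanishing finite sum, density of the total span is inherited from $A$, and $E_1$ descends to a conditional expectation $\bar E_1([a])\defeq[E_1(a)]$, well-defined because Kadison--Cauchy--Schwarz yields $E_1(\NN_P)\subseteq J_1$.

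The crucial analytic input is that $\bar E_1$ is faithful. If $\bar E_1([a]^*[a])=0$, then $E_1(a^*a)\in J_1\subseteq\NN_P$; since $E_1(a^*a)$ is a positive element of the closed two-sided ideal $\NN_P$, functional calculus gives $E_1(a^*a)^{1/2}\in\NN_P$ and hence $P(E_1(a^*a))=0$. Because $E_1$ is idempotent on $A_1$, $P(E_1(a^*a))=Q(E_1(a^*a))=P(a^*a)$, so $a\in\NN_P$, i.e.\ $[a]=0$. This is the step that most decisively uses the hypothesis $B\subseteq A_1$ with $P$ supported on the unit fiber. The principal technical obstacle is therefore the first step (preservation of $\NN_P$ under the Fourier projections); once that is secured, the topological grading on $A/\NN_P$ and the faithfulness of $\bar E_1$ follow cleanly, and the isomorphism $A_{P,r}\cong C^*_r(\A/\J)$ is then the Fell bundle characterization of reduced $C^*$-algebras.
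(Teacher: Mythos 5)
Your proof is correct, but it takes a genuinely different route from the paper's. The paper argues ``downstream'': it equips $C^*_r(\A/\J)$ with the faithful conditional expectation $\tilde P_1\circ\tilde E_1\colon C^*_r(\A/\J)\to B$ (where $\tilde P_1$ is induced by $P|_{A_1}$ on $A_1/J_1=(A_1)_{P_1,r}$ and $\tilde E_1$ is the canonical expectation of the reduced cross-sectional algebra), checks that this expectation factors $P$ through the surjection $A\onto C^*_r(\A)\onto C^*_r(\A/\J)$ by evaluating on fibers, and then invokes the uniqueness of the reduced quotient (Corollary~\ref{cor:prop-KM-red-quotient-exp}). You instead work ``upstream'': you show that the Fourier projections $F_g$ map $\NN_P$ into $J_g$ via the inequality $F_g(a)^*F_g(a)\le E_1(a^*a)$ together with $P=P|_{A_1}\circ E_1$ and symmetry, deduce that $A_{P,r}=A/\NN_P$ is itself topologically graded by $\A/\J$ with a faithful expectation $\bar E_1$ onto the unit fiber, and then apply the standard recognition of such algebras as $C^*_r$ of their grading bundle. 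Both arguments hinge on the same two hypotheses ($P$ supported on the unit fiber and symmetric), and both are complete; your version has the advantage of exhibiting explicitly that $\A/\J$ \emph{is} the grading bundle of $A_{P,r}$ (which the paper only gets a posteriori and uses later when matching coactions), at the cost of verifying the grading axioms by hand, whereas the paper's version is shorter because it delegates all uniqueness issues to the reduced-quotient machinery already established. One small point: the recognition step in the stated generality should be attributed to the topological-grading criterion in \cite{Exel:Partial_dynamical} (faithfulness of the expectation onto the unit fiber forces $\psi$ in~\eqref{eq:diag-canonical-epi} to be injective) rather than to Proposition~\ref{propmaximalcoaçãofibras}, which is phrased for coactions; a topological grading need not come from a coaction a priori, though this does not affect the validity of your argument.
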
 
\begin{proof}
Let $P_1:=P|_{A_1}$ denote the restriction of the conditional expectation $P$ to $A_1$. Then $J_1= \mathcal{N}_P \cap A_1=\NN_{P_1}$. 

Since $A$ is topologically graded, it `lies' between $C^*(\A)$ and $C^*_r(\A)$, see \cite{Exel:Partial_dynamical}*{Theorem~19.5}.
More precisely, we have a quotient homomorphism $Q\colon A\to C^*_r(\A)$ which is the `identity' on the fibers $A_g$ viewed as subspaces of both $A$ and $C^*_r(\A)$.

	First of all, note that $\tilde{P_1}: A_1/J_1 \to B$ defined by $\tilde{P_1}(q_1(a)) = P_1(a)$ for $a\in A_1$ is a well-defined faithful conditional expectation because $J_1=\NN_{P_1}$ so that $A_1/J_1=(A_1)_{P_1,r}$. On the other hand, we also have the canonical faithful conditional expectation $\tilde E_1 :=E_1^{\A/\J}\colon C^*_r(\A/\J)\to A_1/J_1$ which vanishes on all fibers $\tilde A_g:=A_g/J_g$ of the quotient Fell bundle $\tilde\A=\A/\J$ for $g\not=1$ and acts as the identity on $\tilde A_1=A_1/J_1$. Therefore $\tilde P_1\circ \tilde E_1$ is also a faithful conditional expectation $C^*_r (\tilde{\mathcal A})\to B$.
	
	We view $C^*_r(\tilde\A)$ as a quotient of $A$ via $\tilde q:=q \circ Q\colon A\onto C^*_r(\A)\onto C^*_r(\tilde\A)$. To finish the proof of the proposition we observe that
	$$\tilde P_1\circ\tilde E_1\circ \tilde q=P.$$
Indeed, to prove this equation it is enough to check it on elements of the fibers $a\in A_g$ for $g\in G$ as these generate $A$ as a Banach space. Now, if $g\not=1$ then both sides of the above equation vanish by the assumption on $P$. And if $g=1$, then $a\in A_1$ and both sides also agree by the construction of $\tilde P_1$. 
The desired result now follows from Corollary~\ref{cor:prop-KM-red-quotient-exp}.	
\end{proof}

Observe that we can apply the above proposition to any \cstar{}algebra $A$ carrying a coaction of $G$, in particular for $A=C^*(\A)$ the full sectional \cstar{}algebra of a Fell bundle $\A$ and a conditional expectation $P\colon C^*(\A)\onto B$ onto a \cstar{}algebra $B\sbe C^*(\A)$ contained in $A_1$ and such that $P$ vanishes on all fibers $A_g$ for $g\not=1$. In this setting we then get a canonical isomorphism
$$C^*(\A)_{P,r}\cong C^*_r(\A/\J).$$

\begin{theorem}\label{theo:spectral-dec-reduced}
	Let $(E,C)$ be a finitely separated graph and let $c: E^1 \to G$ be a labeling function. Then 
	$$C_r^*(E,C) \cong C_r^*(\A/\J),$$
	where $\A$ is the Fell bundle associated to the spectral decomposition of the coaction $\delta_c$ of $G$ on $C^*(E,C)$ induced from $c$ as in Theorem~\ref{isomordeltamaximal}. 
	Moreover, under the above isomorphism, the coaction $\delta_c^r$ on $\CECR$ corresponds to the dual coaction $\delta_{\A/\J}^r$ on $C^*_r(\A/\J)$. Thus $\A/\J$ realizes the spectral decomposition of $\delta_c^\red$.
\end{theorem}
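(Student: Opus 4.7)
The plan is to apply Proposition~\ref{prop:Fell-bundle-quotient-conditional} directly to $A = C^*(E,C)$, viewed as a topologically $G$\nobreakdash-graded \cstar{}algebra with fibers $A_g = C^*(E,C)_g$ arising from the spectral decomposition of $\delta_c$, together with the canonical conditional expectation $P\colon C^*(E,C)\to C_0(E^0)$. Since $\delta_c$ is maximal by Theorem~\ref{isomordeltamaximal}, we have $C^*(E,C)\cong C^*(\A)$ canonically; and by Corollary~\ref{cor:red-separated-cond-exp}, $P$ is symmetric and satisfies $C^*(E,C)_{P,r}\cong C^*_r(E,C)$.

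The key hypothesis to check is that $P$ vanishes on $A_g$ for every $g\ne 1$. By Proposition~\ref{propsubespaçosespectrais}, $A_g$ is the closed linear span of elements $S_{\mu_1}S_{\nu_1}^*\cdots S_{\mu_n}S_{\nu_n}^*$ corresponding to $C$\nobreakdash-separated reduced paths $\varsigma = \mu_1\nu_1^*\cdots\mu_n\nu_n^*$ with $c(\varsigma)=g$. The universal property of the free label (Definition~\ref{def:free-label}) factors $c$ through a group homomorphism $\mathbb F\to G$ sending $\f (\varsigma)\mapsto c(\varsigma)$, so $c(\varsigma)\ne 1$ forces $\f (\varsigma)\ne 1$. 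Lemma~\ref{lem:formula-for-P} then gives $P(S_{\mu_1}S_{\nu_1}^*\cdots S_{\mu_n}S_{\nu_n}^*)=0$ on each such generator, and by linearity and continuity $P|_{A_g}=0$. Proposition~\ref{prop:Fell-bundle-quotient-conditional} now yields
$$C^*_r(E,C)\cong C^*(E,C)_{P,r}\cong C^*_r(\A/\J),$$
which is the first statement.

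For the coaction statement, I would note that the isomorphism above is induced by a surjection $C^*(E,C)=C^*(\A)\twoheadrightarrow C^*_r(\A/\J)$ that factors through the fiberwise quotient maps $A_g\twoheadrightarrow A_g/J_g$, where $J_g=A_g\cap\NN_P$. On $A_g\subseteq C^*(E,C)$ the coaction $\delta_c$ acts by $a\mapsto a\otimes g$, so the induced map on $A_g/J_g\subseteq C^*_r(\A/\J)$ acts by the same formula, which is precisely the dual coaction $\delta_{\A/\J}^r$. By Proposition~\ref{prop:coaction-reduced-separated}, the quotient of $\delta_c$ on $C^*_r(E,C)$ is $\delta_c^r$, so $\delta_c^r$ corresponds to $\delta_{\A/\J}^r$ under the isomorphism. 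The final claim that $\A/\J$ realizes the spectral decomposition of $\delta_c^r$ then follows since, for the normal coaction $\delta_{\A/\J}^r$, the associated Fell bundle coincides tautologically with $\A/\J$. The main obstacle in the whole argument is verifying that $P$ annihilates the off-diagonal spectral subspaces, which is what reduces (via Lemma~\ref{lem:formula-for-P}) to the universality of the free label.
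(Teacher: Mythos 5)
Your proposal is correct and follows essentially the same route as the paper: verify via Lemma~\ref{lem:formula-for-P} (using that $c(\varsigma)\ne 1$ forces $\f(\varsigma)\ne 1$) that $P$ annihilates the spectral subspaces $A_g$ for $g\ne 1$, then invoke the maximality of $\delta_c$, Corollary~\ref{cor:red-separated-cond-exp} and Proposition~\ref{prop:Fell-bundle-quotient-conditional} to get $C^*_r(E,C)\cong C^*(\A)_{P,r}\cong C^*_r(\A/\J)$, with equivariance coming from the fact that the isomorphism is induced by the fiberwise quotient maps. This matches the paper's argument in both structure and detail.
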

\begin{proof}
Consider $P:\CEC \to C_0(E^0)$ the canonical conditional expectation and the ideal $\mathcal{N}_P = \{x \in \CEC ~|~ P(x^*x)=0\}$ of $\CEC$. Notice that Lemma~\ref{lem:formula-for-P} implies that $P$ vanishes on the fibers $A_g$ for $g\not=1$. With the construction above, $\J = \{J_g\}_{g \in G}$ is the ideal of the Fell bundle $\A$ associated to the spectral subspaces. Remember that $P$ is symmetric and vanishes on $A_g$ for $g \neq 1$ and since $\delta_c$ is a maximal coaction we have $C^*(\A) \cong C^*(E,C)$. Therefore $$C_r^*(E,C) \cong C^*(E,C)_{P,r} \cong C^*(\A)_{P,r} \cong C_r^*(\A/\J)$$ as desired. Since this isomorphism is induced by the quotient map $\A\to \A/\J$, which is a morphism of Fell bundles, it automatically preserves the dual coactions.
\end{proof}

Observe that the above theorem gives an alternative proof for the fact that the coaction $\delta_c^r$ of $G$ on $C^*_r(E,C)$ is normal.

Our results give Fell bundle structures (in particular a topological grading) for the full and reduced \cstar{}algebras of separated graphs for every choice of a labeling function $c\colon E^1\to G$, in particular for the free label $\fl\colon E^1\to \Free$ to the free group generated by its edges, which is an intrinsic label that only depends on the graph. We summarize these results in the following:

\begin{corollary}
For every separated graph $(E,C)$, there exists a Fell bundle $\A$ over the free group $\Free$ on $E^1$ with fibers $A_g=\overline{L(E,C)}_g\sbe C^*(E,C)$ given by 
\begin{multline}\label{eq:spectral-subspaces-free}
	L(E,C)_g:=\spn\{S_{\mu_1}S_{\nu_1}^* \ldots S_{\mu_n}S_{\nu_n}^* ~|~ \varsigma=\mu_1\nu_1^*\ldots \mu_n\nu_n^* \\ \text{ is a $C$-separated reduced } \text{ path with } \fl(\varsigma)=g\}.
\end{multline}
in such a way that the inclusion maps $A_g\into C^*(E,C)$ induce an equivariant  isomorphism $C^*(E,C)\cong C^*(\A)$ with respect to the coactions $\delta_\fl$ and $\delta_\A$.

Moreover, if $(E,C)$ is finitely separated, then the quotient Fell bundle $\A/\J$ by the ideal $\J$ induced by the nucleus $\NN_P$ of the canonical conditional expectation $P\colon C^*(E,C)\to C_0(E^0)$ yields a reduced Fell bundle structure for $C^*_r(E,C)$ in the sense that there is an isomorphism $C^*_r(E,C)\cong C^*_r(\A/\J)$ respecting the coactions $\delta_\fl^r$ and $\delta_{\A/\J}$.
\end{corollary}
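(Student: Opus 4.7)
The plan is to derive the corollary as a direct specialization of Theorems \ref{isomordeltamaximal} and \ref{theo:spectral-dec-reduced} to the free label $\fl\colon E^1\to\Free$. The essential technical work already appears in the paper, so the task reduces to assembling the right instances of those results.

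For the first assertion, I would begin by applying Theorem \ref{isomordeltamaximal} with $G=\Free$ and $c=\fl$, which produces the maximal coaction $\delta_\fl\colon C^*(E,C)\to C^*(E,C)\otimes C^*(\Free)$ determined on generators by $\delta_\fl(S_e)=S_e\otimes\fl(e)$ and $\delta_\fl(P_v)=P_v\otimes 1$. The resulting spectral decomposition yields a Fell bundle $\A=(A_g)_{g\in\Free}$; Proposition \ref{propsubespaçosespectrais}, applied to the label $\fl$, identifies each fiber $A_g=C^*(E,C)_g$ with the closure $\overline{L(E,C)}_g$ of the subspace described in the statement. Since $\delta_\fl$ is maximal, Proposition \ref{propmaximalcoaçãofibras} then delivers the canonical isomorphism $\sigma\colon C^*(\A)\congto C^*(E,C)$ induced by the inclusions $A_g\into C^*(E,C)$, and $\sigma$ intertwines the dual coaction $\delta_\A$ with $\delta_\fl$ by construction.

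For the second assertion, assuming $(E,C)$ is finitely separated, I would invoke Theorem \ref{theo:spectral-dec-reduced} with $c=\fl$, which immediately furnishes the isomorphism $C^*_r(E,C)\cong C^*_r(\A/\J)$, where $\J=(A_g\cap\NN_P)_{g\in\Free}$ is the Fell bundle ideal of $\A$ induced by the nucleus $\NN_P$ of the canonical conditional expectation $P\colon C^*(E,C)\to C_0(E^0)$. Because this isomorphism descends from the quotient Fell bundle morphism $\A\onto\A/\J$, it automatically carries $\delta_\fl^r$ to the dual coaction $\delta_{\A/\J}$.

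No real obstacle remains: the corollary is a direct specialization of the preceding theorems. The only conceptual point worth flagging is the universality of the free label from Definition \ref{def:free-label}: because every label $c\colon E^1\to G$ factors through $\fl$ via a unique group homomorphism $\Free\to G$, the Fell bundle over $\Free$ constructed above is intrinsic to $(E,C)$ and is, in a suitable sense, the finest grading among all those arising from labelings, with every other Fell bundle structure of the form $\A_c$ arising as a pushforward of it.
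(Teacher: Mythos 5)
Your proposal is correct and follows exactly the route the paper intends: the corollary is stated there as a summary, obtained by specializing Theorem~\ref{isomordeltamaximal}, Proposition~\ref{propsubespaçosespectrais} (via Proposition~\ref{propmaximalcoaçãofibras}, equivalently the isomorphism~\eqref{eq:iso-spectral-dec-delta_c}), and Theorem~\ref{theo:spectral-dec-reduced} to the free label $\fl\colon E^1\to\Free$. Your closing remark on the universality of the free label matches the paper's own motivation and adds nothing that would need further justification.
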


Notice that all the components $L(E,C)_g$ intercept $\NN_P$ trivially as $P$ is faithful on $L(E,C)$. This means that the $g$-fiber of $\A/\J$ can also be viewed as a completion of $L(E,C)_g$, but with a different norm than that of $\A_g$, namely, the reduced norm coming from the embedding $L(E,C)_g\into C^*_r(E,C)$. We are going to see in examples that $\J$ is non-trivial in general, even for the free label. We will see that the completions of $L(E,C)_g$ in $C^*(E,C)$ or $C^*_r(E,C)$ are quite different in general, and a precise general description of these seems challenging!

\begin{remark}	
Recall that the normalization of the coaction $\delta_\A$ on the sectional \cstar{}algebra $C^*(\A)$ is always maximal and the coaction $\delta_\A^r$ on $C^*_r(\A)$ is always normal. Moreover, $\delta_\A^r$ is the normalization of $\delta_\A$ while $\delta_\A$ is the maximalization of $\delta_\A^r$.

Now consider a labeling function $c\colon E^1\to G$ on a (finitely) separated graph and the corresponding coactions $\delta_c$ on $C^*(E,C)$ and $\delta_c^r$ on $C^*_r(E,C)$.  Let $\A$ be the Fell bundle associated to $\delta_c$ giving $C^*(E,C)\cong C^*(\A)$ and the corresponding quotient $\A/\J$ by the ideal induced by $\NN_P$ as in Theorem~\ref{theo:spectral-dec-reduced} that gives $C^*_r(E,C)\cong C^*_r(\A/\J)$, where both isomorphisms respect the underlying coactions of $G$.

Then the normalization of $\delta_c$ takes place on an ``exotic'' \cstar{}algebra $C^*_\nu(E,C)\cong C^*_r(\A)$ that can be viewed as a completion of $L(E,C)$ lying between $C^*(E,C)$ and $C^*_r(E,C)$ as illustrated in the following diagram 
 \[
	\begin{tikzcd}
		C^*(\A) \arrow[twoheadrightarrow]{rr} \arrow[d]{}{\cong}  & &C_r^*(\A)\arrow[d]{}{\cong} \arrow[rlll, bend right,swap]{}{\text{ normalization }} \arrow[twoheadrightarrow]{r} & C_r^*(\A/\J) \arrow[d]{}{\cong} \\
		\CEC \arrow[dashrightarrow]{rr} & & C_{\nu}^*(E,C) \arrow[rlll, bend left]{}{\text{ normalization }} \arrow[dashrightarrow]{r} & \CECR . \\
	\end{tikzcd}
\]
	
Similarly, the maximalization of $\delta_c^r$ takes place on an exotic completion $C^*_\mu(E,C)\cong C^*(\A/\J)$ of $L(E,C)$ lying between $C^*(E,C)$ and $C^*_r(E,C)$:
	\[
	\begin{tikzcd}
		C^*(\A) \arrow[twoheadrightarrow]{r} \arrow[d]{}{\cong}  & C^*(\A/\J)\arrow[d]{}{\cong} \arrow[lrrr, bend left]{}{\text{ maximalization }} \arrow[twoheadrightarrow]{rr} & & C_r^*(\A/\J) \arrow[d]{}{\cong} \\
		\CEC \arrow[dashrightarrow]{r}  & C_{\mu}^*(E,C) \arrow[lrrr, bend right,swap]{}{\text{ maximalization }} \arrow[dashrightarrow]{rr} & & \CECR . \\
	\end{tikzcd}
	\]
\end{remark}

\subsection{Some examples}

The following examples intend to illustrate some of our results on the description of \cstar{}algebras of separated graphs and their decompositions in terms of Fell bundles associated with the free label. It should be clear how intricate and complex the structure of those Fell bundles might be; in particular it seems to be difficult to describe precisely the unit fiber of those Fell bundles in certain concrete examples.

\begin{example}\label{ex:ord-graphs}
	Let us revisit the ordinary graph case, that is, the case of a graph $E$ with the trivial separation $C_v=\{ s^{-1}(v)\}$ for all $v\in E^0$.
	We assume that $E$ is row-finite, that is, $s^{-1}(v)$ is finite for every $v$. Then $C^*(E,C)=C_r^*(E,C)=C^*(E)$ is the ordinary graph \cstar{}algebra of $E$, and this equals the closed linear span of elements of the form $S_\mu S_{\nu}^*$ for $\mu, \nu$ (finite) paths on $E$ with $r(\mu)=r(\nu)$. Let us consider the free label $\fl\colon E^1\to \Free$ on $E$ and let $\A=(A_g)_{g\in \Free}$ be the corresponding Fell bundle. We know that
	$$A_g=\cspn\{S_\mu S_\nu^*:\mu,\nu\mbox{ paths on $E$ with $r(\mu)=r(\nu)$ and  $\fl(\mu\nu^{*})=g$}\}.$$
	Viewing paths on $E$ as elements of $\Free$ we have $\fl(\mu\nu^*)=\mu\nu^{-1}$. It follows that the only non-zero fibers $A_g$ are those where $g\in \Free$ is of the reduced form $g=\mu\nu^{-1}$ for $\mu,\nu$ paths on $E$ with $r(\mu)=r(\nu)$. And if $g=\mu\nu^{-1}$ has this form, we have
	$$A_g=\cspn\{S_\mu S_\gamma S_\gamma^* S_\nu^*: \gamma \mbox{ a path on } E\}=S_\mu A_1 S_\nu^*,$$
	where $A_1=\cspn\{S_\gamma S_\gamma^*: \gamma \mbox{ a path on } E\}$ is the unit fiber of $\A$, which is also the usual `diagonal' of $C^*(E)$. It is a commutative \cstar{}algebra whose spectrum $\Omega_E$ is a totally disconnected locally compact space which in case $E$ has no sinks and no sources is homeomorphic to the infinite path space $E^\infty$ of $E$. Indeed, this is related to the well-known fact that $C^*(E)$ is isomorphic to the crossed product $\contz(E^\infty)\rtimes_\theta\Free$ for a suitable partial action $\theta$ of $\Free$ on $E^\infty$, see \cite{Exel:Partial_dynamical}*{Proposition~37.9}. And the Fell bundle of this partial action is exactly the Fell bundle $\A$ above. If $g=\mu\nu^{-1}$ is as above, then $D_g:= \cspn A_g A_g^*=S_\mu A_1 S_\mu^*$,
	and the partial action $\theta$ is induced from the isomorphism $D_{g^{-1}}=S_\nu A_1 S_\nu^*\congto D_g=S_\mu A_1 S_\mu^*$ sending $S_\nu a S_\nu^*\mapsto S_\mu a S_\mu^*=S_\mu S_\nu^*(S_\nu a S_\nu^*)(S_\mu S_\nu^*)^*$ for $a\in A_1$. On the level of $E^\infty$ this corresponds to the homeomorphism $\nu E^\infty\to \mu E^\infty$, $\nu x\mapsto \mu x$. The fact that $C^*(E)$ is nuclear is equivalent (by \cite{BEW-Amenable}*{Corollary 4.11}) to the fact that $\A$ has the approximation property (see \cite{Exel:Partial_dynamical}*{Theorem~36.20}); in particular $C^*(\A)=C^*_r(\A)$, which is compatible with $C^*(E,C)=C^*_r(E,C)=C^*(E)$, of course.
\end{example}

\begin{example}
	%Consider a graph $E$ with only one vertex $v$ and the separation $C$ where all sets are singletons $X=\{e\}$ with $e\in E^1$, for example, one could 
	Consider the Cuntz separated graph $(A_n,D)$ as in Example~\ref{excuntzgrafoseparado}. As noted in Example~\ref{excanonicografoseparado}, we have $C^*(A_n,D)=C^*(\Free)$, the full \cstar{}algebra of the free group $\Free=\Free_{A_n^1}$;
	and as noted in Examples~\ref{ex:red-Cuntz}, the conditional expectation $P$ coincides with the canonical trace on $C^*(\Free)$ and we have $C^*_r(A_n,D)=C^*_r(\Free)$.  The Fell bundle $\A$ associated to the free label is the trivial Fell bundle $\A=\C\times\Free$ over $\Free$ with all fibers $A_g\cong\C$. In this case we have $\NN_{\tau} \neq 0$ since $\mathbb{F}_n$ is not amenable but $\NN_{\tau} \cap A_g = 0$ for all $g$, so that $\J=0$.
\end{example}

\begin{example}
	Now we consider the separated graph $(E,C)$ with one vertex $E^0= \{v\}$ and four edges $E^1= \{a_1,a_2,b_1,b_2\}$ with separation $C= \{X,Y\}$, $X= \{a_1,a_2\}$, $Y=\{ b_1,b_2\}$; this is a particular case of Example~\ref{exgrafosepararo2}. Then $C^*(E,C)= \mathcal O _2 \star_\C \mathcal O_2$ is the universal unital \cstar{}algebra generated by four isometries $S_1,S_2$, $T_1,T_2$ with $S_1S_1^*+S_2S_2^*=1$ and $T_1T_1^*+T_2T_2^*=1$. And $C^*_r(E,C)= \mathcal O _2\star_\C^r\mathcal O _2$ is the corresponding reduced free product.  
	We first observe that $C^*(E,C)\ne C^*_r(E,C)$ because $C^*_r(E,C)$ is simple by \cite[Proposition 4.2]{Ara-Goodearl:C-algebras_separated_graphs}, but $C^*(E,C)$ is not simple; for instance, the quotient map $\mathcal O_ 2\star_\C \mathcal O _2 \onto \mathcal O_2$ that identifies $S_i \equiv T_i$, $i=1,2$, has non-trivial kernel. 
	
	Hence the canonical conditional expectation $P\colon C^*(E,C)\to C_0(E^0)$ is not faithful; we now show that it remains non-faithful when restricted to the unit fiber $A_1$ of the Fell bundle $\A=(A_g)_{g\in \Free}$ associated to the free label $\fl\colon E^1\to \Free=\Free_4$. 
	
	Consider the unit fiber $C_1$ of the Fell bundle associated to the free label on $C^*(E_X)\cong \Cuntz_2$ as described in Example~\ref{ex:ord-graphs}. As noted there, $C_1$ is generated by the projections $S_\mu S_\mu^*$ with $\mu$ a finite path in the graph $E_X$; it is commutative and isomorphic to $C(E_X^\infty)$ and agrees with the usual `diagonal' of the Cuntz algebra. Similarly we consider the unit fiber (diagonal) $C_2\cong C(E_Y^\infty)$ in $C^*(E_Y)\cong \Cuntz_2$. Of course, $C_1\cong C_2$.
	
	We also consider the \cstar{}subalgebra $B_1\sbe C_1$, isomorphic to $\C^4$, generated by the orthogonal projections $S_1S_1S_1^* S_1^*$, $S_1S_2S_2^* S_1^*$, $S_2S_1S_1^* S_2^*$, $S_2S_2S_2^* S_2^*$, and the corresponding \cstar{}subalgebra $B_2\sbe C_2$. We have $B_1 \subseteq C_1\subseteq C^*(E_X)$ and $B_2\subseteq C_2\subseteq C^*(E_Y)$ and hence
	$$B_1\star_\C B_2 \subseteq C_1\star_\C C_2 \subseteq C^*(E_X)\star_\C C^*(E_Y)= C^*(E,C)$$
	by \cite[Proposition 2.2]{ADEL}. Note that $C_1\star_\C C_2 \subseteq A_1$, but there are elements in $A_1$ that might not come from $C_1\star_\C C_2$, as for instance $S_1T_1T_1^*S_1^*\in A_1$. On the other hand, we also have
	$$B_1\star_\C^r B_2 \subseteq C_1\star_\C^r C_2 \subseteq C^*(E_X)\star_\C^r C^*(E_Y)= C^*_r(E,C)$$
	by the uniqueness of the reduced free product. 	And by \cite[Theorem 1.1]{PS},
	$$B_1\star_\C^r B_2 \cong \C^4 \star_\C^r \C^4 \cong C^*(\Z_4)\star_\C^r C^*(\Z_4) \cong C^*_r (\Z_4\star \Z_4)$$
	is a non-nuclear simple \cstar{}algebra with a unique tracial state. 
	In particular the map $B_1\star_\C B_2\to B_1\star_\C^r B_2$ cannot be injective. We have a commutative diagram 
	\[
	\begin{tikzcd}
		B_1\star_\C B_2  \arrow[rightarrow]{r} \arrow[twoheadrightarrow]{d}  & A_1 \arrow[twoheadrightarrow]{d} \arrow[rightarrow]{rr} & & \mathcal O _2\star_\C \mathcal O_2 \arrow[twoheadrightarrow]{d} \\
		B_1\star_\C^r B_2  \arrow[rightarrow]{r}  & \Lambda (A_1)  \arrow[rightarrow]{rr} & & \mathcal O _2\star_\C^r \mathcal O_2 
	\end{tikzcd}
	\]
	All the horizontal maps are injective. The map $B_1\star_\C B_2\to B_1\star _{\C}^r B_2$ is not injective. Hence the map $A_1\to \Lambda (A_1)$ is not injective. This implies that the restriction of the canonical conditional expectation on $C^*(E,C)\cong C^*(\A)$ to $A_1$ is not faithful.
\end{example}

\begin{example}
	Let $(E,C)$ be the separated graph described in Figure \ref{fig:partiisometry}.
	\begin{center}{
			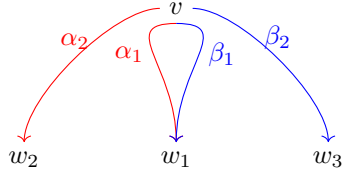
\begin{figure}[htb]
				\begin{tikzpicture}[scale=2]
					\node (v) at (1,1)  {$v$};
					\node (w_1) at (1,0) {$w_1$};
					\node (w_2) at (0,0) {$w_2$};
					\node (w_3) at (2,0) {$w_3$};
					\draw[->,red]  (v.west) ..  node[above]{$\alpha_2$} controls+(left:3mm) and +(up:3mm) ..
					(w_2.north) ;
					\draw[->,red] (v.south) .. node[below, left]{$\alpha_1$}  controls+(left:4mm) and +(up:5mm) ..
					(w_1.north);
					\draw[->,blue] (v.south) .. node[below, right]{$\beta_1$}
					controls+(right:4mm) and +(up:5mm) ..
					(w_1.north);
					\draw[->,blue] (v.east) .. node[above]{$\beta_2$}
					controls+(right:3mm) and +(up:3mm) ..
					(w_3.north);
				\end{tikzpicture}
				\caption{The separated graph of a partial isometry}
				\label{fig:partiisometry}
		\end{figure}}
	\end{center}
	We set $X= \{\alpha_1,\alpha_2\}$ and $Y= \{ \beta_1,\beta_2\}$ so that $C= \{ X,Y\}$. 
	Here the interesting algebra is the full corner algebra $A= vC^*(E,C)v$ of $C^*(E,C)$; to simplify notation, here and in the following, we shall write $v$ for the projection $P_v\in C^*(E,C)$ corresponding to the vertex $v$.    
	By \cite[Lemma 5.5(1)]{Ara:Purely_infinite}, there is an isomorphism between $A$ and the universal \cstar{}algebra generated by a single partial isometry $s$. This isomorphism sends $\beta_1\alpha_1^*$ to $s$. We will write $s= \beta_1\alpha_1^*\in A$. In order to apply the results about the normal form of a linear basis of $L(E,C)$, we have to select edges $e_X\in X$ and $e_Y\in Y$. There is a standard choice in this case, given by taking $e_X=\alpha_2$ and $e_Y = \beta_2$. Then a $C$-separated reduced path $\gamma $ such that $s(\gamma)= r(\gamma)= v$ is a path of the form $$\gamma = s^{n_1}(s^*)^{n_2}\cdots \cdots (s^*)^{n_r},$$ 
	where $n_1,n_r\ge 0$ and $n_i >0$ for $i= 2,\dots , r-1$, such that the path $\gamma $ does not contain any subpath of the form $ss^*s$ or $s^*ss^*$.

	Now we study the free label on the \cstar{}algebra $A$. Let $\mathbb F$ be the free group on $E^1$. The free label induces a topological grading on $A$. Let $g =  \beta_1 \alpha_1^{-1}\in \mathbb F$. Then the only elements $h$ of $\mathbb F$ for which $A_h\ne 0$ are those of the form $g^i$ for $i\in \Z$.
	We thus obtain a topological grading     
	$$A= \ol{ \bigoplus _{i\in \Z} A_{g^i}},$$
	induced by the free label.
	For instance, the algebra $A_1$ contains, among other things, elements of the form $\gamma _ n = s^n(s^*)^n$ and $\gamma_{-n}= (s^*)^ns^n$, for $n\ge 1$.      
\end{example}

In order to better understand the structure of the \cstar{}algebra $C^*(E,C)$ and its full corner $A=vC^*(E,C)v$, we introduce another \cstar{}algebra $C^*(F,D)$ associated to another separated graph $(F,D)$.    

\begin{example}
	\label{exam:lamplighter} Let $(F,D)$ be the separated graph
	described in Figure \ref{fig:lampgroup}, with $D_v=\{ X', Y'\}$ and
	$X'=\{ \alpha _1,\alpha_2\}$ and $Y'=\{ \beta _1,\beta _2 \}$. By
	\cite[Lemma 5.5(2)]{Ara:Purely_infinite}, we have
	$$vC^*(F,D)v\cong C^*( (\star _{\Z}\Z_2)\rtimes \Z) \,$$
	where $\Z$ acts on $\star_{\Z} \Z_2$ by shifting the factors of the
	free product.

	\begin{center}{
			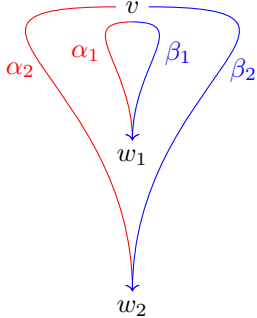
\begin{figure}[htb]
				\begin{tikzpicture}[scale=2]
					\node (v) at (0,1)  {$v$};
					\node (w_1) at (0,0) {$w_1$};
					\node (w_2) at (0,-1) {$w_2$};
					\draw[->,red]  (v.south) ..  node[below, left]{$\alpha_1$} controls+(left:4mm) and +(up:5mm) ..
					(w_1.north) ;
					\draw[->,red] (v.west) .. node[below, left]{$\alpha_2$}  controls+(left:14mm) and +(up:14mm) ..
					(w_2.north);
					\draw[->,blue] (v.east) .. node[below, right]{$\beta_2$}
					controls+(right:14mm) and +(up:14mm) ..
					(w_2.north);
					\draw[->,blue] (v.south) .. node[below, right]{$\beta_1$}
					controls+(right:4mm) and +(up:5mm) ..
					(w_1.north);
				\end{tikzpicture}
				\caption{The separated graph underlying the (free) lamplighter group}
				\label{fig:lampgroup}
		\end{figure}}
	\end{center}
	We also select $e_{X'}:= \alpha _2$ and $e_{Y'}:=\beta_2$ in this case. Write $B=vC^*(F,D)v$. The \cstar{}algebra $B$ is generated by two partial isometries $s:=\beta_1\alpha_1^*$ and $t:= \beta_2\alpha_2^*$. Observe that $z= s+t$ is a unitary in $B$ which corresponds to the unitary which generates the copy of $\Z$ in the isomorphism $B\cong C^*( (\star_{\Z}\Z_2)\rtimes \Z)$. Set $p_0^+=\alpha_1\alpha_1^*$, $p_0^-= 1-p_0^+= \alpha_2\alpha_2^*$, and $p_n^{\pm}= z^np_0^{\pm}(z^*)^n$ for $n\in \Z$. Then $u_n=1-2p_n^+$ is the unitary corresponding to the generator of the $n$-th copy of $\Z_2$ under the isomorphism	$B\cong C^*( (\star _{\Z}\Z_2)\rtimes \Z)$. See \cite{Ara:Purely_infinite}. 
	
	We now pass to the reduced \cstar{}algebras. It is shown in \cite[Proposition 5.6]{Ara:Purely_infinite} that 
	$$vC_r^*(F,D)v \cong C^*_r((\star _\Z \Z_2)\rtimes \Z)\cong (C^*_r(\star _\Z \Z_2) )\rtimes \Z .$$
	In particular, we can see from the proof of \cite[Proposition 5.6]{Ara:Purely_infinite} that the canonical conditional expectation $P\colon C^*(F,D)\to v\C + w_1\C + w_2\C$ restricts to the usual trace $\tau$ on $C^*((\star _\Z \Z_2)\rtimes \Z)$ when restricted to $B$. Let us denote by $B_{\tau,r}$ the reduction of $B$ induced by the tracial state $\tau$ on $B$. Note that $B_{\tau,r} = vC^*(F,D)_{P,r}v$. Observe that we have a natural $\Z$-grading in $B$ induced by the crossed product $B\cong  C^*(\star _\Z \Z_2)\rtimes \Z$. This is indeed induced by the labeling $F^1  \to \langle g \rangle $, where $\langle g\rangle $ is a multiplicative infinite cyclic group, given by $$\alpha_1\mapsto 1, \quad \alpha_2\mapsto 1,\quad \beta_1\mapsto g,\quad \beta_2\mapsto g.$$
	Note that the elements $s= zp_0^+$ and $t=zp_0^-$ have both degree $g$ with respect to the above labeling, on so the element $z= s+t$ belongs to the component $B_g$, as desired. The projections $p_n^+=z^np_0^+(z^*)^n$ belong to $B_1$, and they, together with $1$, generate the component $B_1$. 

Observe that the grading induced by the free label on $C^*(F,D)$ is finer than the above mentioned $\Z$-grading. Indeed the restriction to $B$ of the grading induced by the free label gives a $\mathbb F_2$-grading, with generators $s$ and $t$, and the $\Z$-grading is obtained via the group homomorphism $\mathbb F_2 \to \langle g \rangle $ given by sending both $s$ and $t$ to $g$. 
	
	We consider the \cstar{}algebra $B$, with its structure of topological $\Z$-graded \cstar{}algebra as given above, and the tracial state $\tau$ on $B$ seen as a conditional expectation onto $\C v\subseteq B_1$. Let $\mathcal B$ be the associated Fell bundle, and consider the corresponding ideal $J$ of $B_1$, which is the kernel of the restriction of $\tau$ to $B_1\cong C^*(\star_\Z \Z_2)$, and the Fell bundle ideal $\mathcal J$ of $\mathcal B$ induced by $J$.  We then have, by Proposition~\ref{prop:Fell-bundle-quotient-conditional},
	$$B_{\tau,r} \cong C^*_r(\mathcal B/\mathcal J),$$
	but note that since 
	$$C^*(\mathcal B/\mathcal J) \cong C^*_r(\star_\Z \Z_2) \rtimes \Z$$
	and $C^*_r(\star_\Z \Z_2) \rtimes \Z \cong C^*_r(\star_\Z \Z_2) \rtimes _r\Z $ because $\Z$ is an amenable group, we indeed have
	$$C^*(\mathcal B/\mathcal J)  \cong C^*_r(\star_\Z \Z_2) \rtimes \Z\cong C^*_r(\star_\Z \Z_2) \rtimes_r \Z \cong C^*_r(\mathcal B/\mathcal J) \cong  B_{\tau,r}. $$
	
	Now we consider the two \cstar{}algebras $C^*(E,C)$ and $C^*(F,D)$ above. By 
	\cite[Lemma 5.5(2) and Proposition~5.8]{Ara:Purely_infinite}, there is a trace-preserving unital embedding $vC^*(E,C)v \to vC^*(F,D)v$. That is, in the notation introduced above, we have a trace-preserving unital embedding $A\to B$.
	Note that, also with the notation introduced above, the partial isometry  denoted by $s$ in $A$ maps to the partial isometry denoted also by $s$ in $B$, cf. the proof of \cite[Proposition 5.8]{Ara:Purely_infinite}. Now it is clear that $A_{g^n}= B_{g^n}\cap A$, and in particular the free label on $(E,C)$ induces the same grading on $A$ as the restriction of the $\Z$-grading of $B$. Let $J_0$ be the kernel of the restriction of $\tau$ to $A_1$, and let $\mathcal J_0$ be the corresponding Fell bundle ideal for $\mathcal A$, where $\mathcal A$ is the Fell bundle associated to the topological $\Z$-grading of $A$. Then $J_0= J\cap A_1$ and $\mathcal J_0=\mathcal J \cap \mathcal A$. Thus $\mathcal A /\mathcal J_0$ is a Fell subbundle of $\mathcal B/\mathcal J$. Since $\Z$ is an amenable group, \cite[Proposition 21.7]{Exel:Partial_dynamical} gives that the natural map  
	$$\iota \colon  C^*(\mathcal A/\mathcal J_0) \to C^*(\mathcal B/ \mathcal J)$$
	is injective, and thus the restriction of the trace $\tau$ to $C^*(\mathcal A /\mathcal J_0)$ is faithful, which implies
	$$A_{\tau,r} = C^*_r(\mathcal A/\mathcal J_0)= C^*(\mathcal A/\mathcal J_0).$$
	On the other hand, $A$ is not exact by \cite{BrenkenNiu}, and $A_{\tau,r}=vC_r^*(E,C)v$ is exact by \cite[Remark 3.10]{Ara-Goodearl:C-algebras_separated_graphs}, so it follows that $\mathcal J_0\ne 0$, and so the free label on $C^*(E,C)$ is not faithful on the unit fiber $C^*(E,C)_1$.
	Moreover, since $A_1 \subseteq B_1^{\f}$, where $B_1^{\f}$ is the neutral component of the grading on $B$ induced by the free label, the conditional expectation on $C^*(F,D)$ is also not faithful on the unit fiber $C^*(F,D)_1$.   
	\end{example}

\section*{Acknowledgments}

The authors would like to thank the anonymous referee for his/her careful reading of the paper and his/her very helpful comments, which have improved the exposition of the paper.

\begin{bibdiv}
	\begin{biblist}
		\bib{Anantharaman-Delaroche:Systemes}{article}{
			author={Anantharaman-Delaroche, Claire},
			title={Syst\`emes dynamiques non commutatifs et moyennabilit\'e},
			journal={Math. Ann.},
			volume={279},
			date={1987},
			number={2},
			pages={297--315},
			issn={0025-5831},
			review={\MR {919508}},
			doi={10.1007/BF01461725},
		}
		
		\bib{Ara:Purely_infinite}{article}{
			author={Ara, Pere},
			title={Purely infinite simple reduced $C^*$\nobreakdash -algebras of one-relator separated graphs},
			journal={J. Math. Anal. Appl.},
			volume={393},
			date={2012},
			number={2},
			pages={493--508},
			issn={0022-247X},
			review={\MR {2921692}},
			doi={10.1016/j.jmaa.2012.04.014},
		}
		
		\bib{ABP2020}{article}{
			author={Ara, Pere},
			author={Bosa, Joan},
			author={Pardo, Enrique},
			title={The realization problem for finitely generated refinement monoids},
			journal={Selecta Math. (N.S.)},
			fjournal={Selecta Mathematica. New Series},
			volume={26},
			year={2020},
			number={3},
			pages={Paper No. 33, 63},
			issn={1022-1824},
			mrclass={16D70 (06F20 16E50 19K14 20K20 46L05)},
			mrnumber={4103463},
			mrreviewer={Abdelfattah Haily},
			doi={10.1007/s00029-020-00559-5},
			url={https://doi.org/10.1007/s00029-020-00559-5},
		}
		
		\bib{Ara-Exel:Dynamical_systems}{article}{
			author={Ara, Pere},
			author={Exel, Ruy},
			title={Dynamical systems associated to separated graphs, graph algebras, and paradoxical decompositions},
			journal={Adv. Math.},
			volume={252},
			date={2014},
			pages={748--804},
			issn={0001-8708},
			review={\MR {3144248}},
			doi={10.1016/j.aim.2013.11.009},
		}
		
		\bib{Ara-Goodearl:C-algebras_separated_graphs}{article}{
			author={Ara, Pere},
			author={Goodearl, Kenneth R.},
			title={$C^*$\nobreakdash -algebras of separated graphs},
			journal={J. Funct. Anal.},
			volume={261},
			date={2011},
			number={9},
			pages={2540--2568},
			issn={0022-1236},
			doi={10.1016/j.jfa.2011.07.004},
			review={\MR {2826405}},
		}
		
		\bib{Ara-Goodearl:Leavitt_path}{article}{
			author={Ara, Pere},
			author={Goodearl, Kenneth R.},
			title={Leavitt path algebras of separated graphs},
			journal={J. Reine Angew. Math.},
			volume={669},
			date={2012},
			pages={165--224},
			issn={0075-4102},
			review={\MR {2980456}},
			doi={10.1515/CRELLE.2011.146},
		}
		
		\bib{ADEL}{article}{
			author={Armstrong, Scott},
			author={Dykema, Kenneth J.},
			author={Exel, Ruy},
			author={Li, Hanfeng},
			title={On embeddings of full amalgamated free product $C^*$\nobreakdash -algebras},
			journal={Proc. Amer. Math. Soc.},
			volume={132},
			date={2004},
			number={7},
			pages={2019--2030},
			issn={0002-9939},
			review={\MR {2053974}},
			doi={10.1090/S0002-9939-04-07370-8},
		}
		
		\bib{Bates-Pask-Willis:Group_actions}{article}{
			author={Bates, Teresa},
			author={Pask, David},
			author={Willis, Paulette},
			title={Group actions on labeled graphs and their \cstar {}algebras},
			date={2012},
			journal={Illinois Journal of Mathematics},
			volume={56},
			number={4},
			pages={1149\ndash 1168},
			issn={0019-2082},
			review={\MR {3231477}},
		}
		
		\bib{Blackadar:Weak_expectations}{article}{
			author={Blackadar, Bruce E.},
			title={Weak expectations and nuclear {$C\sp {\ast } $}-algebras},
			journal={Indiana Univ. Math. J.},
			volume={27},
			year={1978},
			number={6},
			pages={1021--1026},
			issn={0022-2518},
			mrclass={46L05},
			mrnumber={511256},
			mrreviewer={Man-Duen Choi},
			doi={10.1512/iumj.1978.27.27070},
			url={https://doi.org/10.1512/iumj.1978.27.27070},
		}
		
		\bib{Blackadar:book}{book}{
			author={Blackadar, Bruce},
			title={Operator algebras},
			series={Encyclopaedia of Mathematical Sciences},
			volume={122},
			note={Theory of $C^*$\nobreakdash -algebras and von Neumann algebras; Operator Algebras and Non-commutative Geometry, III},
			publisher={Springer},
			place={Berlin},
			date={2006},
			pages={xx+517},
			isbn={978-3-540-28486-4},
			isbn={3-540-28486-9},
			review={\MR {2188261}},
			doi={10.1007/3-540-28517-2},
		}
		
		\bib{BrenkenNiu}{article}{
			author={Brenken, Berndt},
			author={Niu, Zhuang},
			title={The $\mathrm {C}^*$\nobreakdash -algebra of a partial isometry},
			journal={Proc. Amer. Math. Soc.},
			volume={140},
			date={2012},
			number={1},
			pages={199--206},
			issn={0002-9939},
			review={\MR {2833532}},
			doi={10.1090/S0002-9939-2011-10988-2},
		}
		
		\bib{Brown:Ext_free}{article}{
			author={Brown, Lawrence G.},
			title={Ext of certain free product $C^*$\nobreakdash -algebras},
			journal={J. Operator Theory},
			volume={6},
			date={1981},
			number={1},
			pages={135--141},
			issn={0379-4024},
			review={\MR {637007}},
			eprint={http://www.theta.ro/jot/archive/1981-006-001/1981-006-001-012.html},
		}
		
		\bib{Brown-Ozawa:Approximations}{book}{
			author={Brown, Nathanial P.},
			author={Ozawa, Narutaka},
			title={$C^*$\nobreakdash -algebras and finite-dimensional approximations},
			series={Graduate Studies in Mathematics},
			volume={88},
			publisher={Amer. Math. Soc.},
			place={Providence, RI},
			date={2008},
			pages={xvi+509},
			isbn={978-0-8218-4381-9},
			isbn={0-8218-4381-8},
			review={\MR {2391387}},
		}
		
		\bib{Buss-Echterhoff:Maximality}{article}{
			author={Buss, Alcides},
			author={Echterhoff, Siegfried},
			title={Maximality of dual coactions on sectional $C^*$-algebras of Fell bundles and applications},
			journal={Studia Math.},
			volume={229},
			date={2015},
			number={3},
			pages={233--262},
			issn={0039-3223},
			review={\MR {3454302}},
			doi={10.4064/sm8361-1-2016},
		}
		
		\bib{BEW-Amenable}{article}{
			author={Buss, Alcides},
			author={Echterhoff, Siegfried},
			author={Willett, Rufus},
			title={Amenability and weak containment for actions of locally compact groups on C*-algebras, preprint},
			date={2020},
			note={\arxiv {2003.03469}},
			status={eprint},
		}
		
		\bib{Cuntz:Simple_isometries}{article}{
			author={Cuntz, Joachim},
			title={Simple $C^*$\nobreakdash -algebras generated by isometries},
			journal={Comm. Math. Phys.},
			volume={57},
			date={1977},
			number={2},
			pages={173--185},
			issn={0010-3616},
			review={\MR {0467330}},
			eprint={http://projecteuclid.org/euclid.cmp/1103901288},
		}
		
		\bib{Cuntz-Krieger:topological_Markov_chains}{article}{
			author={Cuntz, Joachim},
			author={Krieger, Wolfgang},
			title={A class of $C^*$\nobreakdash -algebras and topological Markov chains},
			journal={Invent. Math.},
			volume={56},
			date={1980},
			number={3},
			pages={251--268},
			issn={0020-9910},
			review={\MR {561974}},
			doi={10.1007/BF01390048},
		}
		
		\bib{Deicke-Pask-Raeburn:Coverings}{article}{
			author={Deicke, Klaus},
			author={Pask, David},
			author={Raeburn, Iain},
			title={Coverings of directed graphs and crossed products of $C^*$\nobreakdash -algebras by coactions of homogeneous spaces},
			journal={Internat. J. Math.},
			volume={14},
			date={2003},
			number={7},
			pages={773--789},
			issn={0129-167X},
			review={\MR {2000743}},
			doi={10.1142/S0129167X03001995},
		}
		
		\bib{Duncan:Certain_free}{article}{
			author={Duncan, Benton~L},
			title={Certain free products of graph operator algebras},
			date={2010},
			journal={Journal of mathematical analysis and applications},
			volume={364},
			number={2},
			pages={534\ndash 543},
			issn={0022-247X},
			review={\MR {2576204}},
			doi={10.1016/j.jmaa.2009.11.023},
		}
		
		\bib{Dykema:Exactness}{article}{
			author={Dykema, Kenneth J.},
			title={Exactness of reduced amalgamated free product {$C^*$}-algebras},
			journal={Forum Math.},
			volume={16},
			year={2004},
			number={2},
			pages={161--180},
			issn={0933-7741},
			mrclass={46L09 (46L05 46L35 46L54)},
			mrnumber={2039095},
			mrreviewer={Emmanuel C. Germain},
			doi={10.1515/form.2004.008},
			url={https://doi.org/10.1515/form.2004.008},
		}
		
		\bib{Echterhoff-Kaliszewski-Quigg:Maximal_Coactions}{article}{
			author={Echterhoff, Siegfried},
			author={Kaliszewski, Steven P.},
			author={Quigg, John},
			title={Maximal coactions},
			journal={Internat. J. Math.},
			volume={15},
			date={2004},
			number={1},
			pages={47--61},
			issn={0129-167X},
			doi={10.1142/S0129167X04002107},
			review={\MR {2039211}},
		}
		
		\bib{Echterhoff-Kaliszewski-Quigg-Raeburn:Categorical}{article}{
			author={Echterhoff, Siegfried},
			author={Kaliszewski, Steven P.},
			author={Quigg, John},
			author={Raeburn, Iain},
			title={A categorical approach to imprimitivity theorems for $C^*$\nobreakdash -dynamical systems},
			journal={Mem. Amer. Math. Soc.},
			volume={180},
			date={2006},
			number={850},
			pages={viii+169},
			issn={0065-9266},
			review={\MR {2203930}},
			doi={10.1090/memo/0850},
		}
		
		\bib{Echterhoff-Quigg:InducedCoactions}{article}{
			author={Echterhoff, Siegfried},
			author={Quigg, John},
			title={Induced coactions of discrete groups on $C^*$\nobreakdash -algebras},
			journal={Canad. J. Math.},
			volume={51},
			date={1999},
			number={4},
			pages={745--770},
			issn={0008-414X},
			review={\MR {1701340}},
			doi={10.4153/CJM-1999-032-1},
		}
		
		\bib{Exel:TwistedPartialActions}{article}{
			author={Exel, Ruy},
			title={Twisted partial actions: a classification of regular $C^*$\nobreakdash -algebraic bundles},
			journal={Proc. Lond. Math. Soc. (3)},
			volume={74},
			date={1997},
			number={2},
			pages={417--443},
			issn={0024-6115},
			review={\MR {1425329}},
			doi={10.1112/S0024611597000154},
		}
		
		\bib{Exel:Partial_amenable_free}{article}{
			author={Exel, Ruy},
			title={Partial representations and amenable Fell bundles over free groups},
			journal={Pacific J. Math.},
			volume={192},
			date={2000},
			number={1},
			pages={39--63},
			issn={0030-8730},
			review={\MR {1741030}},
			doi={10.2140/pjm.2000.192.39},
		}
		
		\bib{Exel:Exact_groups_Fell}{article}{
			author={Exel, Ruy},
			title={Exact groups and Fell bundles},
			journal={Math. Ann.},
			volume={323},
			date={2002},
			number={2},
			pages={259--266},
			issn={0025-5831},
			review={\MR {1913042}},
			doi={10.1007/s002080200295},
		}
		
		\bib{Exel:Partial_dynamical}{book}{
			author={Exel, Ruy},
			title={Partial dynamical systems, Fell bundles and applications},
			series={Mathematical Surveys and Monographs},
			volume={224},
			date={2017},
			pages={321},
			isbn={978-1-4704-3785-5},
			isbn={978-1-4704-4236-1},
			publisher={Amer. Math. Soc.},
			place={Providence, RI},
			review={\MR {3699795}},
		}
		
		\bib{Eymard:FourierAlgebra}{article}{
			author={Eymard, Pierre},
			title={L'alg\`ebre de Fourier d'un groupe localement compact},
			journal={Bull. Soc. Math. France},
			volume={92},
			date={1964},
			pages={181--236},
			issn={0037-9484},
			review={\MR {0228628}},
			eprint={http://www.numdam.org/item?id=BSMF_1964__92__181_0},
		}
		
		\bib{fan-wang-2021}{article}{
			author={Fan, Haibin and Wang, Zhenpan},
			title={On {L}eavitt inverse semigroups},
			journal={J. Algebra Appl.},
			fjournal={Journal of Algebra and its Applications},
			volume={20},
			year={2021},
			number={9},
			pages={Paper No. 2150158, 7},
			issn={0219-4988},
			mrclass={20M18 (16S88)},
			review={\MR {4301163}},
			doi={10.1142/S0219498821501589},
			url={https://doi.org/10.1142/S0219498821501589},
		}
		
		\bib{Gross-Tucker:generating}{article}{
			author={Gross, Jonathan~L},
			author={Tucker, Thomas~W},
			title={Generating all graph coverings by permutation voltage assignments},
			date={1977},
			journal={Discrete Mathematics},
			volume={18},
			number={3},
			pages={273\ndash 283},
			review={\MR {465917}},
			doi={10.1016/0012-365X(77)90131-5},
		}
		
		\bib{Gross-Tucker:topolgical-graph-theory}{book}{
			author={Gross, Jonathan L.},
			author={Tucker, Thomas W.},
			title={Topological graph theory},
			note={Reprint of the 1987 original with a new preface and supplementary bibliography},
			publisher={Dover Publications Inc.},
			place={Mineola, NY},
			date={2001},
			pages={xvi+361},
			isbn={0-486-41741-7},
			review={\MR {1855951}},
		}
		
		\bib{Ivanov:structure_amalgamated_free_product}{article}{
			author={Ivanov, Nikolay A.},
			title={On the structure of some reduced amalgamated free product $C^*$\nobreakdash -algebras},
			journal={Internat. J. Math.},
			volume={22},
			date={2011},
			number={2},
			pages={281--306},
			issn={0129-167X},
			doi={10.1142/S0129167X11006799},
			review={\MR {2782689}},
		}
		
		\bib{Kaliszewski-Quigg-Raeburn:Skew_products}{article}{
			author={Kaliszewski, Steven P.},
			author={Quigg, John},
			author={Raeburn, Ian},
			title={Skew products and crossed products by coactions},
			journal={J. Operator Theory},
			volume={46},
			date={2001},
			number={2},
			pages={411--433},
			issn={0379-4024},
			review={\MR {1870415}},
			eprint={http://www.theta.ro/jot/archive/2001-046-002/2001-046-002-011.html},
		}
		
		\bib{Katayama:Takesaki_Duality}{article}{
			author={Katayama, Yoshikazu},
			title={Takesaki's duality for a nondegenerate co-action},
			journal={Math. Scand.},
			volume={55},
			date={1984},
			number={1},
			pages={141--151},
			issn={0025-5521},
			review={\MR {769030}},
			doi={10.7146/math.scand.a-12072},
		}
		
		\bib{Kumjian-Pask:C-algebras_directed_graphs}{article}{
			author={Kumjian, Alex},
			author={Pask, David},
			title={$C^*$\nobreakdash -algebras of directed graphs and group actions},
			journal={Ergodic Theory Dynam. Systems},
			volume={19},
			date={1999},
			number={6},
			pages={1503--1519},
			issn={0143-3857},
			doi={10.1017/S0143385799151940},
			review={\MR {1738948}},
		}
		
		\bib{Kumjian-Pask-Raeburn:Cuntz-Krieger_graphs}{article}{
			author={Kumjian, Alex},
			author={Pask, David},
			author={Raeburn, Iain},
			title={Cuntz--Krieger algebras of directed graphs},
			journal={Pacific J. Math.},
			volume={184},
			date={1998},
			number={1},
			pages={161--174},
			issn={0030-8730},
			review={\MR {1626528}},
			doi={10.2140/pjm.1998.184.161},
		}
		
		\bib{Kumjian-Pask-Raeburn-Renault:Graphs}{article}{
			author={Kumjian, Alex},
			author={Pask, David},
			author={Raeburn, Iain},
			author={Renault, Jean},
			title={Graphs, groupoids, and Cuntz--Krieger algebras},
			journal={J. Funct. Anal.},
			volume={144},
			date={1997},
			number={2},
			pages={505--541},
			issn={0022-1236},
			review={\MR {1432596}},
			doi={10.1006/jfan.1996.3001},
		}
		
		\bib{Kwasniewski-Meyer:Aperiodicity}{article}{
			author={Kwa\'sniewski, Bartosz Kosma},
			author={Meyer, Ralf},
			title={Aperiodicity, topological freeness and pure outerness: from group actions to Fell bundles},
			journal={Studia Math.},
			issn={0039-3223},
			volume={241},
			number={3},
			date={2018},
			pages={257--303},
			doi={10.4064/sm8762-5-2017},
			review={\MR {3756105}},
		}
		
		\bib{Kwasniewski-Meyer:Essential}{article}{
			author={Kwa\'sniewski, Bartosz Kosma},
			author={Meyer, Ralf},
			title={Essential crossed products by inverse semigroup actions: Simplicity and pure infiniteness},
			journal={Doc. Math.},
			date={2021},
			volume={26},
			pages={271--335},
			doi={10.25537/dm.2021v26.271-335},
			review={\MR {4246403}},
		}
		
		\bib{Landstad:Duality}{article}{
			author={Landstad, Magnus~B},
			title={Duality theory for covariant systems},
			date={1979},
			journal={Transactions of the American Mathematical Society},
			volume={248},
			number={2},
			pages={223\ndash 267},
			review={\MR {522262}},
			doi={10.2307/1998969},
		}
		
		\bib{Leavitt:Module_type}{article}{
			author={Leavitt, William G.},
			title={The module type of a ring},
			journal={Trans. Amer. Math. Soc.},
			volume={103},
			date={1962},
			pages={113--130},
			issn={0002-9947},
			review={\MR {0132764}},
			doi={10.2307/1993743},
		}
		
		\bib{McClanahan:K-theory}{article}{
			author={McClanahan, Kevin},
			title={$K$\nobreakdash -theory and Ext-theory for rectangular unitary $C^*$\nobreakdash -algebras},
			journal={Rocky Mountain J. Math.},
			volume={23},
			date={1993},
			number={3},
			pages={1063--1080},
			issn={0035-7596},
			review={\MR {1245464}},
			doi={10.1216/rmjm/1181072541},
		}
		
		\bib{meakin-milan-wang-2021}{article}{
			author={Meakin, John},
			author={Milan, David},
			author={Wang, Zhengpan},
			title={On a class of inverse semigroups related to {L}eavitt path algebras},
			journal={Adv. Math.},
			fjournal={Advances in Mathematics},
			volume={384},
			year={2021},
			pages={Paper No. 107729, 37},
			issn={0001-8708},
			mrclass={20M18 (16S88)},
			mrnumber={4242903},
			doi={10.1016/j.aim.2021.107729},
			url={https://doi.org/10.1016/j.aim.2021.107729},
		}
		
		\bib{Ng:Discrete-Coactions}{article}{
			author={Ng, Chi-Keung},
			title={Discrete coactions on {$C^\ast $}-algebras},
			journal={J. Austral. Math. Soc. Ser. A},
			volume={60},
			year={1996},
			number={1},
			pages={118--127},
			issn={0263-6115},
			mrclass={46L55},
			review={\MR {1364557}},
		}
		
		\bib{Voiculescu-Dykema-Nica:Free}{book}{
			author={Nica, Alexandru},
			author={Dykema, Ken~J.},
			author={Voiculescu, Dan~V.},
			title={Free random variables},
			publisher={American Mathematical Soc.},
			date={1992},
			number={1},
			pages={vi+70},
			isbn={0-8218-6999-X},
			review={\MR {1217253}},
			doi={10.1090/crmm/001},
		}
		
		\bib{Nilsen:DualityCrossedProducts}{article}{
			author={Nilsen, May},
			title={Duality for full crossed products of $C^*$\nobreakdash -algebras by non-amenable groups},
			journal={Proc. Amer. Math. Soc.},
			volume={126},
			date={1998},
			number={10},
			pages={2969--2978},
			review={\MR {1469427}},
			doi={10.1090/S0002-9939-98-04598-5},
		}
		
		\bib{Nilsen:Full_crossed}{article}{
			author={Nilsen, May},
			title={Full crossed products by coactions, $C_0(X)$-algebras and \cstar {}bundles},
			date={1999},
			journal={Bulletin of the London Mathematical Society},
			volume={31},
			number={5},
			pages={556\ndash 568},
			issn={0024-6093},
			review={\MR {1703865}},
			doi={10.1112/S0024609399005883},
		}
		
		\bib{PS}{article}{
			author={Paschke, William L. and Salinas, Norberto},
			title={{$C\sp {\ast } $}-algebras associated with free products of groups},
			journal={Pacific J. Math.},
			fjournal={Pacific Journal of Mathematics},
			volume={82},
			year={1979},
			number={1},
			pages={211--221},
			issn={0030-8730},
			mrclass={22D25 (46L05)},
			review={\MR {549845}},
			mrreviewer={Jonathan M. Rosenberg},
			url={http://projecteuclid.org/euclid.pjm/1102785073},
		}
		
		\bib{Landstad-Philips-Raeburn:Representations_Coactions}{article}{
			author={Phillips, John},
			author={Raeburn, Iain},
			author={Sutherland, Colin~E},
			title={Representations of crossed products by coactions and principal bundles},
			journal={Trans. Amer. Math. Soc.},
			volume={299},
			year={1987},
			number={2},
			pages={747--784},
			issn={0002-9947},
			mrclass={46L55 (22D25 46M20)},
			review={\MR {869232}},
			doi={10.2307/2000523},
		}
		
		\bib{Quigg:FullAndReducedCoactions}{article}{
			author={Quigg, John C.},
			title={Full and reduced $C^*$\nobreakdash -coactions},
			journal={Math. Proc. Cambridge Philos. Soc.},
			volume={116},
			date={1994},
			number={3},
			pages={435--450},
			issn={0305-0041},
			review={\MR {1291751}},
			doi={10.1017/S0305004100072728},
		}
		
		\bib{Quigg:Discrete_coactions_and_bundles}{article}{
			author={Quigg, John C.},
			title={Discrete $C^*$\nobreakdash -coactions and $C^*$\nobreakdash -algebraic bundles},
			volume={60},
			number={2},
			journal={J. Austral. Math. Soc. Ser. A},
			issn={0263-6115},
			year={1996},
			pages={204--221},
			doi={10.1017/S1446788700037605},
			review={\MR {1375586}},
		}
		
		\bib{Raeburn:OnCrossedProductsByCoactions}{article}{
			author={Raeburn, Iain},
			title={On crossed products by coactions and their representation theory},
			journal={Proc. Lond. Math. Soc. (3)},
			volume={64},
			date={1992},
			number={3},
			pages={625--652},
			issn={0024-6115},
			doi={10.1112/plms/s3-64.3.625},
			review={\MR {1153000}},
		}

\bib{TW}{book}{
	AUTHOR = {Tomkowicz, Grzegorz},
	author={Wagon, Stan},
	TITLE = {The {B}anach-{T}arski paradox},
	SERIES = {Encyclopedia of Mathematics and its Applications},
	VOLUME = {163},
	EDITION = {Second Edition},
	NOTE = {With a foreword by Jan Mycielski},
	PUBLISHER = {Cambridge University Press, New York},
	YEAR = {2016},
	PAGES = {xviii+348},
	ISBN = {978-1-107-04259-9},
	MRCLASS = {03-02 (01A75 03E15 03E25 28-02 43A05)},
	MRNUMBER = {3616119},
	MRREVIEWER = {Klaas Pieter Hart},
}
		
		\bib{Voiculescu:Symmetries}{incollection}{
			author={Voiculescu, Dan},
			title={Symmetries of some reduced free product \cstar {}algebras},
			date={1985},
			booktitle={Operator algebras and their connections with topology and ergodic theory},
			series={Lecture Notes in Math.},
			publisher={Springer},
			pages={556\ndash 588},
			review={\MR {799593}},
			doi={10.1007/BFb0074909},
		}
	\end{biblist}
\end{bibdiv}

\end{document}